\newtheorem*{problem}{Problem}
\newcommand{\bb}{\pmb}
\newcommand{\dd}{\,\mathrm{d}}
\newcommand{\Id}{\operatorname{Id}}
\newcommand{\diam}{\operatorname{diam}}
\newcommand{\dist}{\operatorname{dist}}
\newcommand{\R}{{\mathbb R}}
\newcommand{\norm}[1]{{\left\lVert #1 \right\rVert}}
\newcommand{\abs}[1]{{\left\lvert #1 \right\rvert}}
\newcommand{\opd}{\,\operatorname{d}}
\renewcommand{\div}{\operatorname{div}}
\DeclareMathOperator{\curl}{{curl}}
\renewcommand{\S}{{\mathbb S}}
\renewcommand{\hat}{\widehat}
\DeclareMathOperator{\bcurl}{{\mathbf{curl}}}
\DeclareMathOperator{\grad}{grad}
\newcommand{\loc}{{\mathrm{loc}}}
\newcommand{\MSL}{{\bb{\mathscr  V}}}
\newcommand{\chispace}{ {\bb H^{-1/2}_\times(\div_\Gamma,\Gamma)}{}}
\newcommand{\Garding}{{G\r arding}}
\newcommand{\Dipole}{\mathbf{DP}}
\crefname{hypothesis}{Hypothesis}{Hypotheses}
\title{Isogeometric Boundary Elements in Electromagnetism: Rigorous Analysis, Fast Methods, and Examples
\thanks{Submitted to arXiv\today.
}}
\author{J\"urgen D\"olz\thanks{Technische Universit\"at Darmstadt,
Institute for Accelerator Science and Electromagnetic Fields, Centre for Computational Engineering, \email{[doelz/kurz/schoeps/wolf]@gsc.tu-darmstadt.de}.}
\and Stefan Kurz\footnotemark[2]
\and Sebastian Schöps\footnotemark[2]
\and Felix Wolf\footnotemark[2]
\thanks{Corresponding author.}
}
\begin{document}

\maketitle

\begin{abstract}
	We analyze a new approach to three-dimensional electromagnetic scattering problems via fast
	isogeometric boundary element methods. Starting with an investigation of
	the theoretical setting around the electric field integral equation within the
	isogeometric framework, we show existence, uniqueness, and quasi-optimality
	of the isogeometric approach. For a fast and efficient computation, 
	we then introduce and analyze an interpolation-based fast multipole method
	tailored to the isogeometric setting, which admits competitive algorithmic
	and complexity properties. This is followed by a
	series of numerical examples of industrial
	scope, together with a detailed presentation and interpretation of the 
	results.
\end{abstract}

\begin{keywords}
BEM, FMM, IGA, Electromagnetic Scattering, EFIE
\end{keywords}

\begin{AMS}
  	65D07, 
  	65N38, 
  	65Y20  
\end{AMS}

\section{Introduction}

As has been shown since their introduction by \cite{Hughes_2005aa} in 2005, isogeometric methods offer a variety of advantages over triangulation-based approaches. Not only do they allow for an exact geometry representation via parametric mappings, but also improved spectral properties and higher accuracies per degree of freedom (DOF) have been shown in~\cite{Cottrell_2009aa}.

The interest in isogeometric boundary element methods is rooted in the need for volumetric mappings by classical isogeometric analysis (IGA). These are usually not provided by computer-aided design (CAD) frameworks since most CAD systems handle only boundary representations. 
Boundary element methods, which already exploited the possibility of exact geometry representations via parametric mappings before the introduction of IGA \cite{Harbrecht_2001aa}, avoid this problem by reducing the problem at hand to an integral equation operating exclusively on the boundary of the domain of interest. 
Moreover, since merely a boundary representation must be available, this makes boundary element methods exceptionally well suited for exterior problems, specifically scattering problems.
The adaptation of this to the isogeometric framework has spawned a whole area of research, see, e.g, \cite{Aimi_18aa,Dolz_2016aa,Dolz_2018aa,Feischl_2017aa,Harbrecht_2013ab,Marussig_2015aa, Simpson_2012aa,Takahashi_2012aa}, and the
references therein, {as well as alternative approaches aiming to omit discretization errors, e.g.~\cite{Lie_2016_aa}.}

Although the independence of volumetric mappings is one major advantage of isogeometric boundary element methods, there are obstacles to be aware of. First, they rely on the existence of fundamental solutions, thus not all PDEs are solvable by a boundary element approach. 
Second, due to the non-local formulation, the arising systems are dense, and thus boundary element methods
rely on so-called \emph{fast methods} to be used efficiently, see eg.~\cite{Beb00, greengard1987fast, HN89, Hackbusch_2002aa,Harbrecht_2001aa,Harbrecht_2013ab}.

While solid implementations and analysis for acoustic scattering exist, cf., e.g., \cite{Dolz_2018aa}, the efficient solution of the more involved electromagnetic problems is still an open area of research, but of vast interest for the electromagnetic engineering community \cite{Bontinck_2017ag}.
Implementations of (lowest-order) boundary element schemes for electromagnetic scatterings, mostly based on the \emph{electric field integral equation (EFIE)}, are widely adopted in industry, and classical implementations are well understood, cf.~\cite{Peterson_2006aa}. 
Implementations realizing the isogeometric approach as first presented by \cite{Buffa_2014aa} exist as well, cf.~\cite{Simpson_2018aa}.
However, many open questions about the theory and behavior of this approach remain open, and the potential of fast methods is far from being fully exploited.

\tikzstyle{decision} = [diamond, draw,  left color=blue!20, right color=green!20, text width=4.5em, text badly centered, node distance=3cm, inner sep=0pt]
\tikzstyle{solver} = [rectangle, draw, fill=blue!20, 
    text width=7em, text centered, rounded corners, minimum height=4em]
    \tikzstyle{block} = [rectangle, draw, fill=green!20, 
    text width=7em, text centered, rounded corners, minimum height=4em]
\tikzstyle{line} = [draw, -latex']
\tikzstyle{solcloud} = [draw, ellipse,text centered,fill=blue!20,text width=7em,minimum height=4em]
\tikzstyle{ancloud} = [draw, ellipse,text centered,fill=green!20,text width=7em,minimum height=4em]

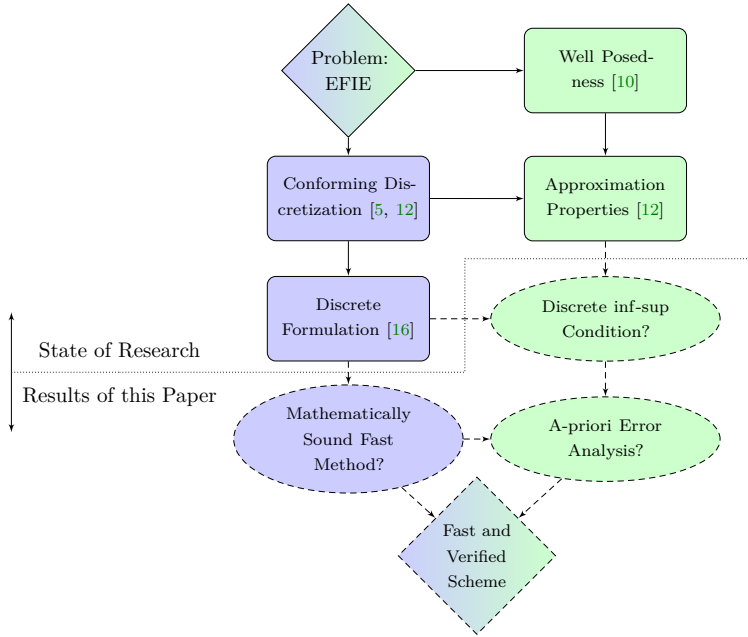
\begin{figure}
	\centering
\adjustbox{scale=.8,center}{%
    \begin{tikzpicture}[node distance = 2cm,auto]
    \node [decision] (model) {\small Problem: EFIE};
    \node [solver,below=.3cm of model] (disc) {\footnotesize Conforming Discretization~\cite{Beirao-da-Veiga_2014aa,Buffa_2018aa}};
    \node [solver,below of=disc] (form) {\footnotesize Discrete Formulation~\cite{Buffa_2014aa}};
    \node [solcloud,below of=form,densely dashed] (fastnum) {\footnotesize Mathematically Sound Fast Method?};

    \node [block, right=1.8cm of model] (contdef) {\footnotesize Well Posedness~\cite{Buffa_2003aa}};
    \node [block,below=.71cm of contdef] (approx) {\footnotesize Approximation Properties~\cite{Buffa_2018aa}};

    \node [ancloud,below of=approx,densely dashed] (infsup) {\footnotesize Discrete $\inf$-$\sup$ Condition?};
    \node [ancloud,below of=infsup,densely dashed] (error) {\footnotesize A-priori Error Analysis?};
    \node (ghost) at ($0.5*(error)+0.5*(fastnum)$) {\footnotesize };
    \node [decision,below=.5cm of ghost,densely dashed] (goal) {\footnotesize Fast and Verified Scheme};
    \path [line] (model) -- (contdef);
    \path [line] (disc) -- (approx);
    \path [line,densely dashed] (form) -- (infsup);
    \path [line,densely dashed] (fastnum) -- (error);
    \path [line] (model) -- (disc);
    \path [line] (disc) -- (form);
    \path [line,densely dashed] (form) -- (fastnum);
    \path [line] (contdef) -- (approx);
    \path [line,densely dashed] (approx) -- (infsup);
    \path [line,densely dashed] (infsup) -- (error);
    \path [line,densely dashed] (error) -- (goal); 
    \path [line,densely dashed] (fastnum) -- (goal);

    \node (lineleft) at ($.5*(approx)+.5*(infsup)+(2.5,0)$){};
    \node[label=above:{State of Research}] (lineright) at ($.5*(fastnum)+.5*(form)-(3.8,-.11)$) {};
    \node[label=below:{Results of this Paper}] at (lineright) {};
    \node (linemidbot) at ($(lineleft -| ghost)+(-.2,0)$) {};
    \node (linemidtop) at ($(lineright -| ghost)+(-.2,0)$) {};

    \path [draw,densely dotted] (lineleft.center) -- (linemidbot.center) -- (linemidtop.center) -- (lineright.center);
    \node (leftbound) at ($(lineright)+(-1.8,0)$){};
    \path [line] (leftbound.center) -- ($(leftbound)+(0,1)$);
    \path [line] (leftbound.center) -- ($(leftbound)+(0,-1)$);
    \path [draw,densely dotted] (leftbound.center) -- (lineright.center);
\end{tikzpicture}}
\caption{A diagram showcasing the intention of this paper. Blue represents crucial algorithmic advances, and green the corresponding analysis. It remarks on the state of research (blocks) of isogeometric formulations of the electric field integral equation (EFIE), as well as open questions, which have not yet fully been discussed in the literature (circled). We aim to discuss these within this paper and present a fast scheme which is verified by theory.
Note that for classical approaches to the electric field integral equation most of these issues have been resolved already.}\label{fig::purpose}
\end{figure}
The contribution of this paper is threefold, cf.~Figure~\ref{fig::purpose}.
First, existence and uniqueness of discrete solutions for the isogeometric
approach are unclear. Classical proofs for other discretizations rely on commuting interpolation operators, see \cite{Bespalov_2010aa,Buffa_2002aa,Buffa_2003ab},
which have been obtained only recently for the isogoemetric approach in a
multi-patch setting
\cite{Buffa_2018aa}. In view of these developments, we establishing a discrete $\inf$-$\sup$-condition
which yields existence and uniqueness for isogeometric discretizations of
the electric field integral equation. Together with recent approximation
results \cite{Buffa_2018aa}, this guarantees optimal convergence rates.

The second point this paper is concerned with is the need for an efficient fast method for this framework. The issue with usual methods, see, e.g., \cite{Beb00, greengard1987fast, HN89, Hackbusch_2002aa},
is the fact that they were designed for lower order trial spaces and iterate over the degrees of freedom -- rather than the elements --
during numerical quadrature. Applying this approach to higher order spline spaces as done in \cite{Simpson_2018aa}
results in difficult bookkeeping of the supports and expensive numerical quadrature,
since redundant evaluations of geometry and fundamental solution are inevitable.

In this paper, we follow the approach of \cite{Dolz_2016aa, WegglerMatrix}, which allows
for a fast method with element-wise quadrature and fully avoids redundant evaluations of
geometry and fundamental solution.
Our method exploits the isogeometric structure and yields a simplified
implementation based on interpolation on the unit square.
Moreover, our approach fits effortlessly into the
$\mathcal{H}^2$-matrix framework \cite{Borm_2010aa}, which is a more efficient
specialization of the frequently used $\mathcal{H}$-matrix framework \cite{Hac15}.
However, one should be aware that  approximations of the system by fast methods directly influences the quality
of the solution. Thus, induced errors needs to be well understood and controlled. 
We provide a detailed
analysis of the presented method which shows that it provably
maintains optimal convergence rates. To best of our
knowledge, this makes our method the only fast method for
isogeometric boundary elements in the case of the EFIE
which is mathematically sound and computationally efficient.

This document is structured as follows.
Section \ref{sec::fundamentalconcepts} reviews the basic notions required for an analysis of the electric field integral equation and its discretization within the isogeometric framework, which will be formulated in Section \ref{sec::discrete}. This is followed by a discussion of existence, uniqueness, and quasi-optimality of the solution to the arising variational problem.
Afterward, in Section \ref{sec::implement}, we introduce and analyze an interpolation-based fast multipole approach. 
Section \ref{sec::num} then introduces numerical experiments of different sizes, in which we investigate the behavior of our method, with both the surface current and the scattered field as a quantity of interest in mind. 
Finally, in Section \ref{sec::conclusion}, we conclude and reflect our results.

Throughout this paper, in order to avoid the repeated use of generic but unspecified constants, by $C \lesssim D$ we mean that $C$ can be bounded by a multiple of $D$, independently of parameters which $C$ and $D$ may depend on. In the usual sense, $C \gtrsim D$ is defined as $D \lesssim C$, and $C\simeq D$ as $C \lesssim D \lesssim C$.

\FloatBarrier
\section{Fundamental Concepts}\label{sec::fundamentalconcepts}
We first introduce the required mathematical framework for a precise introduction of the scattering problem for which we follow the leads of \cite{Buffa_2003ab}. 
Afterward, we will introduce the electric field integral equation and review some of its properties.

\subsection{The Electromagnetic Scattering Problem}

On the bounded domain $\Omega\subset\mathbb{R}^3$ and for $0\leq s$, we denote by $H^s(\Omega)$ the usual Sobolev spaces \cite{McLean_2000aa}, and by $\bb H^s(\Omega)$ their vector valued counterparts. For $s=0$ we utilize the convention $H (\Omega) = H^0 (\Omega) = L^2(\Omega)$ and $\bb H (\Omega) = \bb H^0 (\Omega) = \bb L^2(\Omega)$. On unbounded domains $\Omega^c:=\mathbb{R}^3\setminus\overline{\Omega}$ we utilize the same notation together with the subscript ``$\loc$'' in the form of $H^s_\loc(\Omega^c)$ and $\bb H^s_\loc(\Omega^c)$ to denote that the required regularity conditions must only be fulfilled on all bounded subdomains of $\Omega^c$.

For compact manifolds $\Gamma$ we denote by $H^s(\Gamma)$ the usual construction of Sobolev spaces on manifolds via charts, and by $\bb H^s(\Gamma)$ their vector valued counterparts. As usual, we define the spaces $H^{-s}(\Gamma)$ and $\bb H^{-s}(\Gamma)$ as the dual spaces of $H^s(\Gamma)$ and $\bb H^s(\Gamma)$ w.r.t.~$L^2(\Gamma)$ and $\bb L^2(\Gamma)$ as pivot spaces.

Let $\mathcal{M}$ be one of the domains $\Omega$, $\Omega^c$, or the boundary $\Gamma$. For any differential operators $\operatorname{d}$ defined on $\mathcal{M}$, we define the spaces $H^s(\operatorname{d},\mathcal{M})$ via the closure of $H^s(\mathcal{M})$ under the graph norm $\norm{\cdot}_{H^s(\mathcal{M})}+\norm{\operatorname d(\cdot)}_{H^s(\mathcal{M})}$, equipping the spaces with the same. The definition of graph norms is generalised to vector-valued differential operators and spaces in complete analogy {and we denote by $\bb H^s(\div 0,\mathcal{M})$ the elements of $\bb H^s(\mathcal{M})$ with zero divergence.}
The following trace operator for vector fields onto Lipschitz boundaries will be required to describe meaningful boundary data to the electric wave equation.
\begin{definition}[Rotated Tangential Trace Operators, \cite{Buffa_2003ab}]
For $\bb u\in C(\Omega^c; \mathbb C^3)$, with $\Omega$ being a domain with Lipschitz boundary, we define the \emph{exterior rotated tangential trace operator} as
\begin{align*}
        \bb \gamma_{ t}^+ (\bb u)(\bb x_0) & \coloneqq \lim_{\substack{\bb x\to \bb x_0\\\bb x\in\Omega^c}}\bb u(\bb x) \times \bb{n}_{\bb x_0},\quad\text{for all}~\bb x_0\in\Gamma,
\end{align*}
where $\bb{n}_{\bb x_0}$ denotes the exterior normal vector of $\Omega$ at $\bb x_0$. {The interior trace $\bb \gamma_{t}^-$ is defined accordingly, using the exterior normal.}
\end{definition}
By density arguments, see also \cite{Buffa_2003ab}, this notation can be extended to be applicable to the spaces $\bb H^{s+1/2}_{\loc}(\Omega^c)$ for $0<s<1$ and $\bb H_\loc(\bcurl,\Omega^c)$. Thus, we define  $\bb H^s_\times(\Gamma)\coloneqq \bb \gamma_t^+\big(\bb H^{s+1/2}_{\loc}(\Omega^c)\big)$ for all $0<s<1$ as well as
\begin{align*}
	\chispace \coloneqq \bb \gamma_t^+\big(\bb H_\loc(\bcurl,\Omega^c)\big).
\end{align*}
It is known that $\bb \gamma_t^+ \colon\bb H_\loc(\bcurl,\Omega^c)\to\chispace$ is a bounded linear operator \cite{Buffa_2003ab}.

With respect to the pairing
\[
\langle \bb\mu,\bb\nu\rangle_{\times} = \int_\Gamma (\bb\mu\times\bb n_{\bb x}) \cdot \bb\nu \opd \sigma_{\bb x},
\]
we define the spaces $\bb H^{-s}_\times(\Gamma)$ by duality to $\bb H^s_\times(\Gamma)$ for $0<s<1$.
Note, however, that the space $\chispace$ cannot be defined via such a duality if $\Gamma$ is non-smooth, cf.~\cite{Buffa_2003ab}.

Given a perfectly conducting object $\Omega$ with Lipschitz boundary $\Gamma$ in a surrounding $\Omega^c$, we are interested in the scattered field $\bb e_s$ of an electric incident wave $\bb e_i$ hitting the scatterer $\Omega$. Assuming a time-harmonic problem, the scattered field $\bb e_s$ can then be described in the frequency domain by the \emph{electric wave equation}
\begin{align}\label{problem::ext_scattering}\left\lbrace\qquad
\begin{aligned}
	\bcurl \bcurl\, \bb e_s -\kappa^2\bb e_s &= 0&&\text{in}~\Omega^c,\\
	\bb\gamma_t^+\bb e_s &= -\bb\gamma_t^+\bb e_i&&\text{on}~\Gamma,\\
	\big|\bcurl\,\bb e_s(\bb x)\times{\bb x}\cdot{|\bb x|}^{-1}-i\omega\varepsilon _0\bb e_s(\bb x)\big|&=\mathcal{O}(|\bb x|^{-2}),&&|\bb x|\to\infty.
\end{aligned}\right.
\end{align}
The wavenumber $\kappa=\omega\sqrt{\varepsilon_0\mu_0}$ is described in terms of the frequency $\omega$, as well as the material parameters \emph{permittivity} $\varepsilon_0>0$ and \emph{permeability} $\mu_0>0$, which we assume to be constant.
It is known that \eqref{problem::ext_scattering} is uniquely solvable for any sufficiently regular Dirichlet data and wavenumbers $\kappa >0$, see \cite{Buffa_2004aa}.
Given an incident wave $\bb e_i$, the total electric field $\bb e$ in $\Omega^c$ is then given by $\bb e = \bb e_i+\bb e_s$.

\subsection{The Electric Field Integral Equation}
Since \eqref{problem::ext_scattering} is an unbounded exterior problem in a homogeneous medium, it is convenient to use the following boundary integral representation.
\begin{lemma}[Representation Formula, \cite{Buffa_2003ab}]
	For any solution $\bb e_s$ of \eqref{problem::ext_scattering} there exists a density $\bb w\in \chispace$ such that $\bb e_s (\bb x) = (\tilde{\MSL}_\kappa\bb w)(\bb x)$ for all $\bb x\in \Omega^c$, where
	\begin{align}\label{eq:ESLP}
		(\tilde{\MSL}_\kappa\bb w)(\bb x)
		=
		\int _\Gamma G_\kappa(\bb x-\bb y)\bb w(\bb y)\dd\sigma_{\bb y}
		+
		\frac{1}{\kappa^2}
		\bb\grad_{\bb x}
		\int _\Gamma G_\kappa(\bb x-\bb y)\div_{\Gamma}\bb w(\bb y)\dd\sigma_{\bb y}.
	\end{align}
	$G_\kappa(\bb x-\bb y)$ is herein given by the
	Helmholtz fundamental solution
	\begin{align}
	\label{eq:HelmholtzGreen}
		G_\kappa(\bb x-\bb y)=\frac{e^{i\kappa\|\bb x-\bb y\|}}{4\pi\|\bb x-\bb y\|}.
	\end{align}
	Moreover, the \emph{electric single layer potential} given in \eqref{eq:ESLP} is a continuous operator
		$\tilde{\MSL}_{\kappa}\colon \bb H^{-1/2}_\times(\div_\Gamma,\Gamma)\to\bb H_\loc(\bcurl\,\bcurl,\Omega^c),$
	such that the image of $\tilde{\MSL}_\kappa$ is divergence free within $\Omega^c$.
\end{lemma}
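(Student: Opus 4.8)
The plan is to separate the three assertions of the lemma -- the mapping and regularity property of $\tilde{\MSL}_\kappa$, the divergence-free property of its range, and the existence of a representing density -- and to treat them from the easiest to the most delicate. The structural backbone throughout is the \emph{Lorenz-gauge} splitting $\tilde{\MSL}_\kappa\bb w = \bb A + \kappa^{-2}\grad\phi$, where $\bb A(\bb x) = \int_\Gamma G_\kappa(\bb x-\bb y)\bb w(\bb y)\dd\sigma_{\bb y}$ is the vectorial single layer and $\phi(\bb x) = \int_\Gamma G_\kappa(\bb x-\bb y)\div_\Gamma\bb w(\bb y)\dd\sigma_{\bb y}$ the scalar single layer of the surface charge $\div_\Gamma\bb w$, together with the defining property $-\Delta G_\kappa - \kappa^2 G_\kappa = \delta$ of the Helmholtz kernel \eqref{eq:HelmholtzGreen}.

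First I would establish the divergence-free property by direct computation away from $\Gamma$. Since $\grad_{\bb x}G_\kappa(\bb x-\bb y) = -\grad_{\bb y}G_\kappa(\bb x-\bb y)$ and $\bb w$ is tangential, surface integration by parts yields the continuity relation $\div\bb A = \phi$, while $\Delta G_\kappa = -\kappa^2 G_\kappa$ off the singularity gives $\Delta\phi = -\kappa^2\phi$. Hence $\div\tilde{\MSL}_\kappa\bb w = \div\bb A + \kappa^{-2}\Delta\phi = \phi - \phi = 0$ on $\Omega^c$. The same two identities, inserted into $\bcurl\bcurl = \grad\div - \Delta$, show that $\bcurl\bcurl\,\tilde{\MSL}_\kappa\bb w = -\Delta\tilde{\MSL}_\kappa\bb w = \kappa^2\tilde{\MSL}_\kappa\bb w$, so $\tilde{\MSL}_\kappa\bb w$ solves the homogeneous electric wave equation. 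Consequently $\bcurl\bcurl\,\tilde{\MSL}_\kappa\bb w$ is exactly as regular as $\tilde{\MSL}_\kappa\bb w$ itself, and boundedness $\chispace\to\bb H_\loc(\bcurl\,\bcurl,\Omega^c)$ reduces to the mapping properties of the scalar single layer $\div_\Gamma\bb w\mapsto\phi$ on $H^{-1/2}(\Gamma)\to H^1_\loc(\Omega^c)$ and of the vectorial single layer, both standard once $\bb w\in\chispace$ guarantees $\div_\Gamma\bb w\in H^{-1/2}(\Gamma)$.

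For the existence of the density $\bb w$ I would invoke the Stratton--Chu representation via an extension argument. Given the radiating solution $\bb e_s$, I would extend it into $\Omega$ by the solution $\bb e_s^-$ of an interior curl-curl problem chosen so that the tangential traces match, $\bb\gamma_t^-\bb e_s^- = \bb\gamma_t^+\bb e_s$. The globally defined field then has continuous tangential trace across $\Gamma$, so that applying $\bcurl\bcurl - \kappa^2$ in the distributional sense produces only a \emph{single}-layer source supported on $\Gamma$, namely the jump $\bb w \coloneqq \bb\gamma_t^+(\bcurl\bb e_s) - \bb\gamma_t^-(\bcurl\bb e_s^-)$ of the magnetic trace; the double-layer contribution drops out precisely because the Dirichlet trace has no jump. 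Convolving this surface source against the dyadic Green's function reproduces $\tilde{\MSL}_\kappa\bb w$, and since both the global field and $\tilde{\MSL}_\kappa\bb w$ are radiating solutions of $(\bcurl\bcurl-\kappa^2)(\cdot)$ with identical source, their difference solves the homogeneous radiating problem on all of $\mathbb{R}^3$ and hence vanishes; restricting to $\Omega^c$ gives $\bb e_s = \tilde{\MSL}_\kappa\bb w$. Finally, the trace theorem for $\bb H_\loc(\bcurl,\Omega^c)$ places the magnetic trace, and therefore $\bb w$, in $\chispace$.

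The main obstacle is not any single computation but the functional-analytic bookkeeping on the nonstandard trace spaces: one must verify that the interior extension $\bb e_s^-$ exists (avoiding interior resonances, e.g.\ through a suitable variational formulation), that the distributional jump $\bb w$ is genuinely an element of $\chispace$ rather than merely a boundary distribution, and that the pairing $\langle\cdot,\cdot\rangle_\times$ together with the surface integration by parts used above remain valid on the merely Lipschitz boundary $\Gamma$, where -- as the excerpt already warns -- $\chispace$ cannot be obtained by naive duality. These are exactly the points developed in \cite{Buffa_2003ab}, on which I would rely to turn the formal manipulations above into a rigorous argument.
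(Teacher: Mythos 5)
The paper does not prove this lemma at all: it is imported verbatim from Buffa--Hiptmair \cite{Buffa_2003ab}, and your sketch reproduces exactly the standard argument given there -- the Lorenz-gauge splitting with the continuity relation $\div\bb A=\phi$, the identity $\bcurl\bcurl=\grad\div-\Delta$ combined with the Helmholtz equation for each potential, and the Stratton--Chu/interior-extension device that kills the double-layer term so that only the jump of the magnetic trace survives as the density. Your closing caveats (interior resonances for the extension, and the fact that $\chispace$ is not a naive dual on Lipschitz boundaries) are precisely the points the paper itself flags in the surrounding remarks, so the proposal is correct and consistent with the cited source.
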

We remark that the very same representation formula holds also for the electric wave equation in the \emph{bounded} domain $\Omega$, which we shall not need here. However, for our following considerations it is important to keep in mind that the interior and the exterior problem are closely related to each other. More precisely, the following considerations for the \emph{exterior} problem fail, if $\kappa$ is a resonant wavenumber of the \emph{interior} problem, see \cite{Buffa_2003ab} for
a precise definition and discussion.

By the Lemma above we know that a density $\bb w$, {which has a physical meaning in terms of a surface current,} with
$
\bb e_s=\tilde{\MSL}_{\kappa}\bb w,
$
exists.
To obtain it, we apply the tangential trace on both sides of \eqref{eq:ESLP}, which yields the \emph{electric field integral equation}
\begin{align}\label{eq:EFIE}
	-\bb\gamma_t^+\bb e_i=(\bb\gamma_t^+\tilde{\MSL}_\kappa)(\bb w)=:\MSL_\kappa\bb w.
\end{align}

The variational formulation for the electric field integral equation \eqref{eq:EFIE} is as follows.
\begin{problem}[Continuous Problem]
	Find $\bb w\in \chispace$ such that
	\begin{align}
		\langle \MSL_{\kappa}\bb w,\bb\xi \rangle_\times = -\langle \bb\gamma_t^+\bb e_i,\bb\xi \rangle_\times, \label{problem::variational::cont} 
	\end{align}
	for all $\bb\xi \in \chispace$.
\end{problem}
As done in \cite{Buffa_2003aa}, one can utilize a generalized \Garding{}-inqequality to show well posedness of this continuous problem for non resonant wavenumbers. 

\section{The Discrete Problem}\label{sec::discrete}

We will now discuss the technical details and analytic properties for the discretization of \eqref{problem::variational::cont}.
Since much of the following analysis is based on the approximation results of \cite{Buffa_2018aa}, we will follow its notation closely. For a more in-depth introduction to spline theory we refer to \cite{Schumaker_2007aa}, and to its application to variational isogeometric analysis to \cite{Beirao-da-Veiga_2014aa}.

\subsection{Fundamental Notions}\label{sec::subsec::iga}
We review the basic notions of isogeometric analysis, restricting ourselves to spaces constructed via locally quasi uniform $p$-open knot vectors as required by the theory presented in \cite{Beirao-da-Veiga_2014aa,Buffa_2018aa}.

\begin{definition}[B-Splines, \cite{Beirao-da-Veiga_2014aa}]\label{def::splines}
Fix $p$ and $k$ such that $0\leq p< k$. 
A \emph{locally quasi uniform $p$-open knot vector} is given by a set
\begin{align*}
\Xi = \big[{\xi_0 = \cdots =\xi_{p}}\leq \cdots \leq{\xi_{k}=\cdots =\xi_{k+p}}\big]\in[0,1]^{k+p+1}
\end{align*}
with $\xi_0 = 0$ and $\xi_{k+p}=1$
such that there exists a constant $\theta\geq 1$ such that for all $p\leq j < k$  one finds $\theta^{-1}\leq h_j\cdot h_{j+1}^{-1} \leq \theta,$ where  $h_j\coloneqq  \xi_{j+1}-\xi_{j}$ for all $\xi_j,\xi_{j+1}\in \Xi.$ 
The B-spline basis $ \lbrace b_j^p \rbrace_{0\leq j< k}$ is now defined by recursion as
\begin{align*}
b_j^p(x) & =\begin{cases}
\chi_{[\xi_j,\xi_{j+1})}&\text{ if }p=0,\\[8pt]
\frac{x-\xi_j}{\xi_{j+p}-\xi_j}b_j^{p-1}(x) +\frac{\xi_{j+p+1}-x}{\xi_{j+p+1}-\xi_{j+1}}b_{j+1}^{p-1}(x) & \text{ else,}
\end{cases}
\end{align*}
where $\chi_M$ denotes the indicator function for a set $M$. Moreover, we define the spline space  $S^p(\Xi)\coloneqq\operatorname{span}(\lbrace b_j^p\rbrace_{j <k}).$
\end{definition}

To obtain spline spaces in two spacial dimensions, define, for a tuple $\bb\Xi =(\Xi_1,$ $\Xi_2)$ and polynomial degrees $\bb p=(p_1,p_2)$ the spaces $S^{\bb p}(\bb \Xi)\coloneqq S^{p_1}(\Xi_1)\otimes S^{p_2}(\Xi_2)$. {For simplicity, we will assume {all interior knots} to have the same multiplicity.}
Given knot vectors $\Xi_1,$ $\Xi_2$ with knots {$\xi_{i}^k < \xi_{i+1}^k$ and $\xi_{i}^k,\xi^k_{i+1}\in \Xi_k$ for both, $k=1,2$, sets of the form $[\xi_{j}^1,\xi^1_{j+1}]\times[\xi^2_{j},\xi^2_{j+1}]$} will be called \emph{elements}. We reserve the letter $h$ for the maximal diameter of all elements.
{
We remark that this tensor product construction does not allow for local refinement. Approaches to omit this problem have been suggested, see e.g.~\cite{Evans_2014aa} and the sources cited therein, but are beyond the scope of this article.
}

Let $\square\coloneqq [0,1]^2$ denote the unit square. As usual in the framework of isogeometric analysis, the geometry $\Gamma=\bigcup_{j\leq N}\Gamma_j$ will be given as a family of mappings 
\begin{align}
\bb F_j\colon  \square \to \Gamma_j \subset \R^3,\label{def::geom}
\end{align}
which we will refer to as \emph{parametrization}.
These mappings will be given by NURBS mappings, 
i.e.,~by mappings with a representation
\begin{align*}
\bb F_j(x,y)\coloneqq \sum_{0\leq j_1<k_1}\sum_{0\leq j_2<k_2}\frac{\bb c_{j_1,j_2} b_{j_1}^{p_1}(x) b_{j_2}^{p_2}(y) w_{j_1,j_2}}{ \sum_{i_1=0}^{k_1-1}\sum_{i_2=0}^{k_2-1} b_{i_1}^{p_1}(x) b_{i_2}^{p_2}(y) w_{i_1,i_2}},
\end{align*}
for control points $\bb c_{j_1,j_2}\in \R^3$ and weights $w_{i_1,i_2}>0.$ For further concepts and algorithmic realization of the NURBS we refer to \cite{Piegl_1997aa}.

{We assume $\Gamma = \bigcup_{j\leq N} \Gamma_j$ to be {the} piecewise smooth boundary of a simply connected Lipschitz domain.} Moreover, we assume any mapping of the parametrization to be non-singular and invertible.
On any interface $\Gamma_j\cap \Gamma_i \neq \emptyset$ we require the involved mappings to coincide, i.e.,~$\bb F_j(\cdot,1) \equiv \bb F_i(\cdot,0)$ must be satisfied up to orientation of the reference domain. 

We remark that, as long as the assumptions stated above are fulfilled, the description of the geometry is independent of the analysis that will follow, i.e., one could choose sufficiently regular mappings that are not representable via NURBS, for example, mappings containing trigonometric functions.

\subsection{A Conforming Discretization}\label{sec::subsec::conforming}

Let $\bb F_j\colon \square\to \Gamma_j$ be a mapping of the parametrization of $\Gamma$.
Defining the \emph{surface measure} $\tau$ via
\begin{align}
\tau(\bb x)\coloneqq \norm{\partial_{x}\bb F_j( {\bb x})\times \partial_{y}\bb F_j( {\bb x})}_{\mathbb{R}^3},\quad \bb x\in \square,
\end{align}
the geometry transformations required for an analysis of isogeometric boundary element methods are of the form
\begin{align*}
\iota_0(\bb F_j)(f_0)(\bb x) \coloneqq {}&{}(f_0\circ\bb F_j)(\bb x),&&\bb x\in\square,\\
\iota_1(\bb F_j)(\bb f_1)(\bb x) \coloneqq {}&{}\big(\tau \cdot (d\bb F_j^\intercal)^{-1} (\bb f_1\circ \bb F_j)\big)(\bb x),&&\bb x\in\square,\\
\iota_2(\bb F_j)(f_2)(\bb x) \coloneqq {}&{}(\tau \cdot (f_2\circ \bb F_j))(\bb x),&&\bb x\in\square.
\intertext{
{We note that, since $d\bb F_j^\intercal$ is a rectangular matrix, $(d\bb F_j^\intercal)^{-1}$ is a common abuse of notation, whose meaning is discussed for example in \cite[Chapter~5.4]{Peterson_2006aa}. In short, under mild assumptions on the geometry mapping, $\iota_1$ has to be understood in the sense of mapping a tangential vector field on a two-dimensional manifold embedded into $\mathbb R^3$ to the tangential field of the two-dimensional reference domain.}
For implementations, this technicality can usually be omitted, since the operations on the reference domain merely require the push-forwards}
    (\iota_0(\bb F_j))^{-1}(f_0)(\bb x)={}&{} (f_0\circ \bb F_j^{-1})(\bb x),&&\bb x\in\Gamma_j,\\
    (\iota_1(\bb F_j))^{-1}(\bb f_1)(\bb x)={}&{} \left(\tau^{-1} \cdot (d\bb F_j)^\intercal( \bb f_1\circ \bb F_j^{-1})\right)(\bb x),&&\bb x\in\Gamma_j,\\
    (\iota_2(\bb F_j))^{-1}(f_2)(\bb x)={}&{} \left(\tau^{-1} \cdot (f_2\circ \bb F_j^{-1})\right)(\bb x),&&\bb x\in\Gamma_j,
\end{align*}
where the computation of the inverse $\bb F_j^{-1}$ is not required, since all discrete entities are known and constructed w.r.t.~the reference coordinates.

An important property of these geometry transformations is that the following diagram
$$ \begin{tikzcd}[row sep = 3em,column sep = 1.3cm]
H^1(\Gamma_j)\ar{d}[description]{\iota_0}\ar{r}[description]{\bcurl}& \bb H(\div_\Gamma,\Gamma_j)\ar{r}[description]{\div}\ar{d}[description]{\iota_1}&  L^2(\Gamma_j)\ar{d}[description]{\iota_2}\\
H^{1}(\square)\ar{r}[description]{{\bcurl_\Gamma}} & \bb H(\div,\square)\ar{r}[description]{\div_\Gamma} &  L^{{2}}(\square)
\end{tikzcd}$$
commutes \cite{Buffa_2018aa,Peterson_1995aa}. Thus, a conforming spline basis of $H^1(\square)$ yields automatically conforming finite dimensional discretization of the entire diagram. More precisely, given polynomial degrees $p_1,p_2>0$, the mapping properties of the differential operators yield the conforming spline spaces
\begin{align*}
\S^0_{\bb p,\bb\Xi}(\square)\coloneqq {}&{} S^{p_1,p_2}(\Xi_1,\Xi_2),&&\subset H^1(\square),\\
\bb \S^1_{\bb p,\bb\Xi}(\square)\coloneqq {}&{}  S^{p_1,p_2-1}(\Xi_1,\Xi_2') \times  S^{p_1-1,p_2}(\Xi_1',\Xi_2),&&\subset \bb H^0(\div,\square),\\
\S^2_{\bb p,\bb\Xi}(\square)\coloneqq {}&{} S^{p_1-1,p_2-1}(\Xi_1',\Xi_2')&&\subset L^2(\square),
\end{align*}
together with their mapped counterparts on the surface.

For the multipatch boundary $\Gamma = \cup_{j\leq N} \Gamma_j$ let $\bb \Xi \coloneqq (\bb \Xi_j)_{j\leq N}$ be an $N$-tuple of knot vectors as in Definition \ref{def::splines}. Let $\bb p=(\bb p_j)_{j\leq N}$ an $N$-tuple of pairs of integers $\bb p_j=\big(p_1^{(j)},p_2^{(j)}\big)$, corresponding to polynomial degrees for each patch $\Gamma _j$. Then we define the \emph{spline complex} on the boundary $\Gamma$ via
\begin{align*}
\S^0_{\bb p,\bb\Xi}(\Gamma)\coloneqq {}&{} \left\lbrace f\in H^{1/2}(\Gamma)\colon  \iota_0(\bb F_j)(f|_{\Gamma_j}) \in \S^0_{\bb p_j,\bb\Xi_j}(\square)\text{ for all }j\leq N\right\rbrace,\\
 \bb \S_{\bb p,\bb\Xi}^1(\Gamma)\coloneqq {}&{} \left\lbrace \bb f\in \bb H_\times^{{-1/2}}(\div_\Gamma,\Gamma)\colon  \iota_1(\bb F_j)(\bb f|_{\Gamma_j}) \in \bb \S_{\bb p_j,\bb\Xi_j}^1(\square)\text{ for all }j\leq N\right\rbrace,\\
 \S^2_{\bb p,\bb\Xi}(\Gamma)\coloneqq {}&{} \left\lbrace f\in H^{-1/2}(\Gamma)\colon  \iota_2(\bb F_j)(f|_{\Gamma_j}) \in \S^2_{\bb p_j,\bb\Xi_j}(\square)\text{ for all }j\leq N\right\rbrace.
\end{align*}
Throughout this paper, we will denote by $p$ the minimal polynomial degree used for the construction of $\S^0_{\bb p,\bb\Xi}(\Gamma)$.
\begin{remark}
In the spirit of the isogeometric paradigm, degrees and knot vectors of the discrete B-spline spaces can be chosen to match the properties of the geometry discretization \cite{Hughes_2005aa}. 
Note, however, that there is no theoretical requirement for $\bb p$ and $\bb \Xi$ to match the discretization of the geometry if we assume sufficient regularity of the parametrization. This fact will be used later on to benchmark different orders of discretization on the same geometry. 
\end{remark}

By definition of the spline spaces above, the sequence 
\begin{equation}
\begin{tikzcd}
\S^0_{\bb p,\bb\Xi}(\Gamma)\ar{r}{\bcurl_\Gamma}& 
\bb \S_{\bb p,\bb\Xi}^1(\Gamma)\ar{r}{\div_\Gamma}&
\S^2_{\bb p,\bb\Xi}(\Gamma)
\end{tikzcd}\label{spline::sequence}
\end{equation}
is a conforming multipatch discretization of the two-dimensional sequence 
\begin{equation}
\begin{tikzcd}
H^{1/2}(\Gamma)\ar{r}{\bcurl_\Gamma}& 
\chispace \ar{r}{\div_\Gamma}&
H^{-1/2}(\Gamma).
\end{tikzcd}
\end{equation}
We refer to \cite{Buffa_2018aa} for an in-depth discussion on how these spline spaces on the boundary are connected with the B-spline discretization of the three-dimensional de Rham sequence.

Replacing $\chispace$ by $\bb\S^1_{\bb p, \bb \Xi}(\Gamma)\subset\chispace$ yields the discrete variational problem to \eqref{problem::variational::cont}, given as follows.
\begin{problem}[Discrete Propblem]
Find $\bb w_h\in\bb \S^1_{\bb p, \bb \Xi}(\Gamma)$ such that
\begin{align}
\langle\MSL_\kappa\bb w_h, \bb \mu_h\rangle_\tau=-\langle\bb \gamma_t^+\bb e_i,\bb\mu_h\rangle_\tau,\label{problem::variational::disc}
\end{align}
for all $\bb\mu_h\in \bb \S^1_{\bb p, \bb \Xi}(\Gamma)$.
\end{problem}
Given a basis $\{\bb\varphi_i\}_{i=1}^N$ of $\bb \S_{\bb p,\bb\Xi}^1(\Gamma)$,
this yields the linear system
\begin{align}\label{eq:linsys}
\MSL_{\kappa,h}\bb w_h=-\bb f_h,
\end{align}
where the right-hand side $\bb f_h$ is given by
$\big[\bb f_h\big]_j=\langle\bb\gamma_t^+\bb e_i,\bb\varphi_j\rangle_\times$
and the system matrix $\MSL_{\kappa,h}$ by
\begin{align}\label{eq:MSLtested}
\begin{aligned}
\big[\MSL_{\kappa,h}\big]_{i,j}
={}&
\langle\MSL_\kappa\bb\varphi_j, \bb\varphi_i\rangle_\times\\
={}&
\int_\Gamma\int_\Gamma G_{\kappa}(\bb x-\bb y)\bb\varphi_j(\bb x)\cdot\bb\varphi_i(\bb y)\dd\sigma_{\bb y}\dd\sigma_{\bb x}
\\
&\qquad-\frac{1}{\kappa^2}\int_\Gamma\int_\Gamma G_{\kappa}(\bb x-\bb y)\div_\Gamma\bb\varphi_j(\bb x)\div_\Gamma\bb\varphi_i(\bb y)\dd\sigma_{\bb y}\dd\sigma_{\bb x},
\end{aligned}
\end{align}
see also \cite{Buffa_2003ab}. We remark that the system matrix is symmetric, but not Hermitian.

\subsection{Approximation Properties and Discrete Inf-Sup Condition}\label{sec::subsec::approx}

The conforming spline spaces introduced in the previous section provide approximation results of optimal order in $\chispace$, w.r.t.~patchwise regularity. Therefore, setting $s\geq 0$, we define the patchwise norms
\[
	\norm{\bb f}_{\tilde {\bb H}^s(\Gamma)} \coloneqq \sum_{j\leq N} \norm{\bb f}_{\bb H^s(\Gamma_j)},\qquad
	\norm{\bb g}_{\tilde {\bb H}^s(\div_\Gamma,\Gamma)} \coloneqq \sum_{j\leq N} \norm{\bb g}_{\bb H^s(\div_\Gamma,\Gamma_j)},
\]
for all functions $\bb f\in \bb L^2(\Gamma)$ and $\bb g \in \chispace$ for which these expressions are well defined. 
The corresponding spaces of higher patchwise regularity are defined canonically as subspaces of $\bb L^2(\Gamma)$
and $\bb H(\div_\Gamma,\Gamma)$ with finite norm, see \cite{Buffa_2018aa}.
\begin{theorem}[Approximation Properties of $\bb \S^1_{\bb p,\bb \Xi}(\Gamma)$, \cite{Buffa_2018aa}]\label{thm::hdiv}
    Let $\bb f\in \tilde{\bb H}^s(\div_\Gamma,\Gamma)$, $0\leq s\leq p$ and denote by $\bb f_h$ the $\bb H_\times^{-1/2}(\div_\Gamma,\Gamma)$-orthogonal projection of $\bb f$ onto $\bb\S^1_{\bb p,\bb \Xi}(\Gamma)$.
    Then one finds 
    \begin{align*}
        \norm{\bb f-\bb f_h}_{\bb H_\times^{-1/2}(\div_\Gamma,\Gamma)} \lesssim h^{1/2+s}  \norm{\bb f}_{\tilde {\bb H}^s(\div_\Gamma,\Gamma)}.
    \end{align*}
\end{theorem}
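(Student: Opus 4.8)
The statement concerns the spline space $\bb\S^1_{\bb p,\bb\Xi}(\Gamma)$ alone and has nothing to do with the operator $\MSL_\kappa$, so the plan is to establish it by purely approximation-theoretic means, combining local spline approximation on the reference square with a duality argument that produces the half-order gain encoded in the exponent $1/2+s$. The first reduction is to note that, since $\bb f_h$ is the orthogonal projection in the graph norm, it is the \emph{best} approximation, so it suffices to exhibit one convenient competitor and bound its error. I would take $\bb v_h=\Pi^1\bb f$, where $(\Pi^0,\Pi^1,\Pi^2)$ is a triple of $L^2$-stable quasi-interpolants onto the spline complex \eqref{spline::sequence} that renders the commuting diagram discrete, so that $\div_\Gamma\Pi^1=\Pi^2\div_\Gamma$. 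With this choice the graph norm of the error $\bb f-\Pi^1\bb f$ splits into a field part measured in $\bb H^{-1/2}_\times(\Gamma)$ and a divergence part $\norm{\div_\Gamma\bb f-\Pi^2\div_\Gamma\bb f}_{H^{-1/2}(\Gamma)}$; the commuting property is exactly what lets these two pieces decouple into a vector-field and a scalar estimate of identical type.

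For each piece I would run an Aubin--Nitsche argument built on the adjoint of the projector. The key identity, valid for any bounded $L^2$-projection, is $\langle \bb f-\Pi^1\bb f,\,\phi\rangle_\times=\langle \bb f-\Pi^1\bb f,\,\phi-(\Pi^1)^*\phi\rangle_\times$, obtained from $\Pi^1(\Pi^1\bb f)=\Pi^1\bb f$; it holds \emph{without} requiring $\Pi^1$ to be $L^2$-orthogonal, only that it be idempotent and $L^2$-bounded. Writing the field term as $\sup_{\phi}\langle\bb f-\Pi^1\bb f,\phi\rangle_\times/\norm{\phi}_{\bb H^{1/2}_\times(\Gamma)}$ and inserting this identity gives the bound $\norm{\bb f-\Pi^1\bb f}_{\bb L^2(\Gamma)}\,\norm{\phi-(\Pi^1)^*\phi}_{\bb L^2(\Gamma)}$. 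The first factor is $\lesssim h^s\norm{\bb f}_{\tilde{\bb H}^s(\Gamma)}$ and the second is $\lesssim h^{1/2}\norm{\phi}_{\bb H^{1/2}_\times(\Gamma)}$, so dividing by $\norm{\phi}_{\bb H^{1/2}_\times}$ yields $h^{1/2+s}$. The divergence term is treated identically, with $\div_\Gamma\bb f\in\tilde H^s(\Gamma)$ approximated in $L^2$ at rate $h^s$ by $\Pi^2$ (degree $p-1$ splines reproduce polynomials up to degree $p-1$, hence $h^{\min(s,p)}=h^s$ for $0\le s\le p$), and duality against $H^{1/2}(\Gamma)$ supplying the extra $h^{1/2}$.

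The $L^2$ rates themselves I would establish on the reference square and transport to each $\Gamma_j$ through the pull-backs $\iota_0,\iota_1,\iota_2$. On $\square$ these are the standard Bramble--Hilbert quasi-interpolation estimates $\norm{g-\Pi g}_{L^2(\square)}\lesssim h^s\norm{g}_{H^s(\square)}$, which hold because the knot vectors are locally quasi-uniform (the constant $\theta$ of Definition~\ref{def::splines}) and the degrees are positive; the analogous bound for $(\Pi)^*$ supplies the test-function factor. The transfer to $\Gamma_j$ is $h$-uniform precisely because the parametrizations $\bb F_j$ are non-singular and invertible, so the surface measure $\tau$ and the Jacobian factors entering $\iota_1,\iota_2$ are bounded above and below independently of $h$; the commuting diagram then guarantees that the pulled-back operators remain projectors onto $\bb\S^1_{\bb p_j,\bb\Xi_j}(\square)$ and $\S^2_{\bb p_j,\bb\Xi_j}(\square)$ and that $\div_\Gamma$ commutes with $\Pi^1$.

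The main obstacle is the \emph{global} construction: the competitor $\Pi^1\bb f$ must lie in $\bb\S^1_{\bb p,\bb\Xi}(\Gamma)\subset\chispace$, i.e.\ its tangential-trace continuity across every interface $\Gamma_i\cap\Gamma_j$ must be enforced while simultaneously preserving idempotency, $L^2$-stability, the adjoint approximation property, and the commuting relation $\div_\Gamma\Pi^1=\Pi^2\div_\Gamma$. Building such multi-patch conforming commuting projectors — reconciling the matching conditions $\bb F_j(\cdot,1)\equiv\bb F_i(\cdot,0)$ with the degrees of freedom of the spline complex — is the delicate part, and is exactly the machinery developed in \cite{Buffa_2018aa}, on which I would rely rather than reprove. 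A secondary subtlety, flagged already in the excerpt, is that on a non-smooth $\Gamma$ the graph space $\chispace$ is not itself a duality space; this is why the duality step above is applied \emph{componentwise}, to the field space $\bb H^{-1/2}_\times(\Gamma)$ (legitimate since $0<1/2<1$, so it is dual to $\bb H^{1/2}_\times(\Gamma)$) and to the scalar space $H^{-1/2}(\Gamma)$, and never to the graph space as a whole.
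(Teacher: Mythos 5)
First, a remark on the comparison itself: the paper does not prove Theorem~\ref{thm::hdiv} at all --- it is imported verbatim from \cite{Buffa_2018aa} --- so there is no in-paper argument to measure your sketch against. Your overall architecture (reduce to a single competitor by best approximation, take the commuting quasi-interpolant so that the graph norm splits into a field part and a divergence part, obtain $L^2$ rates on $\square$ and transport them through $\iota_1,\iota_2$, and gain the extra $h^{1/2}$ by duality) is indeed the standard route and the one followed in spirit by the cited reference, and you correctly identify the multi-patch construction of the commuting projectors as the piece to be imported rather than reproved.

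The genuine gap is in the duality step. The identity $\langle \bb f-\bb{\tilde\Pi}^1_\Gamma\bb f,\phi\rangle_\times=\langle \bb f-\bb{\tilde\Pi}^1_\Gamma\bb f,\phi-(\bb{\tilde\Pi}^1_\Gamma)^*\phi\rangle_\times$ is correct for any idempotent, $L^2$-bounded operator, but the bound you then need, $\|\phi-(\bb{\tilde\Pi}^1_\Gamma)^*\phi\|_{\bb L^2(\Gamma)}\lesssim h^{1/2}\|\phi\|_{\bb H^{1/2}_\times(\Gamma)}$, is governed by the distance of $\phi$ to the \emph{range of the adjoint}, i.e.\ to the span of the (rotated) Riesz representers of the dual functionals of the quasi-interpolant --- not to the spline space. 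For a generic quasi-interpolant that span has no reason to approximate $\bb H^{1/2}_\times(\Gamma)$ at rate $h^{1/2}$, and describing the required estimate as ``the analogous bound for $(\Pi)^*$'' conflates the approximation property of the primal spline space with that of the dual system. The bound is automatic only when the projector is the $\bb L^2$-orthogonal one (whose adjoint with respect to $\langle\cdot,\cdot\rangle_\times$ projects onto the rotated spline space, which does approximate), but that projector does not commute with $\div_\Gamma$, so one cannot simultaneously keep the clean splitting of the graph norm and the clean duality argument. Reconciling these two requirements --- for instance by first proving that the graph-norm best-approximation error is bounded by the two separate best-approximation errors of $\bb f$ in $\bb H^{-1/2}_\times(\Gamma)$ and of $\div_\Gamma\bb f$ in $H^{-1/2}(\Gamma)$, each of which can then be treated with the orthogonal projection --- is precisely the nontrivial content that your sketch elides. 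A smaller point: the interpolation estimate you invoke (Lemma~\ref{lemma::interpolationlemma}) is stated only for $1\leq s\leq p$, so the range $0\leq s<1$ covered by the theorem requires a separate low-regularity argument, also because the quasi-interpolants are only defined on the proper subsets $\mathcal{D}(\,\cdot\,)$ of the trace spaces.
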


According to the classical theory of the electric field integral equation the following holds.

\begin{lemma}[Criteria for a Stable Discretization, {{\cite[Sec.~3]{Bespalov_2010aa}}, \cite[Prop.~4.1]{Buffa_2003aa}}]\label{theLemmaThatGivesInfSup}
	Under the assumptions that
	\begin{enumerate}
		\item there exists a continuous splitting $\chispace = \bb W\oplus \bb V$ such that the bilinear form induced by the variational formulation \eqref{problem::variational::cont} is stable and coercive on $\bb V\times \bb V$ and $\bb W\times \bb W$, and compact on $\bb V\times \bb W$ and $\bb W\times \bb V$,
		\item $\bb \S^1_{\bb p,\bb \Xi}(\Gamma)$ can be decomposed into a sum $\bb \S^1_{\bb p,\bb \Xi}(\Gamma)\coloneqq \bb W_h \oplus \bb V_h$ of closed subspaces of $\chispace$,
		\item $\bb W_h$ and $\bb V_h$ are stable under complex conjugation, and
		\item it holds that $\bb W_h\subseteq \bb W$, as well as the so-called \emph{gap-property}
			\begin{align}\label{eq::gap-property}
		 	\sup_{\bb v_h\in \bb V_{h}}\inf_{\bb v\in \bb V}\frac{\norm{\bb v-\bb v_h}_{\chispace}}{\norm{\bb v_h}_{\chispace}}\stackrel{h\rightarrow 0}{\longrightarrow} 0,
			\end{align}
	\end{enumerate}
	the discrete problem \eqref{problem::variational::disc} enjoys $\inf$-$\sup$-stability.
\end{lemma}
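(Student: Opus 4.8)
The plan is to derive a \emph{uniform discrete generalized Gårding inequality} from the continuous one furnished by Assumption~1, and then to invoke the classical compactness argument that turns such an inequality, together with the injectivity of $\MSL_\kappa$, into the discrete $\inf$-$\sup$ condition. Throughout, write $a(\bb u,\bb v)\coloneqq\langle\MSL_\kappa\bb u,\bb v\rangle_\times$ for the bilinear form of \eqref{problem::variational::cont}.

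First I would encode the sign structure of the splitting through a \emph{sign-flip isomorphism}. On the continuous level, Assumption~1 is precisely the statement that, with $X(\bb w+\bb v)\coloneqq\bb w-\bb v$ for $\bb w\in\bb W$, $\bb v\in\bb V$, the form $\bb u\mapsto\mathrm{Re}\,a(\bb u,\overline{X\bb u})$ is coercive up to a compact perturbation: the sign flip turns the indefinite diagonal blocks into a positive one, while the off-diagonal blocks are compact. Assumptions~2 and~3 let me mirror this on the discrete level by setting $X_h(\bb w_h+\bb v_h)\coloneqq\bb w_h-\bb v_h$ for the splitting $\bb\S^1_{\bb p,\bb\Xi}(\Gamma)=\bb W_h\oplus\bb V_h$; stability under complex conjugation guarantees that $\overline{X_h\bb u_h}$ is again an admissible test function, and since the splitting is into closed subspaces, $X_h$ is a uniformly bounded isomorphism of $\bb\S^1_{\bb p,\bb\Xi}(\Gamma)$.

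The heart of the argument is to prove, uniformly in $h$,
\[
\mathrm{Re}\,a(\bb u_h,\overline{X_h\bb u_h})\geq\alpha\norm{\bb u_h}_{\chispace}^2-\abs{k(\bb u_h,\bb u_h)},
\]
where $k$ is a compact bilinear form. Decomposing $\bb u_h=\bb w_h+\bb v_h$ and expanding into the four blocks, coercivity transfers \emph{verbatim} on the $\bb W_h\times\bb W_h$ block because $\bb W_h\subseteq\bb W$ by Assumption~4. On the $\bb V_h\times\bb V_h$ block the difficulty is that $\bb V_h\not\subseteq\bb V$; here the gap-property \eqref{eq::gap-property} is indispensable. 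I would replace each $\bb v_h$ by a best approximant $\bb v\in\bb V$, bound the resulting defect by the supremum in \eqref{eq::gap-property} times $\norm{\bb v_h}_{\chispace}$, and absorb it into the coercivity constant once $h$ is small enough that the gap falls below a fixed threshold. The two off-diagonal blocks are compact by Assumption~1 and are collected into $k$.

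Finally I would close the argument in the standard way (cf.~\cite{Buffa_2003aa,Bespalov_2010aa}): a uniform discrete Gårding inequality together with the injectivity of $\MSL_\kappa$ on $\chispace$ for non-resonant $\kappa$ --- i.e.\ the well posedness of the continuous Problem --- yields a discrete $\inf$-$\sup$ constant bounded away from zero for all sufficiently small $h$. The compact remainder $k$ is controlled through the approximation estimate of Theorem~\ref{thm::hdiv}, which forces the Galerkin defect of the compact part to vanish as $h\to0$. I expect the $\bb V_h\times\bb V_h$ block to be the main obstacle: transferring continuous coercivity to the discrete space when $\bb V_h$ is only \emph{asymptotically} contained in $\bb V$ is exactly what the gap-property is designed to handle, and making the absorption of the gap-induced defect rigorous --- so that it does not erode the coercivity constant uniformly in $h$ --- is the technical crux of the whole proof.
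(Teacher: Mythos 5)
Your sketch is correct, but note that the paper offers no proof of this lemma at all: it is imported verbatim from the cited sources (\cite[Sec.~3]{Bespalov_2010aa}, \cite[Prop.~4.1]{Buffa_2003aa}), and what you have written is a faithful reconstruction of exactly that argument --- discrete sign-flip isomorphism enabled by Assumptions~2--3, uniform discrete generalized \Garding{} inequality with the $\bb W_h\times\bb W_h$ block handled by $\bb W_h\subseteq\bb W$ and the $\bb V_h\times\bb V_h$ block by gap-absorption, then the compactness/injectivity argument together with the density of the trial spaces. The only point you gloss over is that the off-diagonal blocks $\bb W_h\times\bb V_h$ and $\bb V_h\times\bb W_h$ are likewise not restrictions of the compact blocks of Assumption~1 (since $\bb V_h\not\subseteq\bb V$) and must be split by the same gap-controlled substitution $\bb v_h=\bb v+(\bb v_h-\bb v)$ before being collected into the compact remainder, but this is the identical mechanism you already identify as the crux.
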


The continuous splitting of $\chispace$ has been discussed in the literature, see, e.g., \cite{Buffa_2003ab}, and is required for proving the $\inf$-$\sup$-stability of \eqref{problem::variational::cont}. 
One of the most concise (although not self-contained) constructions of said splitting and the discrete inf-sup condition according to this scheme is due to \cite{Bespalov_2010aa}, whose lines we will follow closely, starting with the introduction of some necessary operators. The theory behind them goes back to \cite{Hiptmair_2002aa}.

\begin{lemma}[Regularising Projection]\label{lem::regularizingprojection}
	For compact domains $\Omega$ with Lipschitz boundary there exists a continuous projection $\mathsf R\colon \chispace\to{\bb H_\times^{1/2}(\Gamma)}$ such that
	\begin{align}\label{eq:diffRcomm}
	(\div_\Gamma \circ \mathsf R)( \bb u) &=\div_\Gamma \bb u,
	\end{align}
	{for all $\bb u\in\chispace$.}
\end{lemma}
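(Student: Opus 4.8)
The plan is to reduce the whole statement to the construction of a bounded right inverse of the surface divergence and then obtain $\mathsf R$ by composition. Note first that $\div_\Gamma$ maps $\bb H_\times^{1/2}(\Gamma)$ continuously into the mean-free subspace $H_*^{-1/2}(\Gamma):=\{f\in H^{-1/2}(\Gamma):\langle f,1\rangle=0\}$, the mean-free constraint being forced by Stokes' theorem on the closed surface $\Gamma$. Suppose for the moment that one has a bounded linear operator $\mathsf L\colon H_*^{-1/2}(\Gamma)\to\bb H_\times^{1/2}(\Gamma)$ which is a right inverse of $\div_\Gamma$, i.e.\ $\div_\Gamma\circ\mathsf L=\Id$ on $H_*^{-1/2}(\Gamma)$, with range in the regular space $\bb H_\times^{1/2}(\Gamma)$. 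Then I would simply set $\mathsf R:=\mathsf L\circ\div_\Gamma$.

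With this definition every claim follows by elementary manipulation. Since $\div_\Gamma\bb z\in H^{-1/2}(\Gamma)$ for $\bb z\in\bb H_\times^{1/2}(\Gamma)$, we have the continuous inclusion $\bb H_\times^{1/2}(\Gamma)\hookrightarrow\chispace$, so $\mathsf R$ is bounded on $\chispace$ with range contained in $\bb H_\times^{1/2}(\Gamma)$. For every $\bb u\in\chispace$ the element $\div_\Gamma\bb u$ lies in $H_*^{-1/2}(\Gamma)$, whence $\div_\Gamma\mathsf R\bb u=\div_\Gamma\mathsf L\div_\Gamma\bb u=\div_\Gamma\bb u$, which is exactly \eqref{eq:diffRcomm}. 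Idempotency follows from the same identity, $\mathsf R^2\bb u=\mathsf L(\div_\Gamma\mathsf L\div_\Gamma\bb u)=\mathsf L\div_\Gamma\bb u=\mathsf R\bb u$, so $\mathsf R$ is a genuine projection. Thus the lemma rests entirely on the existence of $\mathsf L$.

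To construct $\mathsf L$, I would establish that $\div_\Gamma\colon\bb H_\times^{1/2}(\Gamma)\to H_*^{-1/2}(\Gamma)$ is surjective; a bounded right inverse then exists automatically, since between Hilbert spaces a bounded surjection admits a bounded section (restrict to the orthogonal complement of the kernel and invoke the bounded inverse theorem). Surjectivity amounts to the assertion that every admissible surface divergence is realised already by a \emph{regular} tangential field, which is precisely the content of the regular, Hodge-type decomposition $\bb u=\bb z+\bcurl_\Gamma\varphi$ of $\chispace$, with $\bb z\in\bb H_\times^{1/2}(\Gamma)$ and $\varphi\in H^{1/2}(\Gamma)$: given $f\in H_*^{-1/2}(\Gamma)$, choose any $\bb u\in\chispace$ with $\div_\Gamma\bb u=f$ (the standard surjectivity of $\div_\Gamma$ on the full space $\chispace$, which reflects the exactness of the surface sequence displayed in \eqref{spline::sequence}), decompose it, and use $\div_\Gamma\bcurl_\Gamma=0$ to conclude $\div_\Gamma\bb z=\div_\Gamma\bb u=f$.

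The remaining step, producing the regular decomposition, is the one I expect to be the main obstacle, because $\Gamma$ is only Lipschitz and piecewise smooth. A purely intrinsic construction — solving $\Delta_\Gamma\alpha=f$ for the Laplace--Beltrami operator and setting $\bb z=\grad_\Gamma\alpha$ — fails here, since the $H^{3/2}(\Gamma)$ elliptic shift that would place $\grad_\Gamma\alpha$ in $\bb H_\times^{1/2}(\Gamma)$ is destroyed by the edges and corners of $\Gamma$. I would therefore argue through the volume, following \cite{Hiptmair_2002aa}: lift $\bb u$ by a bounded right inverse of $\bb\gamma_t^-$ to a field $\bb E\in\bb H(\bcurl,\Omega)$, apply the stable regular decomposition $\bb E=\bb\Phi+\grad p$ with $\bb\Phi\in\bb H^1(\Omega)$ and $p\in H^1(\Omega)$, which is valid on arbitrary bounded Lipschitz domains, and take tangential traces. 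Since $\bb\gamma_t^-(\grad p)=\bcurl_\Gamma(p|_\Gamma)$ by the commuting property of the trace with the differential operators, and since $\bb\Phi\in\bb H^1(\Omega)$ yields $\bb\gamma_t^-\bb\Phi\in\bb H_\times^{1/2}(\Gamma)$, this produces exactly $\bb z=\bb\gamma_t^-\bb\Phi$ and $\varphi=p|_\Gamma$ with the required continuous dependence on $\bb u$, closing the argument.
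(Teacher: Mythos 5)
Your argument is correct, and its skeleton---factoring $\mathsf R=\mathsf L\circ\div_\Gamma$ through a bounded right inverse $\mathsf L\colon H^{-1/2}_*(\Gamma)\to\bb H^{1/2}_\times(\Gamma)$ of the surface divergence---is exactly the structure of the paper's operator; the two proofs differ only in how that right inverse is produced. The paper builds it explicitly: it solves the interior Neumann problem $\Delta w=0$, $\bb\grad\, w\cdot\bb n=\div_\Gamma\bb v$, lifts the divergence-free, flux-free field $\bb\grad\, w$ to an $\bb H^1(\Omega)$ vector potential via \cite[Theorem~3.4]{GR86}, and takes the rotated tangential trace, so that \eqref{eq:diffRcomm} follows from $\div_\Gamma(\bb\gamma_t^-\bb A)=\bcurl\bb A\cdot\bb n$. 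You instead obtain $\mathsf L$ abstractly (surjectivity plus a bounded section of a Hilbert-space surjection) and prove surjectivity by lifting an arbitrary preimage in $\chispace$ into the volume and invoking the stable regular decomposition of $\bb H(\bcurl,\Omega)$. Both routes rest on the same hard ingredient---the extraction of an $\bb H^1(\Omega)$ field from $\bb H(\bcurl,\Omega)$, respectively from divergence-free $\bb L^2$ data, on a Lipschitz domain---so neither is cheaper; yours makes the reduction and the idempotency completely transparent, while the paper's explicit formula is what gets upgraded in Lemma~\ref{lem::regularizingprojectionspline} to the higher-regularity projection $\mathsf R_0$ (there the extra half order of smoothness of the Neumann data is what drives the gain, so the concrete construction is not merely cosmetic). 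Two details you should spell out rather than assert: the continuous embedding $\bb H^{1/2}_\times(\Gamma)\hookrightarrow\chispace$, which you need both for $\mathsf R^2$ to be defined and for $\mathsf R$ to be continuous as a map into $\chispace$, holds because $\div_\Gamma(\bb\gamma_t\bb u)=\bcurl\bb u\cdot\bb n\in H^{-1/2}(\Gamma)$ for $\bb u\in\bb H^1$; and the ``standard surjectivity'' of $\div_\Gamma\colon\chispace\to H^{-1/2}_*(\Gamma)$ is itself most naturally proved by the very Neumann-problem construction the paper uses, so to avoid circularity you should cite it (e.g.\ \cite{Buffa_2003ab}) rather than rederive it.
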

\begin{proof}
	The definition of a suitable operator is done in \cite[Lem.~3.1]{Bespalov_2010aa}, which we shortly recap for later reference.
	
	{For any $\bb v\in\chispace$, the solution of the Neumann problem
	\begin{align}\label{eq:neumannprob}
	\begin{aligned}
	\Delta w ={}&0&&\text{in}~\Omega,\\
	\bb\grad\, w\cdot\bb n ={}&\div_{\Gamma}\bb v&&\text{on}~\Gamma,
	\end{aligned}
	\end{align}
	defines a field $\bb\grad\, w\in\bb H^0(\div 0,\Omega)$ with $\langle\bb\grad\, w\cdot\bb n,1\rangle_{L^2(\Gamma)}=0$. Using a continuous lifting operator $\mathsf{L}\colon\bb H^0(\div 0,\Omega)\to\bb H^1(\Omega)$ with $\bb\curl\mathsf{L}\bb u=\bb u$ for all $\bb u\in\bb H^0(\div 0,\Omega)$ satisfying $\langle\bb u\cdot\bb n,1\rangle_{L^2(\Gamma)}=0$, see, e.g., \cite[Theorem 3.4]{GR86}, we finally arrive at $\mathsf{R}\bb v\coloneqq\bb\gamma_t^-\mathsf{L}\bb\grad\, w\in\bb H^{1/2}_\times(\Gamma)$ and \eqref{eq:diffRcomm} follows. Since $\bb\grad\, w$ depends continuously on $\div_\Gamma\bb v$ and $\bb\gamma_t^-\colon\bb H^1(\Omega)\to\bb H^{1/2}_\times(\Gamma)$ is also continuous, the continuity of $\mathsf{R}$ follows.
	}
\end{proof}

Indeed, one can show the continuous $\inf$-$\sup$-condition via the splitting $\bb V\coloneqq\mathsf {R}\big( \chispace\big)$ and $\bb W\coloneqq(\operatorname{Id}-\mathsf R)\big(\chispace\big)$.

The construction of a corresponding discrete splitting relies on the multipatch interpolation operators introduced in \cite{Buffa_2018aa}, given by 
\begin{align*}
\tilde\Pi^0_\Gamma&\colon &\hspace{-2.8cm}H^{1/2}(\Gamma)\supseteq \mathcal{D}\big(\tilde\Pi^0_\Gamma\big)\to{}&{}\S_{\bb p,\bb\Xi}^0(\Gamma),\\
\bb{\tilde\Pi}^1_\Gamma&\colon&\hspace{-2.8cm}\chispace\supseteq\mathcal{D}\big(\bb{\tilde\Pi}^1_\Gamma\big) \to{}&{}\bb\S_{\bb p,\bb\Xi}^1(\Gamma),\\
\tilde\Pi^2_\Gamma&\colon &\hspace{-2.8cm}H^{-1/2}(\Gamma)\supseteq \mathcal{D}\big(\tilde\Pi^2_\Gamma\big)\to{}&{}\S_{\bb p,\bb\Xi}^2(\Gamma),
\end{align*}
with domains $\mathcal{D}(\,\cdot\,)$. Note that these projections commute with the surface differential operators $\bb \curl_\Gamma$ and $\div_\Gamma,$ i.e.,~one finds
\begin{align}
	\begin{aligned}
	(\bb \curl_\Gamma \circ \tilde\Pi^0_\Gamma )(f) &= (\bb {\tilde\Pi}^1_\Gamma\circ\bb\curl_\Gamma)(f),\\
	(\div_\Gamma \circ \,\bb {\tilde\Pi}^1_\Gamma )(\bb f) &= (\tilde\Pi^2_\Gamma\circ\div_\Gamma) (\bb f).
	\end{aligned}
	\label{eq::commutingpis}
\end{align}
Among other estimates about these interpolation operators, \cite{Buffa_2018aa} provides the following.
\begin{lemma}\label{lemma::interpolationlemma}
	Let $\bb f\in \bb{\tilde H}^s(\Gamma)$ for $1\leq s\leq p$. Then it holds that 
	\begin{align*}
		\norm{\bb f-\bb{\tilde\Pi}^1_\Gamma \bb f}_{\bb L^2(\Gamma)}&\lesssim h^s\norm{\bb f}_{\bb {\tilde H}^s(\Gamma)}.
	\end{align*}
\end{lemma}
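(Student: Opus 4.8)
The plan is to reduce the multipatch estimate to a patchwise estimate on the reference square $\square$, exploiting that $\bb{\tilde\Pi}^1_\Gamma$ is constructed in \cite{Buffa_2018aa} through the pullback $\iota_1(\bb F_j)$. By that construction one has, on each patch, the intertwining relation $\iota_1(\bb F_j)\big((\bb{\tilde\Pi}^1_\Gamma \bb f)|_{\Gamma_j}\big) = \bb{\tilde\Pi}^1_\square\big(\iota_1(\bb F_j)(\bb f|_{\Gamma_j})\big)$, where $\bb{\tilde\Pi}^1_\square$ denotes the corresponding interpolation onto $\bb\S^1_{\bb p_j,\bb\Xi_j}(\square)$. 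Since the $\bb L^2(\Gamma)$ norm decomposes as the patchwise sum $\|\bb g\|_{\bb L^2(\Gamma)}^2 = \sum_{j\leq N}\|\bb g\|_{\bb L^2(\Gamma_j)}^2$, it suffices to control $\|\bb f-\bb{\tilde\Pi}^1_\Gamma\bb f\|_{\bb L^2(\Gamma_j)}$ for each $j$ and then sum over the finitely many patches.

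First I would establish that the contravariant pullback $\iota_1(\bb F_j)$ induces a norm equivalence between $\bb L^2(\Gamma_j)$ and $\bb L^2(\square)$. Writing $\hat{\bb f}\coloneqq\iota_1(\bb F_j)(\bb f|_{\Gamma_j})$ and using that $\bb F_j$ is smooth, non-singular and invertible with $\tau$ bounded above and below, a change of variables shows $\|\bb g\|_{\bb L^2(\Gamma_j)}^2 = \int_\square \tau^{-1}\,\big|(d\bb F_j)^\intercal\,\iota_1(\bb F_j)(\bb g)\big|^2\opd\bb x$, so that both the factor $\tau\,(d\bb F_j^\intercal)^{-1}$ and its inverse $\tau^{-1}(d\bb F_j)^\intercal$ are pointwise bounded. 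Hence $\|\bb f-\bb{\tilde\Pi}^1_\Gamma\bb f\|_{\bb L^2(\Gamma_j)}\simeq\|\hat{\bb f}-\bb{\tilde\Pi}^1_\square\hat{\bb f}\|_{\bb L^2(\square)}$, with constants depending only on the geometry.

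Next I would invoke the tensor-product spline approximation estimate on $\square$. The operator $\bb{\tilde\Pi}^1_\square$ acts componentwise through univariate quasi-interpolants reproducing $S^{p_1,p_2-1}(\Xi_1,\Xi_2')\times S^{p_1-1,p_2}(\Xi_1',\Xi_2)$, for which the localized quasi-uniform estimates of \cite{Beirao-da-Veiga_2014aa} give $\|\hat{\bb f}-\bb{\tilde\Pi}^1_\square\hat{\bb f}\|_{\bb L^2(\square)}\lesssim h^s\|\hat{\bb f}\|_{\bb H^s(\square)}$ for $1\leq s\leq p$; the restriction $s\geq 1$ guarantees that the degrees of freedom defining $\bb{\tilde\Pi}^1_\square$ are well defined, while non-singularity of $\bb F_j$ ensures that the reference mesh size is comparable to the physical element diameter $h$. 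Transferring the Sobolev norm back and then summing, using $\big(\sum_j a_j^2\big)^{1/2}\leq\sum_j a_j$, finally yields $\|\bb f-\bb{\tilde\Pi}^1_\Gamma\bb f\|_{\bb L^2(\Gamma)}\lesssim h^s\sum_{j\leq N}\|\bb f\|_{\bb H^s(\Gamma_j)} = h^s\|\bb f\|_{\tilde{\bb H}^s(\Gamma)}$.

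The main obstacle I anticipate is the transfer of \emph{fractional} Sobolev regularity across the contravariant pullback, i.e.\ bounding $\|\hat{\bb f}\|_{\bb H^s(\square)}\lesssim\|\bb f\|_{\bb H^s(\Gamma_j)}$ for non-integer $s$. Unlike the $\bb L^2$ case, here the Piola-type map mixes components and introduces derivatives of $\bb F_j$, so one must argue that composition with a smooth diffeomorphism together with multiplication by the smooth matrix-valued factor $\tau\,(d\bb F_j^\intercal)^{-1}$ preserves the spaces $\bb H^s$ with controlled constants. This is precisely the content underlying the transformation estimates of \cite{Buffa_2018aa}, which I would cite rather than reprove; once the reference estimate and the two norm equivalences are in place, the remaining steps are routine.
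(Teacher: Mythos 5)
First, be aware that the paper does not prove this lemma at all: it is imported verbatim from \cite{Buffa_2018aa} (``Among other estimates about these interpolation operators, \cite{Buffa_2018aa} provides the following''), so there is no in-paper argument to compare against. Your outline is the standard route --- pull back to $\square$ patch by patch, use the norm equivalence induced by the Piola-type transform $\iota_1(\bb F_j)$, invoke tensor-product quasi-interpolation estimates on the reference square, and sum over the finitely many patches --- and this is indeed the skeleton of the argument in the cited reference. Since you also defer the two genuinely technical ingredients (the fractional-order transformation estimate $\norm{\iota_1(\bb F_j)(\bb f|_{\Gamma_j})}_{\bb H^s(\square)}\lesssim\norm{\bb f}_{\bb H^s(\Gamma_j)}$ and the reference-domain spline approximation estimate) to that same reference, your proposal is best read as a correct roadmap rather than a self-contained proof; that is at least consistent with how the paper itself treats the statement.

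The one concrete inaccuracy is the asserted intertwining relation $\iota_1(\bb F_j)\big((\bb{\tilde\Pi}^1_\Gamma\bb f)|_{\Gamma_j}\big)=\bb{\tilde\Pi}^1_\square\big(\iota_1(\bb F_j)(\bb f|_{\Gamma_j})\big)$. The global operator $\bb{\tilde\Pi}^1_\Gamma$ must map into the \emph{conforming} multipatch space $\bb\S^1_{\bb p,\bb\Xi}(\Gamma)\subset\chispace$, which forces the normal components of the interpolant to match across patch interfaces. A purely patchwise construction applied to a generic $\bb f\in\tilde{\bb H}^s(\Gamma)$ (which need not have matching interface data in the pointwise sense seen by the interpolation functionals) would not produce agreeing interface values from the two adjacent patches, so the conforming operator necessarily couples degrees of freedom of neighbouring patches, and $(\bb{\tilde\Pi}^1_\Gamma\bb f)|_{\Gamma_j}$ is not a function of $\bb f|_{\Gamma_j}$ alone. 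The repair is standard --- one proves a local estimate in which the error on $\Gamma_j$ is controlled by the $\bb H^s$-norms of $\bb f$ over $\Gamma_j$ \emph{and its neighbours} --- and this is precisely why the right-hand side of the lemma carries the patchwise sum $\norm{\bb f}_{\tilde{\bb H}^s(\Gamma)}=\sum_{j\leq N}\norm{\bb f}_{\bb H^s(\Gamma_j)}$ rather than a single-patch norm. Your final summation step absorbs this modification without change, but the clean commutation as you state it should not be taken as the starting point of the argument.
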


{
Other than in \cite{Bespalov_2010aa}, we cannot use the operator $\mathsf{R}$ to introduce a discrete splitting, since the image of $\mathsf{R}$ is not patch-wise in $\bb H^1$, which would be required to be contained in $\mathcal{D}\big(\bb{\tilde\Pi}^1_\Gamma\big)$. Instead, we have to introduce another regularising projection.
\begin{lemma}[Regularising Projection for higher Regularity]\label{lem::regularizingprojectionspline}
	For compact domains $\Omega$ with Lipschitz boundary there exists a continuous projection $\mathsf R_0\colon\bb H^0(\div_{\Gamma},\Gamma)\to\tilde{\bb H}^1(\Gamma)$ such that
	$(\div_\Gamma \circ \mathsf R_0)( \bb u) =\div_\Gamma \bb u,$
	for all {$\bb u\in\bb H^0(\div_{\Gamma},\Gamma)$}.
\end{lemma}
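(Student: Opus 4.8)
The plan is to mirror the construction of the regularising projection $\mathsf R$ from Lemma~\ref{lem::regularizingprojection}, exploiting that on the smaller space $\bb H^0(\div_\Gamma,\Gamma)\subset\chispace$ the surface divergence of the argument is square integrable rather than merely in $H^{-1/2}(\Gamma)$. Concretely, for $\bb u\in\bb H^0(\div_\Gamma,\Gamma)$ I would set $g:=\div_\Gamma\bb u\in L^2(\Gamma)$, which satisfies the compatibility condition $\langle g,1\rangle_{L^2(\Gamma)}=0$ because $\Gamma$ is closed, and solve the very same Neumann problem \eqref{eq:neumannprob} with this right-hand side. This produces a harmonic field $\bb\grad\, w$, which I lift through the operator $\mathsf L$ and whose interior tangential trace defines $\mathsf R_0\bb u:=\bb\gamma_t^-\mathsf L\bb\grad\, w$, exactly as for $\mathsf R$; in other words $\mathsf R_0$ is nothing but $\mathsf R$ restricted to the more regular data class, with a sharpened target space.

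With this definition the two algebraic properties come essentially for free. The identity $\div_\Gamma\mathsf R_0\bb u=\div_\Gamma\bb u$ of \eqref{eq:diffRcomm} is inherited verbatim from the construction of $\mathsf R$, and since $\mathsf R_0\bb u$ depends on $\bb u$ only through $g=\div_\Gamma\bb u$, applying $\mathsf R_0$ a second time uses the datum $\div_\Gamma\mathsf R_0\bb u=g$ and hence reproduces the same field; thus $\mathsf R_0^2=\mathsf R_0$ and $\mathsf R_0$ is indeed a projection. The genuinely new content is therefore the mapping property: I must show that $\mathsf R_0$ lands in the space $\tilde{\bb H}^1(\Gamma)$ of patch-wise $\bb H^1$ fields and depends continuously on $\bb u$ in that stronger norm.

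For this I would trace the improved regularity through each step. Because the Neumann datum $g$ now lies in $L^2(\Gamma)$ rather than in $H^{-1/2}(\Gamma)$, the shift estimate for the Laplacian gains half a Sobolev order over the situation in Lemma~\ref{lem::regularizingprojection}, so that near $\Gamma$ the harmonic field $\bb\grad\, w$ is correspondingly smoother; choosing the lifting $\mathsf L$ to be regularity preserving and invoking the matching trace theorem then places $\bb\gamma_t^-\mathsf L\bb\grad\, w$ in $\bb H^1$ on each smooth patch $\Gamma_j$, with norm controlled by $\norm{g}_{L^2(\Gamma)}\lesssim\norm{\bb u}_{\bb H^0(\div_\Gamma,\Gamma)}$. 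Summing over $j\leq N$ then yields the claimed bound in the $\tilde{\bb H}^1(\Gamma)$-norm.

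The main obstacle is precisely this last regularity bookkeeping on a merely piecewise smooth, non-smooth Lipschitz boundary: elliptic and trace regularity up to and across the patch interfaces and corners is delicate, since global $H^{3/2+\varepsilon}$ smoothness of $w$ generally fails at edges, so the estimates have to be localised to the smooth patches and tracked patch-wise, which is exactly what the norm $\tilde{\bb H}^1(\Gamma)$ is designed to accommodate. An alternative, fully intrinsic route avoids the volume altogether: solve the surface problem $\Delta_\Gamma\phi=g$ weakly in $H^1(\Gamma)$ and set $\mathsf R_0\bb u:=\bb\grad_\Gamma\phi$, for which $\div_\Gamma\mathsf R_0\bb u=\Delta_\Gamma\phi=g$ and the projection property are immediate, and the same patch-wise elliptic regularity $\phi\in H^2(\Gamma_j)$ gives $\bb\grad_\Gamma\phi\in\tilde{\bb H}^1(\Gamma)$; this keeps the construction inside the target space but shifts the identical difficulty to the regularity of the Laplace--Beltrami solution at the patch junctions.
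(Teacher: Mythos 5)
Your main construction is exactly the paper's: take $g=\div_\Gamma\bb u\in L^2(\Gamma)$, solve the same Neumann problem \eqref{eq:neumannprob} so that the harmonic gradient gains half an order (landing in $\bb H^{1/2}(\div 0,\Omega)$), lift with a regularity-preserving operator, and apply the rotated tangential trace patch by patch to land in $\tilde{\bb H}^1(\Gamma)$. The only ingredient you leave abstract --- the existence and continuity of the improved lifting $\bb H^{1/2}(\div 0,\Omega)\to\bb H^{3/2}(\Omega)$ --- is precisely what the paper pins down by citing \cite[Remark 3.12]{GR86} together with an interpolation argument between \cite[Theorem 3.4]{GR86} and \cite[Corollary 3.3]{GR86}, so your outline is correct and essentially identical to the paper's proof.
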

\begin{proof}
The proof is in analogy to Lemma~\ref{lem::regularizingprojection}. First, we remark that $\div_{\Gamma}\bb u\in L^2(\Gamma)$. Thus, \eqref{eq:neumannprob} yields a field $\bb\grad\, w\in \bb H^{1/2}(\div 0,\Omega)$ with $\langle\bb\grad\, w\cdot\bb n,1\rangle_{L^2(\Gamma)}=0$. \cite[Remark 3.12]{GR86} shows that there is a continuous lifting operator $\mathsf{L}_{1/2}\colon\bb H^{1/2}(\div 0,\Omega)\to\bb H^{3/2}(\Omega)$ with $\bb\curl\mathsf{L}\bb u=\bb u$ for all $\bb u\in\bb H^{1/2}(\div 0,\Omega)$ satisfying $\langle\bb u\cdot\bb n,1\rangle_{L^2(\Gamma)}=0$. This yields the assertion by patchwise application of the rotated tangential trace.
We remark that the continuity of $\mathsf{L}_{1/2}$ follows by noting that the construction of the extensions in \cite[Theorem 3.4]{GR86} and \cite[Corollary 3.3]{GR86} depend continuously on the input data. The interpolation argument of \cite[Remark 3.12]{GR86} then yields the continuity assertion, since the image of the procedure in \cite[Theorem 3.4]{GR86} and \cite[Corollary 3.3]{GR86} coincides for equal input data in terms of their respective equivalence classes.
\end{proof}
}

{In analogy to the continuous setting, by} Lemma \ref{lem::regularizingprojectionspline} and the construction and properties of the quasi-interpolation operators constructed in \cite{Buffa_2018aa}, the definition of the discrete splitting via
\begin{align*}
\bb V_h\coloneqq (\bb{\tilde\Pi}_\Gamma^1\mathsf \circ\, {\mathsf R_0})\big(\bb \S^1_{\bb p,\bb \Xi}(\Gamma)\big),\qquad \bb W_h\coloneqq (\Id - \bb{\tilde\Pi}_\Gamma^1\circ{\mathsf R_0})\big(\bb \S^1_{\bb p,\bb \Xi}(\Gamma)\big),
\end{align*}
{is well defined and} would be a suitable candidate to fulfill the assumptions of Lemma~\ref{theLemmaThatGivesInfSup}.
{
\begin{remark}\label{rem::whinw}
	The construction of both $\mathsf R$ and $\mathsf R_0$ make it clear that the kernel of the respective operator consists exactly of the divergence free functions. Thus, it follows that $\bb W_h\subseteq \bb W$ holds, compare \cite[Eq.~3.5]{Bespalov_2010aa}.	
\end{remark}
}

We are now ready to provide a statement about the $\inf$-$\sup$-stability of the discretized EFIE.

\begin{theorem}\label{lem::disc_inf-sup}
	{The discrete problem \eqref{problem::variational::disc} enjoys $\inf$-$\sup$-stability.}
\end{theorem}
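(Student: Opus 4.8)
The plan is to verify, for the candidate splitting $\bb V_h=(\bb{\tilde\Pi}^1_\Gamma\circ\mathsf R_0)(\bb\S^1_{\bb p,\bb\Xi}(\Gamma))$ and $\bb W_h=(\Id-\bb{\tilde\Pi}^1_\Gamma\circ\mathsf R_0)(\bb\S^1_{\bb p,\bb\Xi}(\Gamma))$, the four hypotheses of Lemma~\ref{theLemmaThatGivesInfSup}, and then simply invoke it. The continuous splitting demanded by the first hypothesis is $\bb V=\mathsf R(\chispace)$, $\bb W=(\Id-\mathsf R)(\chispace)$ from Lemma~\ref{lem::regularizingprojection}, whose stability, coercivity, and compactness are exactly the classical properties of \cite{Bespalov_2010aa,Buffa_2003aa}, and the containment $\bb W_h\subseteq\bb W$ in the fourth hypothesis is already recorded in Remark~\ref{rem::whinw}. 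Thus the genuine work concentrates on three points: that the discrete decomposition is an internal direct sum of closed subspaces, that each summand is stable under complex conjugation, and the gap-property~\eqref{eq::gap-property}.

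For the direct-sum property I would show that $Q\coloneqq\bb{\tilde\Pi}^1_\Gamma\circ\mathsf R_0$ acts as a projection on $\bb\S^1_{\bb p,\bb\Xi}(\Gamma)$. Two ingredients enter. First, $Q$ preserves the surface divergence on the spline space: for $\bb s\in\bb\S^1_{\bb p,\bb\Xi}(\Gamma)$ the commuting relation~\eqref{eq::commutingpis} together with $\div_\Gamma\circ\mathsf R_0=\div_\Gamma$ gives $\div_\Gamma Q\bb s=\tilde\Pi^2_\Gamma\div_\Gamma\bb s$, and since $\div_\Gamma\bb s\in\S^2_{\bb p,\bb\Xi}(\Gamma)$ by~\eqref{spline::sequence} while $\tilde\Pi^2_\Gamma$ reproduces $\S^2_{\bb p,\bb\Xi}(\Gamma)$, this equals $\div_\Gamma\bb s$. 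Second, the construction in Lemma~\ref{lem::regularizingprojectionspline} shows that $\mathsf R_0\bb u$ depends on $\bb u$ only through $\div_\Gamma\bb u$. Combining the two yields $\mathsf R_0(Q\bb s)=\mathsf R_0\bb s$, hence $Q^2\bb s=Q\bb s$; this idempotency gives the internal direct sum $\bb\S^1_{\bb p,\bb\Xi}(\Gamma)=\bb V_h\oplus\bb W_h$ of (finite-dimensional, hence closed) subspaces. Stability under conjugation follows because both $\mathsf R_0$ (built from the real Neumann problem~\eqref{eq:neumannprob}, a real lifting, and a trace) and $\bb{\tilde\Pi}^1_\Gamma$ (a real-coefficient spline quasi-interpolant) commute with conjugation, while $\bb\S^1_{\bb p,\bb\Xi}(\Gamma)$ possesses a real basis.

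The gap-property is the main obstacle, precisely because~\eqref{eq::gap-property} is measured in the weak $\chispace$-norm whereas the available interpolation estimate, Lemma~\ref{lemma::interpolationlemma}, lives in $\bb L^2(\Gamma)$ and requires $\tilde{\bb H}^1$-regularity. For $\bb v_h\in\bb V_h$ I would write $\bb v_h=Q\bb s$ and choose the competitor $\bb v\coloneqq\mathsf R_0\bb s$, which lies in $\bb V$ because $\mathsf R_0$ coincides with $\mathsf R$ on the common domain $\bb H^0(\div_\Gamma,\Gamma)$ (as used in the proof of Lemma~\ref{lem::regularizingprojectionspline}) and which, crucially, belongs to $\tilde{\bb H}^1(\Gamma)$. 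The key observation is that $\bb v_h-\bb v=\bb{\tilde\Pi}^1_\Gamma\mathsf R_0\bb s-\mathsf R_0\bb s$ is divergence-free, since $\div_\Gamma\bb v_h=\div_\Gamma\bb s=\div_\Gamma\bb v$ by the divergence-preservation above. On divergence-free fields the graph norm collapses to its $\bb H^{-1/2}_\times(\Gamma)$-part, so using $\bb L^2(\Gamma)\hookrightarrow\bb H^{-1/2}_\times(\Gamma)$ and then Lemma~\ref{lemma::interpolationlemma} with $s=1$ gives
\[
\norm{\bb v_h-\bb v}_{\chispace}\lesssim\norm{\mathsf R_0\bb s-\bb{\tilde\Pi}^1_\Gamma\mathsf R_0\bb s}_{\bb L^2(\Gamma)}\lesssim h\,\norm{\mathsf R_0\bb s}_{\tilde{\bb H}^1(\Gamma)}.
\]
Because $\mathsf R_0\bb s$ depends only on $\div_\Gamma\bb s=\div_\Gamma\bb v_h$, continuity of $\mathsf R_0$ bounds $\norm{\mathsf R_0\bb s}_{\tilde{\bb H}^1(\Gamma)}\lesssim\norm{\div_\Gamma\bb v_h}_{L^2(\Gamma)}$, and a standard inverse estimate on $\S^2_{\bb p,\bb\Xi}(\Gamma)$ (admissible by local quasi-uniformity of the knot vectors) yields $\norm{\div_\Gamma\bb v_h}_{L^2(\Gamma)}\lesssim h^{-1/2}\norm{\div_\Gamma\bb v_h}_{H^{-1/2}(\Gamma)}\le h^{-1/2}\norm{\bb v_h}_{\chispace}$. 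Chaining these estimates produces $\norm{\bb v_h-\bb v}_{\chispace}\lesssim h^{1/2}\norm{\bb v_h}_{\chispace}$, so the supremum in~\eqref{eq::gap-property} is $\mathcal O(h^{1/2})\to0$, and Lemma~\ref{theLemmaThatGivesInfSup} then delivers the claim. The delicate points to get right are the reproduction property of $\tilde\Pi^2_\Gamma$ and the divergence-only dependence of $\mathsf R_0$ (which drive both the idempotency and the divergence-free structure), together with balancing the $h^{1/2}$ gain from interpolation against the $h^{-1/2}$ loss from the inverse estimate.
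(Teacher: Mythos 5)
Your argument is correct and follows the same overall strategy as the paper: verify the hypotheses of Lemma~\ref{theLemmaThatGivesInfSup} with the splitting $\bb V_h=(\bb{\tilde\Pi}^1_\Gamma\circ\mathsf R_0)(\bb\S^1_{\bb p,\bb\Xi}(\Gamma))$, take the competitor $\mathsf R_0\bb v_h$ (your $\mathsf R_0\bb s$ coincides with it, since $\mathsf R_0$ depends only on the divergence and $Q$ preserves it), observe that the error $\mathsf R_0\bb v_h-\bb{\tilde\Pi}^1_\Gamma\mathsf R_0\bb v_h$ is divergence-free, and apply Lemma~\ref{lemma::interpolationlemma} to gain a factor $h$ that is partly spent on an inverse estimate. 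Two points where you genuinely deviate, both to your credit: you spell out why $Q=\bb{\tilde\Pi}^1_\Gamma\circ\mathsf R_0$ is idempotent (reproduction of $\S^2_{\bb p,\bb\Xi}(\Gamma)$ by $\tilde\Pi^2_\Gamma$ plus the divergence-only dependence of $\mathsf R_0$), where the paper simply defers to \cite[Sec.~6]{Bespalov_2010aa}; and in the final chaining you use the sharper continuity $\norm{\mathsf R_0\bb v_h}_{\tilde{\bb H}^1(\Gamma)}\lesssim\norm{\div_\Gamma\bb v_h}_{L^2(\Gamma)}$ together with the elementary scalar inverse estimate on $\S^2_{\bb p,\bb\Xi}(\Gamma)$, whereas the paper invokes the full vector-valued inverse estimate of Lemma~\ref{lem:inverseestimate} (whose proof occupies the appendix and requires patchwise continuity). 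Your route thus bypasses the hardest auxiliary result of the paper, which is a real simplification if the sharper continuity bound is accepted --- and it does follow from the construction in Lemma~\ref{lem::regularizingprojectionspline}, since $\mathsf R_0\bb v$ is built entirely from the Neumann data $\div_\Gamma\bb v\in L^2(\Gamma)$. The one omission relative to the paper: the paper's proof treats separately the case $\bb\S^1_{\bb p,\bb\Xi}(\Gamma)\not\subset\bb H^{1/2}_\times(\Gamma)$ (maximal knot repetition), which it dispatches by reduction to classical Raviart--Thomas theory; you should either note that your argument does not rely on patchwise continuity of the vector splines, or add that case explicitly.
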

\begin{proof}
	{First, we consider the case of $\bb\S^1_{\bb p,\bb \Xi}(\Gamma)\subset \bb H^{1/2}_\times(\Gamma),$ i.e., when $\bb\S^1_{\bb p,\bb \Xi}(\Gamma)$ is patchwise continuous.
		Due to Remark \ref{rem::whinw}, it remains to check the gap property \eqref{eq::gap-property} for $\bb V$ and $\bb V_h$.}
	We write
	\begin{align*}
	\sup_{\bb v_h\in \bb V_{h}}\inf_{\bb v\in \bb V}\frac{\norm{\bb v-\bb v_h}_{\chispace}}{\norm{\bb v_h}_{\chispace}} 
	\lesssim{}&{}\sup_{\bb v_h\in \bb V_{h}}\frac{\norm{{\mathsf R_0} \bb v_h-\bb v_h}_{\chispace}}{\norm{\bb v_h}_{\chispace}}\\
	={}&{} \sup_{\bb v_h\in \bb V_{h}}\frac{\norm{({\mathsf R_0} -\bb{\tilde\Pi}^1_\Gamma \circ {\mathsf R_0})(\bb  v_h)}_{\chispace}}{\norm{\bb v_h}_\chispace}.
	\end{align*}
	Note that the last equality holds because one can show that $\tilde{\bb\Pi}^1_\Gamma \circ {\mathsf R_0}$ is a projection, {as done in \cite[Sec.~6]{Bespalov_2010aa} for $\mathsf R$.} Thus, it holds that
	\[
	(\tilde{\bb\Pi}^1_\Gamma \circ {\mathsf R_0}) (\bb V_h )= \bb V_h \coloneqq (\tilde{\bb\Pi}^1_\Gamma \circ {\mathsf R_0})(\bb \S^1_{\bb p,\bb \Xi}(\Gamma)).
	\]
	{Since {the canonical embedding $\bb H^0(\div_{\Gamma},\Gamma)\hookrightarrow \chispace$ is continuous}, 
	we arrive at  
	$$	\norm{(\mathsf R_0 -\bb{\tilde\Pi}^1_\Gamma \circ \mathsf R_0)(\bb  v_h)}_{\chispace}\lesssim \norm{(\mathsf R_0 -\bb{\tilde\Pi}^1_\Gamma \circ \mathsf R_0)(\bb  v_h)}_{H^0(\div_\Gamma,\Gamma)}.$$
	By the divergence preserving property of $\mathsf R_0$ and the fact that the interpolation operators are projections which commute w.r.t.~the surface differential operator, we can apply 
	Lemma~\ref{lemma::interpolationlemma} arriving at
	\[
	\norm{(\mathsf R_0 -\bb{\tilde\Pi}^1_\Gamma \circ \mathsf R_0)(\bb  v_h)}_{\bb H^0(\div_\Gamma,\Gamma)} = \norm{(\mathsf R_0 -\bb{\tilde\Pi}^1_\Gamma \circ \mathsf R_0)(\bb  v_h)}_{\bb L^2(\Gamma)} \lesssim h\norm{\mathsf R_0\bb  v_h}_{\bb {\tilde H}^1(\Gamma)}.
	\]
	Note that the right hand side is well defined due to $\mathsf R_0 (\bb V_h) \subset \bb {\tilde H}^1(\Gamma).$
	Combining the above with the continuity of the $\mathsf R_0$ operator and inverse estimates, cf.~Lemma~\ref{lem:inverseestimate}, yields
	\[
	\norm{(\mathsf R_0 -\bb{\tilde\Pi}^1_\Gamma \circ \mathsf R_0)(\bb  v_h)}_{\chispace} \lesssim h^{1/2}\|\bb v_h\|_{\chispace},
	\]
	and thus the assertion for the case $\bb\S^1_{\bb p,\bb \Xi}(\Gamma)\subset \bb H^{1/2}_\times(\Gamma)$. The case $\bb\S^1_{\bb p,\bb \Xi}(\Gamma)\not\subset \bb H^{1/2}_\times(\Gamma)$, which is realized only for maximal knot repetition within knot vectors, reduces to the classical theory of higher order Raviart Thomas elements on quadrilaterals, cf.~\cite{Buffa_2003aa,Zaglmayr_2006aa}.
	}
\end{proof}

Following classical Babu\v{s}ka-Brezzi theory \cite{Babuska_1969aa,Xu_2002aa}, we can finally combine Theorems \ref{thm::hdiv} and \ref{lem::disc_inf-sup} and arrive at the main result of this section.

\begin{theorem}[Discretization Error]\label{thm::quasioptimality}
	The solution to \eqref{problem::variational::disc} exists and is unique.
	
	Moreover, assuming $\bb w\in\tilde{\bb H}^{s}(\div_\Gamma,\Gamma)$ for some $0<s\leq p$, for the solutions $\bb w\in \bb H^{-1/2}_\times(\div_\Gamma,\Gamma)$ and $\bb w_h\in \bb \S^1_{\bb p,\bb \Xi}(\Gamma)$ of \eqref{problem::variational::cont} and \eqref{problem::variational::disc} we find that
	\begin{align*}
	\norm{\bb w-\bb w_h}_{\bb H^{-1/2}_\times(\div_\Gamma,\Gamma)}\lesssim h^{s+1/2}\norm{\bb w}_{\tilde{\bb H}^{s}(\div_\Gamma,\Gamma)}.
	\end{align*} 
\end{theorem}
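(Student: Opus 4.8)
The plan is to assemble the estimate from the two pillars already in place---the discrete $\inf$-$\sup$ stability of Theorem~\ref{lem::disc_inf-sup} and the approximation estimate of Theorem~\ref{thm::hdiv}---via the classical Babu\v{s}ka argument. Write $a(\bb u,\bb v)\coloneqq\langle\MSL_\kappa\bb u,\bb v\rangle_\times$ for the bilinear form on $\chispace\times\chispace$ induced by \eqref{problem::variational::cont}; since $\tilde{\MSL}_\kappa$ and the trace $\bb\gamma_t^+$ are continuous, $a(\cdot,\cdot)$ is bounded with some continuity constant $C_a$. Existence and uniqueness of $\bb w_h$ are then immediate: the discrete problem \eqref{problem::variational::disc} uses equal trial and test spaces $\bb\S^1_{\bb p,\bb\Xi}(\Gamma)$, so the $\inf$-$\sup$ stability of Theorem~\ref{lem::disc_inf-sup} forces the square matrix $\MSL_{\kappa,h}$ to be injective, hence invertible.

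For the error bound I would first record Galerkin orthogonality. Because $\bb\S^1_{\bb p,\bb\Xi}(\Gamma)\subset\chispace$ is conforming and both problems share the same right-hand side and bilinear form, subtracting \eqref{problem::variational::disc} from \eqref{problem::variational::cont} tested against $\bb\mu_h\in\bb\S^1_{\bb p,\bb\Xi}(\Gamma)$ gives $a(\bb w-\bb w_h,\bb\mu_h)=0$. Denoting by $\beta>0$ the discrete $\inf$-$\sup$ constant of Theorem~\ref{lem::disc_inf-sup}, for any $\bb v_h\in\bb\S^1_{\bb p,\bb\Xi}(\Gamma)$ one estimates
\begin{align*}
\beta\,\norm{\bb w_h-\bb v_h}_{\chispace}\leq\sup_{\bb\mu_h}\frac{\abs{a(\bb w_h-\bb v_h,\bb\mu_h)}}{\norm{\bb\mu_h}_{\chispace}}=\sup_{\bb\mu_h}\frac{\abs{a(\bb w-\bb v_h,\bb\mu_h)}}{\norm{\bb\mu_h}_{\chispace}}\leq C_a\norm{\bb w-\bb v_h}_{\chispace},
\end{align*}
where the middle equality is Galerkin orthogonality. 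The triangle inequality then upgrades this to the C\'ea-type bound $\norm{\bb w-\bb w_h}_{\chispace}\leq(1+C_a/\beta)\inf_{\bb v_h}\norm{\bb w-\bb v_h}_{\chispace}$.

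It remains to control the best-approximation error. Taking $\bb v_h$ to be the $\chispace$-orthogonal projection of $\bb w$ onto $\bb\S^1_{\bb p,\bb\Xi}(\Gamma)$ and invoking Theorem~\ref{thm::hdiv} under the hypothesis $\bb w\in\tilde{\bb H}^s(\div_\Gamma,\Gamma)$, $0<s\leq p$, yields $\inf_{\bb v_h}\norm{\bb w-\bb v_h}_{\chispace}\lesssim h^{s+1/2}\norm{\bb w}_{\tilde{\bb H}^s(\div_\Gamma,\Gamma)}$, and composition with the C\'ea bound delivers the claim. The main obstacle is not this assembly but the uniformity of $\beta$: the EFIE bilinear form is coercive only up to a compact perturbation (a generalized \Garding{} inequality), so discrete stability is asymptotic in nature, and one must be sure that the constant produced by Theorem~\ref{lem::disc_inf-sup}---extracted from the gap property \eqref{eq::gap-property} through the abstract Lemma~\ref{theLemmaThatGivesInfSup}---stays bounded away from zero as $h\to0$. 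It is precisely this $h$-independence that keeps the quasi-optimality constant $1+C_a/\beta$ finite and legitimizes the stated rate.
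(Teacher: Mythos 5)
Your proposal is correct and is essentially the argument the paper intends: the paper's proof is the single sentence ``Following classical Babu\v{s}ka--Brezzi theory, we can finally combine Theorems~\ref{thm::hdiv} and~\ref{lem::disc_inf-sup},'' and you have simply written out that combination in full (Galerkin orthogonality, the C\'ea-type bound from the discrete $\inf$-$\sup$ constant, and the best-approximation estimate). Your closing remark on the $h$-uniformity of $\beta$ is a fair caveat, and it is exactly what Lemma~\ref{theLemmaThatGivesInfSup} together with the gap property \eqref{eq::gap-property} is designed to deliver.
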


As a corollary, we can predict the expected convergence rates of the
scattered electric field. Similar to scalar-valued problems,
the convergence rate of the field doubles.
\begin{corollary}\label{cor:potentialconv}
	Let $\bb x\in\Omega^c$ fixed. Let $\bb w$ be the solution to \eqref{problem::variational::cont} and $\bb w_h$ the solution
	to the numerical problem \eqref{problem::variational::disc}.
	Then, for $\bb e_s=\tilde{\MSL}_\kappa\bb w$ and $\bb e_{s,h}=\tilde{\MSL}_\kappa\bb w_h,$ it holds
	\[
	\|\bb e_s(\bb x)-\bb e_{s,h}(\bb x)\|_{\mathbb{C}^3}
	\leq C(\bb x)h^{2p+1}
	\|\bb w\|_{\tilde{\bb H}^{p}(\div_\Gamma,\Gamma)},
	\]
	if $\bb w$ and the solution of a suitable adjoint problem are sufficiently smooth.
\end{corollary}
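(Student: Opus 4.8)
The plan is to run an Aubin--Nitsche duality argument, exploiting that for a fixed evaluation point $\bb x\in\Omega^c$ the map $\bb\xi\mapsto\tilde{\MSL}_\kappa\bb\xi(\bb x)$ is a \emph{smooth} linear functional of the density. I would fix a direction $\bb g\in\mathbb C^3$ with $\norm{\bb g}_{\mathbb C^3}=1$ and set $J(\bb\xi)\coloneqq \bb g\cdot\tilde{\MSL}_\kappa\bb\xi(\bb x)$. Since $\bb e_s-\bb e_{s,h}=\tilde{\MSL}_\kappa(\bb w-\bb w_h)$ by linearity, and since $\norm{\bb v}_{\mathbb C^3}=\sup_{\norm{\bb g}_{\mathbb C^3}=1}\abs{\bb g\cdot\bb v}$, it suffices to bound $\abs{J(\bb w-\bb w_h)}$ uniformly in $\bb g$. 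As $\bb x$ is fixed with $\dist(\bb x,\Gamma)>0$, the kernel $G_\kappa(\bb x-\cdot)$ appearing in \eqref{eq:ESLP} is smooth on $\Gamma$, so $J$ is a bounded linear functional on $\chispace$.

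Next I would introduce the dual problem. By the symmetry of $G_\kappa$, and hence the symmetry of the bilinear form $a(\bb u,\bb\xi)\coloneqq\langle\MSL_\kappa\bb u,\bb\xi\rangle_\times$ (recall that the system matrix \eqref{eq:MSLtested} is symmetric), a reciprocity computation identifies $J(\bb\xi)$ with the pairing of $\bb\xi$ against the rotated tangential trace of the dipole field radiated from $\bb x$ in direction $\bb g$. Thus the dual solution $\bb z\in\chispace$, defined by $a(\bb z,\bb\xi)=J(\bb\xi)$ for all $\bb\xi\in\chispace$, solves an EFIE whose right-hand side is $\bb\gamma_t^+$ of this dipole field. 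Well-posedness of the continuous problem, valid for non-resonant $\kappa$, guarantees that $\bb z$ exists, is unique, and depends continuously on the datum $J$; because the dipole field is smooth in a neighborhood of $\Gamma$, its trace is smooth, which is exactly the content of the hypothesis that ``the solution of a suitable adjoint problem is sufficiently smooth'', i.e. $\bb z\in\tilde{\bb H}^{p}(\div_\Gamma,\Gamma)$.

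With $\bb z$ in hand I would invoke Galerkin orthogonality, $a(\bb w-\bb w_h,\bb\mu_h)=0$ for all $\bb\mu_h\in\bb\S^1_{\bb p,\bb\Xi}(\Gamma)$, which follows by subtracting the discrete problem \eqref{problem::variational::disc} from the continuous problem \eqref{problem::variational::cont}. Hence, for the $\chispace$-orthogonal projection $\bb z_h$ of $\bb z$ onto $\bb\S^1_{\bb p,\bb\Xi}(\Gamma)$, the symmetry of $a$ gives
\begin{align*}
J(\bb w-\bb w_h)=a(\bb z,\bb w-\bb w_h)=a(\bb w-\bb w_h,\bb z)=a(\bb w-\bb w_h,\bb z-\bb z_h).
\end{align*}
Continuity of $a$ on $\chispace\times\chispace$ then yields $\abs{J(\bb w-\bb w_h)}\lesssim \norm{\bb w-\bb w_h}_{\chispace}\norm{\bb z-\bb z_h}_{\chispace}$. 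Applying Theorem~\ref{thm::quasioptimality} with $s=p$ to the first factor and Theorem~\ref{thm::hdiv} with $s=p$ to the second gives
\begin{align*}
\abs{J(\bb w-\bb w_h)}\lesssim h^{p+1/2}\norm{\bb w}_{\tilde{\bb H}^{p}(\div_\Gamma,\Gamma)}\cdot h^{p+1/2}\norm{\bb z}_{\tilde{\bb H}^{p}(\div_\Gamma,\Gamma)}.
\end{align*}
Taking the supremum over $\bb g$ and absorbing the continuity constant of $a$ together with $\sup_{\norm{\bb g}_{\mathbb C^3}=1}\norm{\bb z}_{\tilde{\bb H}^p(\div_\Gamma,\Gamma)}$ into a single constant $C(\bb x)$ produces the claimed rate $h^{2p+1}$.

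The main obstacle is the dual-problem setup rather than the concluding estimate: one must verify that $J$ is genuinely represented through the EFIE bilinear form with a smooth datum, i.e. carry out the reciprocity identification of $J(\bb\xi)$ with the dipole-trace pairing and confirm that the resulting adjoint equation is governed by the same complex-symmetric operator, so that its well-posedness together with the regularity hypothesis delivers $\bb z\in\tilde{\bb H}^{p}(\div_\Gamma,\Gamma)$ and makes Theorem~\ref{thm::hdiv} applicable with $s=p$. The dependence of $C(\bb x)$ on $\dist(\bb x,\Gamma)$---which blows up as $\bb x\to\Gamma$ because the dipole field loses smoothness near the boundary---is precisely what forces the statement to be pointwise in $\bb x$.
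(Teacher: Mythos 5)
Your proposal is correct and follows essentially the same route as the paper: an Aubin--Nitsche duality argument with an adjoint EFIE whose datum is the point-evaluation functional $\bb\xi\mapsto\big(\tilde{\MSL}_\kappa\bb\xi\big)(\bb x)$, combined with Galerkin orthogonality, continuity of the bilinear form, and the a priori estimate of Theorem~\ref{thm::quasioptimality} under the same smoothness hypothesis on the adjoint solution. The only cosmetic difference is that you bound the second factor by the best-approximation error of the dual solution via Theorem~\ref{thm::hdiv}, whereas the paper uses the Galerkin error of the discrete adjoint solution and applies Theorem~\ref{thm::quasioptimality} to both factors; both give $h^{p+1/2}$ per factor and hence the rate $h^{2p+1}$.
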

\begin{proof}
	One readily verifies that $\big(\tilde{\MSL}_{\kappa}\cdot\big)(\bb x)$ is a linear
	and continuous functional on $\chispace$ for given $\bb x$. Let then $\bb\varphi^{(\bb
	x)}$ be the solution of the adjoint problem of finding $\bb \varphi^{(\bb x)}\in \chispace$ such that
		\begin{align}\label{eq:adjointprob}
			\big\langle \MSL_\kappa\bb\xi, \bb\varphi^{(\bb x)}\big\rangle_\times = \big(\tilde{\MSL}_{\kappa}\bb \xi\big)(\bb x)
		\end{align}
		holds for all $\bb\xi \in \chispace$.
	Let $\bb\varphi^{(\bb x)}_h$ denote its discrete analogon.
	The assertion now follows by applying a standard argument, see also
	\cite[Theorem 4.2.14]{SS11}, to each component of the scattered field
	to obtain
	\[
	\|\bb e_s(\bb x)-\bb e_{s,h}(\bb x)\|_{\mathbb{C}^3}
	\lesssim
	\norm{\bb w-\bb w_h}_{\bb H^{-1/2}_\times(\div_\Gamma,\Gamma)}
	\norm{\bb \varphi^{(\bb x)}-\bb \varphi^{(\bb x)}_h}_{\bb H^{-1/2}_\times(\div_\Gamma,\Gamma)}.
	\]
	The previous theorem yields the assertion with $C(\bb x)=C\|\bb\varphi^{(\bb x)}\|_{\tilde{\bb
	H}^{p}(\div_\Gamma,\Gamma)}$, if the solutions to \eqref{problem::variational::cont} and
	\eqref{eq:adjointprob} are smooth enough.
\end{proof}

\begin{remark}
	The proof applies to any linear and continuous output functional of $\bb w$. Thus,
	similar error estimates hold also for other quantities of interest, for
	example for path integrals of the electric field, i.e., voltages, {or radar cross sections}, cf.~\cite{Jackson_1998aa}.
\end{remark}
\section{Details of Implementation}\label{sec::implement}

This section is concerned with the {implementation of a specialized fast method for the
electric field integral equation} with conforming B-spline spaces
in the isogeometric framework \eqref{eq:linsys}. The assembly of the system
matrices will be discussed in the following subsection. Since, in
general, the matrices will be densely populated, Section \ref{sec:FMM}
discusses a fast compression method based on interpolation of
the kernel function as introduced in \cite{Dolz_2016aa,Dolz_2018aa}.
A particular feature of this approach is that its simple implementation
integrates effortlessly into the $\mathcal{H}^2$-matrix framework
\cite{Borm_2010aa}, which is a more efficient variant of the
commonly used $\mathcal{H}$-matrix framework \cite{Hac15}. Section
\ref{sec:FMMerror} is then concerned with the error analysis of the
method.

Before we start, we briefly comment on the adaptive cross
approximation, which has been used in \cite{Simpson_2018aa}. Being
a simple and purely algebraic algorithm, it is very popular in the
engineering community \cite{Kurz_2002aa}. However, due to the overlapping supports
of higher-order B-spline discretizations, it will
inevitably lead to redundant kernel and geometry evaluations,
and will thus become increasingly inefficient for higher order.
In contrast, the scheme we are going to present in this section
avoids such redundant computations by construction.

\subsection{Assembly of the System Matrix}
We assume that the B-spline space
$\bb \S_{\bb p,\bb\Xi_m}^1(\Gamma)$ is, on each patch $\Gamma _j$, generated by the
tuple $\bb\Xi _{m,j}=(\Xi_m,\Xi_m)$, where $\Xi_m$ is an equidistant knot vector with $2^m$,
$m\geq 0$, elements. This corresponds to $m$ steps of uniform refinement in terms
of the reference domain and generates a nested sequence of meshes. Then, for
each level of refinement $m$, the mesh consists of $4^m$ elements per patch.
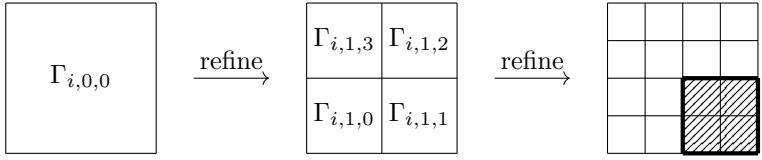
\begin{figure}
	\centering
		\begin{tikzpicture}
		\draw (0,0) -- (0,2);
		\draw (0,2) -- (2,2);
		\draw (2,2) -- (2,0);
		\draw (2,0) -- (0,0);
		\node (C1) at (1,1) {$\Gamma_{i,0,0}$};
		
		\node (A) at (2.5,1) {};
		\node (B) at (3.5,1) {};
		\draw [->] (A.center) -- (B.center) node [midway, above] {refine};
		
		\draw (0+4,0) -- (0+4,2);
		\draw (0+4,2) -- (2+4,2);
		\draw (2+4,2) -- (2+4,0);
		\draw (2+4,0) -- (0+4,0);
		\draw (0+4,1) -- (2+4,1);
		\draw (0+4+1,0) -- (0+4+1,2);
		\node (C21) at (1+4-.5,1-.5) {$\Gamma_{i,1,0}$};
		\node (C22) at (1+4+.5,1-.5) {$\Gamma_{i,1,1}$};
		\node (C23) at (1+4+.5,1+.5) {$\Gamma_{i,1,2}$};
		\node (C24) at (1+4-.5,1+.5) {$\Gamma_{i,1,3}$};
		
		\node (C) at (6.5,1) {};
		\node (D) at (7.5,1) {};
		\draw [->] (C.center) -- (D.center) node [midway, above] {refine};
		
		\path [pattern = north east lines] (0+9,0) rectangle (0+9+1,1);
		
		\draw (0+8,0) -- (0+8,2);
		\draw (0+8,2) -- (2+8,2);
		\draw (2+8,2) -- (2+8,0);
		\draw (2+8,0) -- (0+8,0);
		\draw (0+8,1) -- (2+8,1);
		\draw (0+8+1,0) -- (0+8+1,2);
		\draw (0+8+.5,0) -- (0+8+.5,2);
		\draw (0+8+1.5,0) -- (0+8+1.5,2);
		\draw (0+8,1.5) -- (2+8,1.5);
		\draw (0+8,.5) -- (2+8,.5);

		\draw[ultra thick] (0+9,0) -- (0+9,1);
		\draw[ultra thick] (0+9+0,0) -- (0+9+1,0);
		\draw[ultra thick] (0+9+1,1) -- (0+9,1);
		\draw[ultra thick] (0+9+1,1) -- (0+9+1,0);
		
		\end{tikzpicture}
		\caption{Refinement of the patch induced by the $i$-th mapping. Bold region corresponds to cluster $\Gamma_{\bb \lambda}$ with $\bb \lambda = (i,1,1)$.}

	\label{fig::refinement}
\end{figure}
The key point of this refinement strategy is that it induces a quadtree structure on the geometry, cf.~Figure \ref{fig::refinement}, which we will use for our compression scheme. Each element $\Gamma_{i,j,k}$ within the nested sequence of meshes will be refered to by a tuple $(i,j,k)\eqqcolon \bb\lambda$, where $i$ denotes the corresponding parametric mapping, $j$ showcases the level of refinement of the element and $k$ denotes the index of the element in hierarchically order. For notational purposes, we will define $\abs{\bb \lambda}\coloneqq j$ and also introduce the
diffeomorphisms $\bb F_{\bb\lambda}\colon\square\to\Gamma_{\bb\lambda}$ which can easily
be defined by combining $\bb F_i$ with a suitable affine transformation.
For the efficient compression, each instance of $\Gamma_{\bb\lambda}$ is also considered as a \emph{cluster}, in the sense that $\Gamma_{\bb\lambda}$ will be considered as the set of tree leaves appended to the subtree with root $\Gamma_{\bb\lambda}.$ Na\"ively said, $\Gamma_{\bb \lambda}$ can be visualised as ``a square region on the geometry''.
The hierarchically ordered collection of all $\Gamma_{\bb\lambda}$ will be called \emph{cluster tree} and denoted by $\mathcal T$.

For each pair of clusters in $\mathcal T$, the fundamental solution $G_\kappa$ from
\eqref{eq:HelmholtzGreen} can be localized to a \emph{localized kernel function}
\begin{align}
G_{\kappa,\bb\lambda,\bb \lambda'} \colon \square\times \square & \to\mathbb C,
\qquad G_{\kappa,\bb\lambda,\bb \lambda'}(\bb s,\bb t)=G_\kappa\big(\bb F_{\bb\lambda}(\bb s)-\bb F_{\bb\lambda'}(\bb t)\big)\label{eq:lockernel}
\end{align}
which reparametrizes the fundamental solution to $\square\times\square$.
This reduces the dimension (in terms of input variables) of the fundamental solution artificially.

On each element $\Gamma_{\bb\lambda}$, ansatz functions $\varphi_{\bb\lambda}$ can
be defined by lifting suitable shape functions $\hat\varphi$ on $\square$ to the surface by {the suitable (localized) pullback, thus defining $\varphi _{\bb\lambda}$}.
To define suitable shape functions of polynomial degree $p$ on $\square$, we introduce the knot vector $\Xi_m^*$, which is generated from $\Xi_m$ by increasing the multiplicity of each knot to $p+1$. We then define the spaces $\S_{p,m}^*(\square)$, to be the discontinuous spaces generated by $\bb p=(p,p)$ and $\bb\Xi_m^*=(\Xi_m^*,\Xi_m^*)$. Then, for the particular case $m=0$, $\S_{p,0}^*(\square)$ contains all tensorised polynomials of degree $p$ on $\square$. Later, we will also require $\S_{p,m}^*(\square)$, $m>0$, which is generated by tensorised polynomials of degree $p$ on every element on the unit square.
The span of all ansatz functions $\varphi_{\bb\lambda}$ with $|\bb\lambda|=m$ then
yields a global discrete discontinuous function space $\S^*_{p,m}(\Gamma)$ of dimension $k\coloneqq 2^{2m}N(p+1)^2$.

Since B-splines are piecewise polynomials, it clearly holds that
\[
\bb \S_{\bb p,\bb\Xi_m}^1(\Gamma)\subseteq\bb\S^*_{p,m}(\Gamma):=\S^*_{p,m}(\Gamma)\times\S^*_{p,m}(\Gamma),
\]
with $\bb p=(p,p)$ and $\bb\Xi_m=(\Xi_m,\Xi_m)$. We can
therefore represent each basis function of $\bb \S_{\bb p,\bb\Xi_m}^1(\Gamma)$
by a linear combination of basis functions of $\bb\S^*_{p,m}(\Gamma)$. This
yields a transformation matrix $\bb T$, which transforms the coefficient vector of a
function in $\bb \S_{\bb p,\bb\Xi_m}^1(\Gamma)$ to the coefficient vector of the
corresponding function in $\bb\S^*_{p,m}(\Gamma)$. Then, instead of assembling
the system of linear equations \eqref{eq:linsys} with respect to
$\bb\S_{\bb p,\bb\Xi_m}^1(\Gamma)$, one may assemble it with respect to
$\bb\S^*_{p,m}(\Gamma)$ to obtain a system matrix $\bb V_{\kappa,h}^*$
and a vector $\bb f_h^*$. A linear system of equations equivalent to \eqref{eq:linsys}
is then given by
\begin{align}
\bb T^\intercal \bb V_{\kappa,h}^* \bb T \bb w = -\bb T^\intercal \bb f_h^*.\label{eq::superspace}
\end{align}
Since the dimension of $\bb\S^*_{p,m}(\Gamma)$ is larger than the
dimension of $\bb\S_{\bb p,\bb\Xi_m}^1(\Gamma)$, the matrix $\bb V_{\kappa,h}^*$ is
larger than the matrix $\bb V_{\kappa,h}$. However, it has been shown in \cite{WegglerMatrix} for the case of classical higher order Raviart-Thomas elements
that the superspace approach can achiever better compression rates and, thus, better
computation times. In this particular case, the non-zero elements in $\bb T$ were
either $1$ or $-1$. In \cite{Dolz_2016aa, Dolz_2018aa}, the superspace approach has been
applied to represent higher order B-spline spaces for Laplace and Helmholtz problems,
where the elements of $\bb T$ were the coefficients of a suitable basis transformation.
Thus, the superspace approach in \eqref{eq::superspace} can be implemented as a mixture
of the two: Whereas, on each patch $\Gamma _j$, one can use the approach of
\cite{Dolz_2018aa} to find a suitable transformation matrix between
$\S_{p,m}^*(\Gamma_j)\times\S_{p,m}^*(\Gamma_j)$ and
$\S _{\bb p, \bb\Xi _{m}}^1(\Gamma)|_{\Gamma _j}$, one can use the approach of
\cite{WegglerMatrix} to enforce continuity across patch boundaries. The transformation
matrix $\bb T$ can then be seen as the product of two sparse matrices.

\begin{remark}
From an implementation point of view, the transformation matrix between
$\S_{p,m}^*(\Gamma_j)\times\S_{p,m}^*(\Gamma_j)$ and
$\S _{\bb p, \bb\Xi _{m}}^1(\Gamma)|_{\Gamma _j}$
can easily be constructed in a black-box fashion by exploiting the tensor product structure of
the two spaces and spline-interpolation in one dimension.
\end{remark}

The highly local support of the ansatz functions in $\bb\S^*_{p,m}(\Gamma)$ {has
several advantages. First, the numerical integration for the evaluation of the
matrix entries can be done with standard quadrature methods for higher order boundary
element methods, see \cite{SS97} or \cite{Harbrecht_2001aa}. Second,
it} will allow us to employ a version of the fast multipole method for the
matrix compression which perfectly fits the framework of isogeometric analysis. 
Of course, one may also use any other compression method to approximate $\bb V_{\kappa,h}^*$,
but we will see that our version of the fast multipole method in combination with
the structure of the isogemoetric mappings directly fits into the efficient
$\mathcal{H}^2$-matrix framework. Other compression methods tailored to
isogeometric mappings, but in the lowest-order context and in the less efficient
$\mathcal{H}$-matrix framework, have been compared in \cite{Harbrecht_2013ab}.

Before we introduce the compression scheme, we first
have to pull the matrix represention \eqref{eq:MSLtested}
back to the reference domain. According to \cite{Peterson_2006aa}, for two
basis functions $\bb\varphi_i$ and $\bb\varphi_j$
of $\bb\S^*_{p,m}(\Gamma)$ supported on $\Gamma_{\bb\lambda(i)}$ and $\Gamma_{\bb\lambda(j)}$, the first integral is given by
\begin{align}\label{eq:MSLref1}
\begin{aligned}
&\int_\Gamma\int_\Gamma G_k(\bb x-\bb y)\bb\varphi_i(\bb x)\cdot\bb\varphi_j(\bb y)\dd\sigma_{\bb y}\dd\sigma_{\bb x}\\
&{}\qquad\qquad=\int_\square\int_\square G_{\kappa,\bb\lambda(i),\bb\lambda(j)}(\bb s,\bb t)\hat{\bb\varphi}_j(\bb s)^{\intercal}d\bb F_{\bb\lambda(j)}(\bb s)^\intercal d\bb F_{\bb\lambda(i)}(\bb t)\hat{\bb\varphi}_i(\bb t)\dd\bb t\dd\bb s\end{aligned}
\end{align}
and the second by
\begin{align}\label{eq:MSLref2}
\begin{aligned}
&\int_\Gamma\int_\Gamma G_k(\bb x-\bb y)\div_\Gamma\bb\varphi_j(\bb x)\div_\Gamma\bb\varphi_i(\bb y)\dd\sigma_{\bb y}\dd\sigma_{\bb x}\\
&{}\qquad\qquad=\int_\square\int_\square G_{\kappa,\bb\lambda(i),\bb\lambda(j)}(\bb s,\bb t)\div\hat{\bb\varphi}_j(\bb s)\div\hat{\bb\varphi}_i(\bb t)\dd\bb t\dd\bb s.\end{aligned}
\end{align}
Assuming that a finite dimensional basis of $\bb\S^*_{p,m}(\Gamma)$ is given
in terms of scalar functions, i.e.,
\[
\bigg\{
\begin{bmatrix}
\varphi_i\\
0
\end{bmatrix},
\begin{bmatrix}
0\\
\varphi_j
\end{bmatrix}
\colon
\varphi_i,\varphi_j~\text{basis
	functions of}~\S^*_{p,m}(\Gamma)
\bigg\},
\]
the matrix $\bb V_{\kappa,h}^*$ can be further decomposed into
\[
\bb V_{\kappa,h}^*
=
\begin{bmatrix}
\bb V_{\kappa,h}^{(1,1)} & 
\bb V_{\kappa,h}^{(1,2)}\\
\bb V_{\kappa,h}^{(2,1)} &
\bb V_{\kappa,h}^{(2,2)}
\end{bmatrix},
\]
with
\begin{align}\label{eq:Vkh}
\begin{aligned}
\Big[\bb V_{\kappa,h}^{(\alpha,\beta)}\Big]_{i,j}
={}&
\int_\square\int_\square G_{\kappa,\bb\lambda(i),\bb\lambda(j)}(\bb s,\bb t)
\Big(
\langle\partial_\alpha\bb F_{\bb\lambda(i)}(\bb s),\partial_\beta\bb F_{\bb\lambda(j)}(\bb t)\rangle\hat{\varphi}_j(\bb s)\hat{\varphi}_i(\bb t)\\
&\qquad\qquad\qquad\qquad\qquad\qquad\qquad-\frac{1}{\kappa^2}\partial_\alpha\hat{\varphi}_j(\bb s)\partial_\beta\hat{\varphi}_i(\bb t)\Big)\dd\bb t\dd\bb s,
\end{aligned}
\end{align}
for $\alpha,\beta=1,2$.

Here, we denote by $\hat{\varphi}_i$ the pullback of the basis function $\varphi_i$ to the reference domain, i.e.,
\[
\hat{\varphi}_i=\varphi_i\circ\bb F_{\bb\lambda(i)}.
\]
This means that $\hat{\varphi}_i$ is effectively an element of $\S _{p,0}^*(\square)$, i.e., it is the tensor product polynomials.
\begin{remark}
	To obtain efficiency in an actual implementation, one may choose
	to simultaneously assemble the $\bb V_{\kappa,h}^{(\alpha,\beta)}$
	and exploit the symmetry $\bb V_{\kappa,h}^{(2,1)}=
	\big(\bb V_{\kappa,h}^{(1,2)}\big)^{\intercal}$ and the
	symmetry of $\bb V_{\kappa,h}^{(1,1)}$ and $\bb V_{\kappa,h}^{(2,2)}$.
	Employing an element-wise integration scheme avoids redundant
	evaluations of kernel function and geometry. This can be maintained
	in the following compression scheme.
\end{remark}

\subsection{Compression of the System Matrix}\label{sec:FMM}
Due to the non-locality of the fundamental solution $G_\kappa$, the system
matrix $\MSL_{k,h}$ given by \eqref{eq:MSLtested} is densely populated. Its storage and
assembly cost are thus prohibitively expensive for higher-dimensional ansatz
and test spaces, and an efficient numerical implementation with compression technique is
needed.
We follow the approach of \cite{Dolz_2016aa,Dolz_2018aa} to compress the matrices
$\bb V_{\kappa,h}^{(\alpha,\beta)}$, $\alpha,\beta=1,2$, in terms of {a specialized}
fast multipole method, which yields a representation of these matrices in terms
of $\mathcal{H}^2$-matrices, see also \cite{Borm_2010aa}. However, the approach is only applicable to matrices
of the kind
\begin{align*}
\big[\bb A\big]_{i,j}
={}&
\int_\Gamma\int_\Gamma G_{\kappa}(\bb x-\bb y)\varphi_j(\bb x)\varphi_i(\bb y)\dd\sigma_{\bb y}\dd\sigma_{\bb x}\\
={}&
\int_\square\int_\square G_{\kappa,\bb\lambda(i),\bb\lambda(j)}(\bb s,\bb t)\hat{\varphi}_j(\bb s)\hat{\varphi}_i(\bb t)\dd\bb t\dd\bb s,
\end{align*}
which does not readily fit the format of the matrices from \eqref{eq:Vkh} due
to the derivatives of the geometry mappings {contained in the} basis functions {and the involved surface divergences}.
In the following, we will, therefore, adapt the construction to the setting
of the electric single layer operator.

For constructing the \(\mathcal{H}^2\)-matrix representation, 
consider the level-wise Cartesian product \(\mathcal{T}\boxtimes
\mathcal{T}:=\big\{\Gamma_{\bb\lambda}\times\Gamma_{\bb\lambda'}
\colon\Gamma_{\bb\lambda},\Gamma_{\bb\lambda'}\in\mathcal{T}, 
|\bb\lambda|=|\bb\lambda'|\big\}\) of the cluster tree $\mathcal{T}$. 
Compressible matrix blocks are then identified by the 
following \emph{admissibility condition}.

\begin{definition}
	The clusters \(\Gamma_{\bb\lambda}\) and \(\Gamma_{\bb\lambda^\prime}\) 
	with \(|\bb\lambda|=|\bb\lambda^\prime|\) are called \emph{admissible} if
	\begin{equation}\label{eq:admissibility}
	\max\big\{\diam(\Gamma_{\bb\lambda}),\diam(\Gamma_{\bb\lambda^\prime})\big\}
	\leq\eta\dist(\Gamma_{\bb\lambda},\Gamma_{\bb\lambda^\prime})
	\end{equation}
	holds for a fixed \(\eta\in (0,1)\). The largest collection of 
	admissible blocks \(\Gamma_{\bb\lambda}\times\Gamma_{\bb\lambda'}
	\in\mathcal{T}\boxtimes\mathcal{T}\) such that 
	\(\Gamma_{\operatorname{dad}(\bb\lambda)}\times
	\Gamma_{\operatorname{dad}(\bb\lambda')}\) is not admissible 
	forms the \emph{far-field} \(\mathcal{F}\subset\mathcal{T}
	\boxtimes\mathcal{T}\)of the operator. The remaining 
	non-admissible blocks correspond to the \emph{near-field} 
	\(\mathcal{N}\subset\mathcal{T}\boxtimes\mathcal{T}\) of the operator.
\end{definition}

The far-field conforms with the compressible matrix blocks, whereas the
near-field is treated by the classical boundary element method, see
Figure~\ref{fig:Hmatrix} for an illustration.
\begin{figure}
\centering
\includegraphics[width=0.3\textwidth]{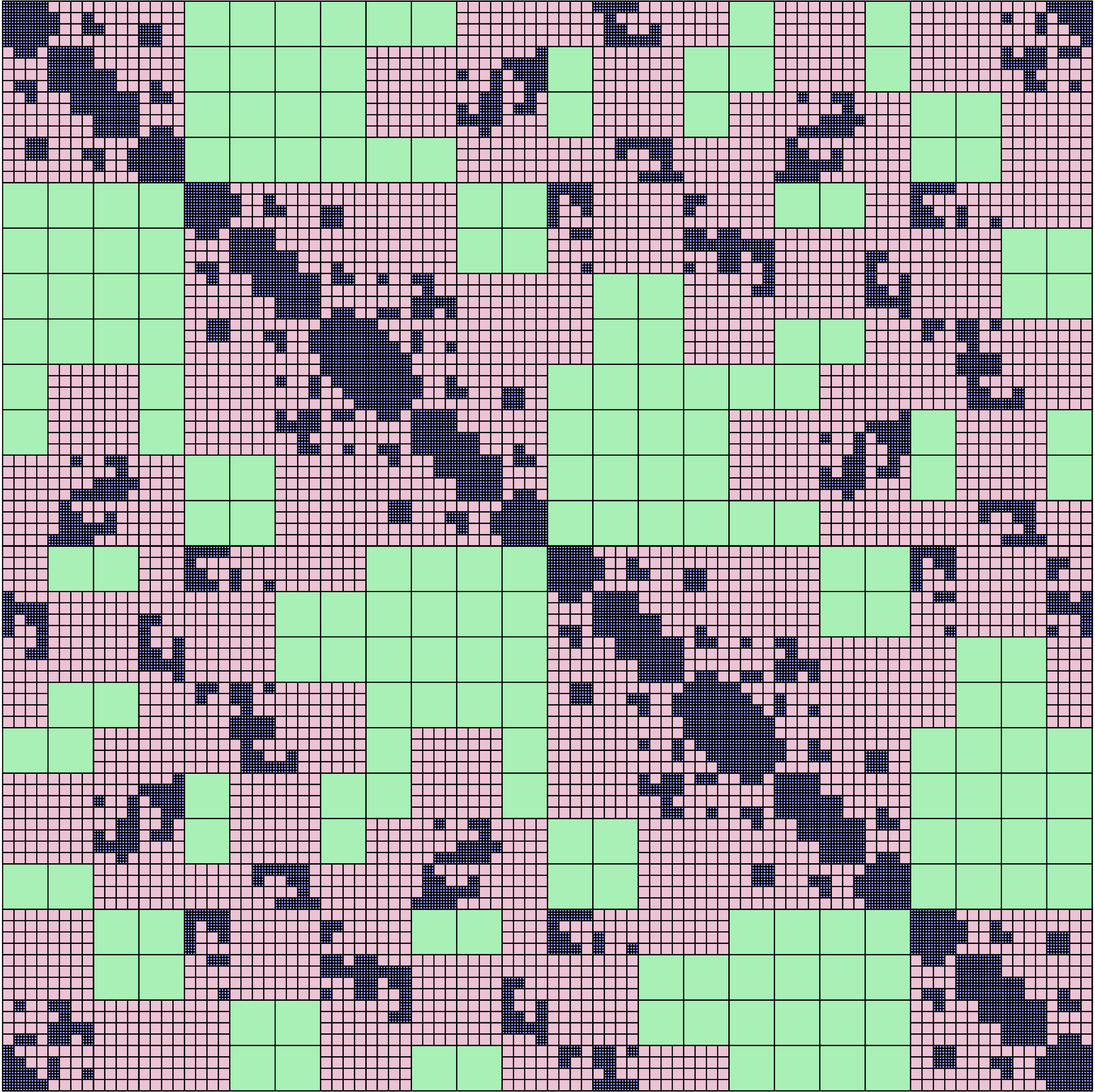}
\caption{\label{fig:Hmatrix}Illustration of the $\mathcal{H}^2$-matrix
partitioning. All but the very smallest blocks are contained in the farfield
and will be compressed by the fast multipole method.}
\end{figure}
The \emph{block-cluster tree} \(\mathcal{B}:=\mathcal{F}
\cup\mathcal{N}\) can be constructed by Algorithm\
\ref{alg:constructblockclustertree}. We remark that for all block-clusters
$\Gamma_{\bb\lambda}\times\Gamma_{\bb\lambda'}\in\mathcal{B}$, it holds
$|\bb\lambda|=|\bb\lambda'|$ and refer to \cite{Dolz_2016aa,Harbrecht_2013ab}
for an in-depth discussion about the special properties of the block-cluster tree
in the isogeometric setting.

\begin{algorithm}[htb]
	\caption{Construction of the block-cluster tree \(\mathcal{B}\)}
	\label{alg:constructblockclustertree}
	\begin{algorithmic}
		\Procedure{BuildBlockClusterTree}{cluster $\Gamma_{\bb\lambda},\Gamma_{\bb\lambda'}$}
		\If {\((\Gamma_{\bb\lambda},\Gamma_{\bb\lambda'})\) is admissible}
		\State $\operatorname{sons}(\Gamma_{\bb\lambda}\times\Gamma_{\bb\lambda'}):=\emptyset$
		\Else
		\State $\operatorname{sons}(\Gamma_{\bb\lambda}\times\Gamma_{\bb\lambda'}):=
		\{\Gamma_{\bb\mu}\times\Gamma_{\bb\mu'}\colon\bb\mu\in\operatorname{sons}(\bb\lambda),\bb\mu'\in\operatorname{sons}(\bb\lambda')\}$
		\For {$\bb\mu\in\operatorname{sons}(\bb\lambda),\bb\mu'\in\operatorname{sons}(\bb\lambda')$}
		\State \Call{BuildBlockClusterTree}{$\Gamma_{\bb\mu}$,$\Gamma_{\bb\mu'}$}
		\EndFor
		\EndIf
		\EndProcedure
	\end{algorithmic}
\end{algorithm}

For a given polynomial degree \(q\in\mathbb{N}\), let
\(\{x_0,x_1,\ldots,x_q\}\subset[0,1]\) denote \(q+1\) interpolation 
points. Furthermore, let \(L_m(s)\) for \(m=0,\ldots,q\) be the 
Lagrangian basis polynomials with respect to these interpolation 
points. By a tensor product construction, one obtains the interpolation 
points \({\bb x}_{\bb m}:=(x_{m_1}, x_{m_2})\) and the corresponding 
tensor product basis polynomials \(L_{\bb m}({\bb s}):= L_{m_1}
(s_1)\cdot L_{m_2}(s_2)\)  for \(m_1,m_2=0,\ldots,q\).
In all admissible 
blocks \(\Gamma_{\bb\lambda}\times\Gamma_{\bb\lambda^\prime}\in\mathcal{F}\), 
this gives rise to the approximation
\[
G_{\kappa,\bb\lambda,\bb\lambda^\prime}({\bb s},{\bb t})\approx
\sum\limits_{\substack{\|{\bb m}\|_\infty\leq q,\\\|{\bb m}^\prime\|_\infty\leq q}}
G_{\kappa,\bb\lambda,\bb\lambda^\prime}({\bb x}_{\bb m},{\bb x}_{{\bb m}^\prime})
L_{\bb m}({\bb s})L_{{\bb m}^\prime}({\bb t})
\eqqcolon
\tilde{G}_{\kappa,\bb\lambda,\bb\lambda^\prime}^{(q)}({\bb s},{\bb t}).
\]
We remark that the approach presented here interpolates the 
localized kernel \eqref{eq:lockernel} via polynomials on the reference domain $\square$
of the isogeometric mappings
rather than the original kernel in space, as first introduced
in \cite{Giebermann_2001aa,Hackbusch_2002aa}. We will see that
this will lead to a complexity of $q^2$ in terms of the interpolation
degree of the compression, rather than $q^3$.

Including the geometry information into the kernel evaluation yields
\begin{align}\label{eq:twomatrices}
\bb V_{\kappa,h}^{(\alpha,\beta)}\Big|_{\bb\lambda,\bb\lambda'}
=
\bb V_{\kappa,h,1}^{(\alpha,\beta)}\Big|_{\bb\lambda,\bb\lambda'}
+
\bb V_{\kappa,h,2}^{(\alpha,\beta)}\Big|_{\bb\lambda,\bb\lambda'}
\end{align}
with
\begin{align*}
&\Big[\bb V_{\kappa,h,1}^{(\alpha,\beta)}\Big|_{\bb\lambda,\bb\lambda'}\Big]_{\ell,\ell'}\\
&{}\qquad=\int_\square\int_\square G_{\kappa,\bb\lambda,\bb\lambda}(\bb s,\bb t)
\langle\partial_\alpha\bb F_{\bb\lambda}(\bb s),\partial_\beta\bb F_{\bb\lambda}(\bb t)\rangle\hat{\varphi}_{\ell'}(\bb s)\hat{\varphi}_\ell(\bb t)\dd\bb t\dd\bb s\\
&{}\qquad\approx
\sum\limits_{\substack{\|{\bb m}\|_\infty\leq q,\\\|{\bb m}^\prime\|_\infty\leq q}}
G_{\kappa,\bb\lambda,\bb\lambda^\prime}({\bb x}_{\bb m},{\bb x}_{{\bb m}^\prime})\langle\partial_\alpha\bb F_{\bb\lambda}(\bb x_{\bb m}),\partial_\beta\bb F_{\bb\lambda}(\bb x_{\bb m'})\rangle\\[-.5cm]
&{}\qquad\qquad\qquad\qquad\qquad\qquad\qquad\cdot
\int_\square
L_{\bb m}({\bb s})\hat{\varphi}_{\ell '}(\bb s)\dd\bb s
\int_\square L_{{\bb m}^\prime}({\bb t}) \hat{\varphi}_\ell(\bb t)\dd\bb t
\end{align*}
for two basis functions \(\hat{\varphi}_\ell,\hat{\varphi}_{\ell^\prime}
\in\S_{p,J-|\bb\lambda|}^*(\square)\). We thus have
the representation
\[
\Big[\bb V_{\kappa,h,1}^{(\alpha,\beta)}\Big|_{\bb\lambda,\bb\lambda'}\Big]_{\ell,\ell'}
=\big[{\bb M}_{|\bb\lambda|}^\square{\bb K}_{\bb\lambda,\bb\lambda^\prime,1}^{(\alpha,\beta)}
({\bb M}_{|\bb\lambda^\prime|}^\square)^\intercal\big]_{\ell,\ell^\prime},
\]
where
\[
\Big[{\bb K}_{\bb\lambda,\bb\lambda^\prime,1}^{(\alpha,\beta)}\Big]_{\bb m,\bb m'}
=
G_{\kappa,\bb\lambda,\bb\lambda^\prime}({\bb x}_{\bb m},{\bb x}_{{\bb m}^\prime})\langle\partial_\alpha\bb F_{\bb\lambda}(\bb x_{\bb m}),\partial_\beta\bb F_{\bb\lambda}(\bb x_{\bb m'})\rangle
\]
and
\begin{align*}
\big[{\bb M}_{|\bb\lambda|}\big] _{m_1,\ell}
={}&{}\int_0^1L_{m_1}(s_1)\hat{\phi}_\ell(s_1)\dd s_1,\quad\hat{\phi}_\ell\in\S_{p,0}^*([0,1]),\\
{\bb M}_{|\bb\lambda|}^{\square}
={}&{}{\bb M}_{|\bb\lambda|}\!\otimes\!{\bb M}_{|\bb\lambda|}.
\end{align*}

For the second term in \eqref{eq:twomatrices} we obtain
\begin{align*}
&\Big[\bb V_{\kappa,h,2}^{(\alpha,\beta)}\Big|_{\bb\lambda,\bb\lambda'}\Big]_{\ell,\ell'}\\
&{}\qquad=-\frac{1}{\kappa ^2}\int_\square\int_\square G_{\kappa,\bb\lambda,\bb\lambda}(\bb s,\bb t)
\partial_{\alpha}\hat{\varphi}_{\ell'}(\bb s)\partial_{\beta}\hat{\varphi}_\ell(\bb t)\dd\bb t\dd\bb s\\
&{}\qquad\approx
\sum\limits_{\substack{\|{\bb m}\|_\infty\leq q,\\\|{\bb m}^\prime\|_\infty\leq q}}
-\frac{1}{\kappa ^2}
G_{\kappa,\bb\lambda,\bb\lambda^\prime}({\bb x}_{\bb m},{\bb x}_{{\bb m}^\prime})
\int_\square
L_{\bb m}({\bb s})\partial_{\alpha}\hat{\varphi}_{\ell '}(\bb s)\dd\bb s
\int_\square L_{{\bb m}^\prime}({\bb t})\partial_{\beta}\hat{\varphi}_\ell(\bb t)\dd\bb t,
\end{align*}
which amounts to the representation
\[
\Big[\bb V_{\kappa,h,2}^{(\alpha,\beta)}\Big|_{\bb\lambda,\bb\lambda'}\Big]_{\ell,\ell'}
=\big[{\bb M}_{|\bb\lambda|}^{\alpha,\square}{\bb K}_{\bb\lambda,\bb\lambda^\prime,2}^{(\alpha,\beta)}
({\bb M}_{|\bb\lambda^\prime|}^{\beta,\square})^\intercal\big]_{\ell,\ell^\prime},
\]
with
\begin{align*}
\big[{\bb M}_{|\bb\lambda|}^{\partial}\big] _{m,\ell}
={}&\int_0^1L_{m}(s)\partial\hat{\phi}_\ell(s)\dd s,\quad\hat{\phi}_\ell\in\S_{p,0}^*([0,1]),\\
{\bb M}_{|\bb\lambda|}^{1,\square}
={}&{\bb M}_{|\bb\lambda|}^\partial\!\otimes\!{\bb M}_{|\bb\lambda|},\\
{\bb M}_{|\bb\lambda|}^{2,\square}
={}&{\bb M}_{|\bb\lambda|}\!\otimes\!{\bb M}_{|\bb\lambda|}^\partial,
\end{align*}
and
\[
\Big[{\bb K}_{\bb\lambda,\bb\lambda^\prime,2}^{(\alpha,\beta)}\Big]_{\bb m,\bb m'}
=
-\frac{1}{\kappa^2}G_{\kappa,\bb\lambda,\bb\lambda^\prime}({\bb x}_{\bb m},{\bb x}_{{\bb m}^\prime}).
\]
In view of \eqref{eq:twomatrices}, this yields the low-rank representation
\begin{align}\label{eq:FMMlr}
\bb V_{\kappa,h}^{(\alpha,\beta)}\Big|_{\bb\lambda,\bb\lambda'}\approx
\begin{bmatrix}
{\bb M}_{|\lambda|}^{\square} & {\bb M}_{|\lambda|}^{\alpha,\square}
\end{bmatrix}
\begin{bmatrix}
{\bb K}_{\bb\lambda,\bb\lambda^\prime,1}^{(\alpha,\beta)} &  \\
& {\bb K}_{\bb\lambda,\bb\lambda^\prime,2}^{(\alpha,\beta)}
\end{bmatrix}
\begin{bmatrix}
\big({\bb M}_{|\lambda|}^{\square}\big)^\intercal \\
\big({\bb M}_{|\lambda|}^{\beta,\square}\big)^\intercal
\end{bmatrix},
\end{align}
for the matrices \eqref{eq:Vkh} in all admissible matrix blocks,
see also Figure~\ref{fig:FMM} for an illustration.

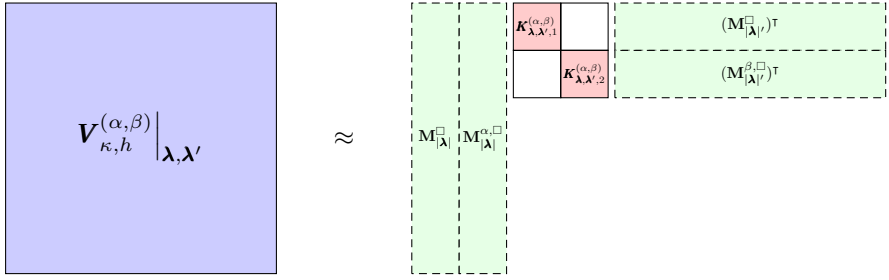
\begin{figure}
\centering
\begin{tikzpicture}[scale=.9]
\draw[fill=blue!20] (-2,-2) rectangle (2,2);
\draw (0, 0) node {$\bb V_{\kappa,h}^{(\alpha,\beta)}\Big|_{\bb\lambda,\bb\lambda'}$};
\draw (3,0) node {$\approx$};
\draw[fill=green!10,densely dashed] (4,-2) rectangle (4.7,2);
\draw (4.35,0) node {\scalebox{0.6}{${\bf M}_{|\bb\lambda|}^\square$}};
\draw[fill=green!10,densely dashed] (4.7,-2) rectangle (5.4,2);
\draw (5.05,0) node {\scalebox{0.6}{${\bf M}_{|\bb\lambda|}^{\alpha,\square}$}};
\draw (5.5,0.6) rectangle (6.2,1.3);
\draw[fill=red!20] (6.2,0.6) rectangle (6.9,1.3);
\draw (6.55,0.95) node {\scalebox{0.5}{${\bb K}_{\bb\lambda,\bb\lambda^\prime,2}^{(\alpha,\beta)}$}};
\draw[fill=red!20] (5.5,1.3) rectangle (6.2,2);
\draw (5.85,1.65) node {\scalebox{0.5}{${\bb K}_{\bb\lambda,\bb\lambda^\prime,1}^{(\alpha,\beta)}$}};
\draw (6.2,1.3) rectangle (6.9,2);
\draw (6.2,0.6) -- (6.2,2);
\draw (5.5,1.3) -- (6.9,1.3);
\draw[fill=green!10,densely dashed] (7,1.3) rectangle (11,2);
\draw (9,1.65) node {\scalebox{0.6}{$({\bf M}_{|\bb\lambda|'}^\square)^\intercal$}};
\draw[fill=green!10,densely dashed] (7,0.6) rectangle (11,1.3);
\draw (9,0.95) node {\scalebox{0.6}{$({\bf M}_{|\bb\lambda|'}^{\beta,\square})^\intercal$}};
\end{tikzpicture}
\caption{\label{fig:FMM}Illustration of the storage savings for
an admissible block
$\bb V_{\kappa,h}^{(\alpha,\beta)}\big|_{\bb\lambda,\bb\lambda'}$ compressed by the fast multipole
method. When using the efficient $\mathcal{H}^2$-variant, ${\bf M}_{|\bb\lambda|}^\square$, 
${\bf M}_{|\bb\lambda|}^{\beta,\square}$, and ${\bf M}_{|\bb\lambda|}^{\beta,\square}$ can
be efficiently represented by recurrence relations such that assembly, storage and application become negligible.}
\end{figure}

We remark that this representation is within the same framework
as it was used for the treatment of the hypersingular operator
for the Laplace equation in \cite{Dolz_2016aa}. Therefore all
considerations made in \cite{Dolz_2016aa} also apply for our setting
here. In particular, there hold the following complexity results,
which amount to a linear scaling w.r.t.~the number of elements.
\begin{theorem}
Let $N$ denote the number of patches and $m$ the level of refinement.
The storage consumption of the compressed matrix has a complexity of
$\mathcal{O}(N\cdot 4^m(pq)^2)$.
Moreover, the matrix-vector multiplication has also a complexity of
$\mathcal{O}(N\cdot 4^m(pq)^2)$, if its fast $\mathcal{H}^2$-variant is
used.
\end{theorem}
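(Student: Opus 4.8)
The plan is to recognize that the block-wise representation \eqref{eq:FMMlr} is, verbatim, an $\mathcal{H}^2$-matrix, and then to invoke the general storage and complexity estimates of the $\mathcal{H}^2$-framework \cite{Borm_2010aa} as they were instantiated for the isogeometric setting in \cite{Dolz_2016aa}. Concretely, in every admissible block the central factor of \eqref{eq:FMMlr} plays the role of the coupling (far-field) matrix, the outer factors $\begin{bmatrix}{\bb M}_{|\bb\lambda|}^{\square}&{\bb M}_{|\bb\lambda|}^{\alpha,\square}\end{bmatrix}$ serve as cluster bases, and the near-field $\mathcal{N}$ is assembled directly. It therefore suffices to check that this data meets the structural hypotheses from which the $\mathcal{H}^2$ estimates follow, and then to read off the constants.

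First I would record the combinatorics of the cluster tree. The uniform dyadic refinement of each patch (cf.~Figure~\ref{fig::refinement}) makes $\mathcal{T}$ a quadtree of depth $m$ with $\sum_{j=0}^{m}N4^{j}=\mathcal{O}(N4^{m})$ nodes and exactly $N4^{m}$ leaves, each leaf carrying $(p+1)^2$ degrees of freedom per scalar component; only a constant number (four) of such component blocks $\bb V_{\kappa,h}^{(\alpha,\beta)}$ occurs. Next I would invoke the bounded sparsity of the block-cluster tree $\mathcal{B}$: under the admissibility condition \eqref{eq:admissibility} each cluster participates in at most a constant number $C_{\mathrm{sp}}$ of blocks, uniformly in $N$, $m$, $p$, and $q$. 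This geometric fact is what renders $\#\mathcal{F}$ and $\#\mathcal{N}$ linear in the number of clusters, and it is established for the isogeometric quadtree in \cite{Dolz_2016aa,Harbrecht_2013ab}.

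The decisive point is that the cluster bases are \emph{nested} and of bounded rank. Because the interpolation nodes ${\bb x}_{\bb m}$ live on the fixed reference square $\square$ and refine consistently under subdivision, the matrices ${\bb M}_{|\bb\lambda|}^{\square}$ and ${\bb M}_{|\bb\lambda|}^{\alpha,\square}$ depend only on the level $|\bb\lambda|$; passing from a cluster to its four sons is realized by level-independent transfer matrices, so the augmented basis $\begin{bmatrix}{\bb M}_{|\bb\lambda|}^{\square}&{\bb M}_{|\bb\lambda|}^{\alpha,\square}\end{bmatrix}$ is genuinely nested in the $\mathcal{H}^2$ sense. Its rank is $2(q+1)^2=\mathcal{O}(q^2)$, and this is exactly the place where interpolating the \emph{localized} kernel \eqref{eq:lockernel} on the two-dimensional $\square$, rather than the kernel in space, yields a $q^2$- instead of a $q^3$-dependence. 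With the sparsity of $\mathcal{B}$, the nestedness of the bases, and the rank bound $\mathcal{O}(q^2)$ in hand, the standard $\mathcal{H}^2$ storage estimate of \cite{Borm_2010aa} gives a far-field cost proportional to (rank)$\times$(number of degrees of freedom), i.e.~$\mathcal{O}\big(q^2\cdot N4^m p^2\big)=\mathcal{O}\big(N4^m(pq)^2\big)$; together with the analogous near-field estimate from \cite{Dolz_2016aa} and the constant factor from summing over the component blocks, this is the asserted storage bound. The matrix–vector product then inherits the same bound, since the $\mathcal{H}^2$ multiplication consists of an upward (forward) transformation through the transfer matrices, a block-diagonal multiplication by the coupling matrices, a downward (backward) transformation, and a direct near-field evaluation, each of cost proportional to the storage.

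The main obstacle, and the only genuinely new work relative to \cite{Dolz_2016aa}, is verifying that the features specific to the electric single layer operator do not spoil the $\mathcal{H}^2$ structure: the geometry derivatives $\langle\partial_\alpha\bb F_{\bb\lambda},\partial_\beta\bb F_{\bb\lambda}\rangle$ absorbed into ${\bb K}_{\bb\lambda,\bb\lambda',1}^{(\alpha,\beta)}$, the surface-divergence term producing the second summand in \eqref{eq:twomatrices} with its own basis ${\bb M}_{|\bb\lambda|}^{\alpha,\square}$, and the splitting into the four component blocks. The reformulation \eqref{eq:FMMlr} is arranged precisely so that each of these is packaged into either a still level-dependent (hence nested) cluster basis or into a coupling matrix of rank $\mathcal{O}(q^2)$; once this packaging is confirmed, every combinatorial and algorithmic estimate of \cite{Dolz_2016aa} transfers without change, which is exactly what the statement claims.
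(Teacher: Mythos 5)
Your proposal is correct and follows essentially the same route as the paper, which offers no explicit proof beyond the remark that the representation \eqref{eq:FMMlr} lies within the framework of \cite{Dolz_2016aa} so that all complexity considerations made there carry over. You simply make explicit the ingredients that remark relies on (quadtree combinatorics, bounded sparsity of $\mathcal{B}$, nestedness and $\mathcal{O}(q^2)$ rank of the level-dependent cluster bases) and correctly identify the only genuinely new verification, namely that the geometry derivatives, the surface-divergence term, and the four component blocks are packaged by \eqref{eq:FMMlr} without spoiling the $\mathcal{H}^2$ structure.
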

\begin{remark}
We stress that the introduced compression scheme has an intrinsic
$\mathcal{H}^2$-structure, which is more efficient than
the frequently used $\mathcal{H}$-matrix structure. Its efficiency
is based on the fact that, for each admissible block
$\bb\lambda\times\bb\lambda'$, there are
only $q^2$ evaluations of the geometry and the kernel function
required to assemble the matrices
${\bb K}_{\bb\lambda,\bb\lambda^\prime,1}^{(\alpha,\beta)}$
and
${\bb K}_{\bb\lambda,\bb\lambda^\prime,2}^{(\alpha,\beta)}$.
The other required matrices from \eqref{eq:FMMlr} can be efficiently
represented by recurrence relations from smaller matrices with
tensor product structure such that assembly, storage and application {do not affect the asymptotic behaviour},
see \cite{Dolz_2016aa}.
\end{remark}

\subsection{Error Analysis Of the Compression Scheme}\label{sec:FMMerror}
The interpolation of the fundamental solution for the compression of the
system matrix introduces an error in the system matrix and, thus, an
error in the numerical solution. Since this error depends on the degree of
the interpolation $q$, this section is dedicated to a suitable
error analysis.
The main application of the following theorem is to bound the approximation error of the
bilinear form in a general form of Strang's first lemma \cite[Thm.~4.2.11]{SS11}.
A direct consequence is that the compression scheme is able to maintain the
convergence rate predicted by Theorem~\ref{thm::quasioptimality}, if the
polynomial degree for the compression is properly chosen.
\begin{theorem}[Error of the Bilinear Form]\label{thm::mutlipole}
	Let $\sigma > 0$ be arbitrary but fixed and denote by $m$ the number
	of uniform refinement steps of $\square$. Then, for the electric
	single layer operator $\MSL_{\kappa,q}$ which results from an interpolation of 
	degree \(q>0\) of the kernel function in every admissible block
	and the exact representation of the kernel in all other blocks, 
	there holds
	\begin{align}\label{eq:bilinearerror}
	\big|\langle \MSL_k\bb u,\bb v\rangle_{\times}-\langle\MSL_{k,q} \bb u, \bb v\rangle_{\times}\big|
	\lesssim 2^{-m\sigma}\|\bb u\|_{\bb H^0(\div_{\Gamma},\Gamma)}\|\bb v\|_{\bb H^0(\div_{\Gamma},\Gamma)},
	\end{align}
	provided that \(q\sim (\sigma+1)m\).
\end{theorem}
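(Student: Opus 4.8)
The plan is to reduce the error in the bilinear form to a sum over the far-field $\mathcal F$ and to control each summand by the interpolation error of the \emph{localized} kernel. Since $G_\kappa$ is represented exactly on the near-field, only admissible blocks contribute, and working directly with the continuous operator in physical variables (so that the geometry factors $\partial_\alpha\bb F_{\bb\lambda}$ appearing in \eqref{eq:Vkh} are absorbed into the physical densities and the sole source of error is the replacement of $G_\kappa$ by its blockwise interpolant) one obtains
\[
\langle\MSL_\kappa\bb u,\bb v\rangle_\times-\langle\MSL_{\kappa,q}\bb u,\bb v\rangle_\times
=\sum_{(\bb\lambda,\bb\lambda')\in\mathcal F}\int_{\Gamma_{\bb\lambda}}\!\int_{\Gamma_{\bb\lambda'}}
\big(G_\kappa-\tilde G^{(q)}_{\kappa,\bb\lambda,\bb\lambda'}\big)
\Big(\bb u\cdot\bb v-\tfrac{1}{\kappa^2}\div_\Gamma\bb u\,\div_\Gamma\bb v\Big)\dd\sigma_{\bb y}\dd\sigma_{\bb x}.
\]
I would treat the $\bb L^2$-term and the $\div_\Gamma$-term separately, since they are structurally identical up to the factor $\kappa^{-2}$ and the substitution $\bb u\mapsto\div_\Gamma\bb u$; this is also what lets the two halves of the estimate recombine into the $\bb H^0(\div_\Gamma,\Gamma)$-norms on the right-hand side.

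For a single admissible block, Cauchy--Schwarz gives the bound $\norm{G_\kappa-\tilde G^{(q)}_{\kappa,\bb\lambda,\bb\lambda'}}_{L^2(\Gamma_{\bb\lambda}\times\Gamma_{\bb\lambda'})}\norm{\bb u}_{\bb L^2(\Gamma_{\bb\lambda'})}\norm{\bb v}_{\bb L^2(\Gamma_{\bb\lambda})}$, so everything hinges on the kernel interpolation error. On an admissible block the images $\bb F_{\bb\lambda}(\bb s)$ and $\bb F_{\bb\lambda'}(\bb t)$ stay separated by $\gtrsim\dist(\Gamma_{\bb\lambda},\Gamma_{\bb\lambda'})$, and since $\bb F_{\bb\lambda}$ scales the reference variables by $\diam(\Gamma_{\bb\lambda})\simeq 2^{-\abs{\bb\lambda}}$, the localized kernel extends holomorphically to a complex neighbourhood of $\square\times\square$ whose reference radius is $\gtrsim\dist/\diam\geq\eta^{-1}$ by the admissibility condition \eqref{eq:admissibility}. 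Classical tensor-product interpolation theory for such functions then yields geometric convergence,
\[
\norm{G_\kappa-\tilde G^{(q)}_{\kappa,\bb\lambda,\bb\lambda'}}_{L^\infty(\square\times\square)}
\lesssim \dist(\Gamma_{\bb\lambda},\Gamma_{\bb\lambda'})^{-1}\rho^{-q},\qquad \rho=\rho(\eta)>1,
\]
where $\rho$ depends only on $\eta$ and is therefore uniform over the levels, and the prefactor $\dist^{-1}$ is the size of $G_\kappa$ on the block; the oscillation $e^{i\kappa r}$ merely shifts the holomorphy domain by an amount controlled by $\kappa$, which is harmless for fixed $\kappa$ as $\diam\to 0$. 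Transforming back to physical variables via $\tau_{\bb\lambda}\tau_{\bb\lambda'}\simeq 2^{-4\abs{\bb\lambda}}$ and using $\dist\gtrsim\eta^{-1}2^{-\abs{\bb\lambda}}$ turns this into an $L^2$-kernel error of order $2^{-\abs{\bb\lambda}}\rho^{-q}$ on a level-$\abs{\bb\lambda}$ block. This is exactly the scalar estimate of \cite{Dolz_2016aa}, which I would import componentwise for the four matrices $\bb V^{(\alpha,\beta)}_{\kappa,h}$ of \eqref{eq:Vkh}.

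It then remains to sum over $\mathcal F$. At a fixed level $j$ the clusters $\{\Gamma_{\bb\lambda}:\abs{\bb\lambda}=j\}$ are disjoint and cover $\Gamma$, and the block-cluster tree has bounded sparsity, so Cauchy--Schwarz over the blocks collapses the local factors into global ones,
\[
\sum_{\substack{(\bb\lambda,\bb\lambda')\in\mathcal F\\\abs{\bb\lambda}=j}}\norm{\bb u}_{\bb L^2(\Gamma_{\bb\lambda'})}\norm{\bb v}_{\bb L^2(\Gamma_{\bb\lambda})}
\lesssim\norm{\bb u}_{\bb L^2(\Gamma)}\norm{\bb v}_{\bb L^2(\Gamma)},
\]
and likewise with $\bb u,\bb v$ replaced by $\div_\Gamma\bb u,\div_\Gamma\bb v$. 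Combining the $\bb L^2$- and $\div_\Gamma$-contributions reassembles the $\bb H^0(\div_\Gamma,\Gamma)$-norms, and summing the level-$j$ weights $2^{-j}\rho^{-q}$ over $0\leq j\leq m$ leaves only a factor $P(m)$ that is at most algebraic in the tree depth $m$, giving
\[
\big|\langle\MSL_\kappa\bb u,\bb v\rangle_\times-\langle\MSL_{\kappa,q}\bb u,\bb v\rangle_\times\big|
\lesssim P(m)\,\rho^{-q}\,\norm{\bb u}_{\bb H^0(\div_\Gamma,\Gamma)}\norm{\bb v}_{\bb H^0(\div_\Gamma,\Gamma)}.
\]
Choosing $q\sim(\sigma+1)m$ then makes $P(m)\rho^{-q}=P(m)\rho^{-(\sigma+1)m}\lesssim 2^{-m\sigma}$, the surplus unit in the slope $(\sigma+1)$ versus $\sigma$ comfortably absorbing both the $\eta$-dependent constant hidden in $\rho$ and the depth factor $P(m)$; this is the asserted bound. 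The main obstacle is precisely the blockwise kernel estimate: making the holomorphy of the localized Helmholtz kernel quantitative with a Bernstein parameter $\rho$ that is \emph{independent of the refinement level} and controlling the oscillatory factor uniformly in $0\leq j\leq m$. Once that scalar estimate is secured---which is the content borrowed from \cite{Dolz_2016aa}---the vector-valued structure and the surface-divergence term add only bookkeeping, and the summation over the sparse block-cluster tree is routine.
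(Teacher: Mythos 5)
Your argument is correct and follows essentially the same route as the paper, whose proof consists of a single sentence deferring to the analogous result \cite[Thm.~5.6]{Harbrecht_2013ab} ``applied separately to both summands of the electric single layer operator'' --- precisely the far-field decomposition, blockwise geometric interpolation estimate for the localized kernel, and sparse tree summation that you spell out. Your write-up simply makes explicit the details the paper outsources to the reference, including the separate treatment of the $\bb L^2$- and $\div_\Gamma$-contributions that recombine into the $\bb H^0(\div_\Gamma,\Gamma)$-norms.
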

\begin{proof}
	The proof is analogous to the proof of \cite[Thm.~5.6]{Harbrecht_2013ab}, applied separately
	to both summands of the electric single layer operator.
\end{proof}
To apply the previous theorem in Strang's first lemma, an additional inverse
estimate of the kind
\[
\|\bb u_h\|_{\bb H^0(\div_{\Gamma},\Gamma)}\lesssim h^{-1/2}\|\bb u_h\|_{\chispace}
\]
on the trial spaces is required. For patchwise continuous spline spaces
$\bb\S _{\bb p,\bb\Xi}^1(\Gamma)$ we provide such an estimate in
Lemma~\ref{lem:inverseestimate}, but we stress that the error analysis is also valid
for other trial spaces providing such an estimate.

We summarize our error analysis in the following theorem, which is a consequence
of the considerations in this section and \cite[Thm.~4.2.11]{SS11}.

\begin{theorem}\label{thm:FMMeverything}
The presented compression scheme maintains the existence and uniqueness of solutions of the numerical scheme. Moreover, {there exists $q_0>0$ such that} the optimal convergence rate of Theorem~\ref{thm::quasioptimality} is maintained if one chooses $q\sim(s+5/2)m$ {and $q\geq q_0$}.
\end{theorem}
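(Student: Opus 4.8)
The plan is to regard the compressed scheme as a perturbation of the inf-sup stable Galerkin problem \eqref{problem::variational::disc} and to combine the kernel-interpolation bound of Theorem~\ref{thm::mutlipole}, the inverse estimate of Lemma~\ref{lem:inverseestimate}, and the inf-sup version of Strang's first lemma \cite[Thm.~4.2.11]{SS11}. Throughout I would exploit that $m$ steps of uniform refinement give $h\simeq 2^{-m}$, so that the bound of Theorem~\ref{thm::mutlipole} reads $2^{-m\sigma}\simeq h^{\sigma}$. Write $a(\bb u_h,\bb v_h)\coloneqq\langle\MSL_\kappa\bb u_h,\bb v_h\rangle_\times$ and $a_q(\bb u_h,\bb v_h)\coloneqq\langle\MSL_{\kappa,q}\bb u_h,\bb v_h\rangle_\times$ for the exact and the compressed bilinear form, and let $\bb w_{h,q}\in\bb\S^1_{\bb p,\bb\Xi}(\Gamma)$ denote the solution of the compressed scheme; note that the right-hand side $\bb f_h$ does not involve $G_\kappa$ and is therefore left untouched by the compression.

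First I would establish inf-sup stability of $a_q$ on $\bb\S^1_{\bb p,\bb\Xi}(\Gamma)$. Theorem~\ref{lem::disc_inf-sup} furnishes a constant $\beta>0$, uniform for $h$ small, with $\sup_{\bb v_h}|a(\bb u_h,\bb v_h)|/\|\bb v_h\|_\chispace\geq\beta\|\bb u_h\|_\chispace$. For the defect I would apply Theorem~\ref{thm::mutlipole} to each summand and then the inverse estimate of Lemma~\ref{lem:inverseestimate} in both arguments,
\[
|a(\bb u_h,\bb v_h)-a_q(\bb u_h,\bb v_h)|\lesssim 2^{-m\sigma}\|\bb u_h\|_{\bb H^0(\div_{\Gamma},\Gamma)}\|\bb v_h\|_{\bb H^0(\div_{\Gamma},\Gamma)}\lesssim 2^{-m\sigma}h^{-1}\|\bb u_h\|_\chispace\|\bb v_h\|_\chispace\simeq h^{\sigma-1}\|\bb u_h\|_\chispace\|\bb v_h\|_\chispace.
\]
As long as $\sigma>1$ this tends to zero with $h$, so there is an $m$-threshold---equivalently, via $q\sim(\sigma+1)m$, a $q_0$---beyond which the defect is bounded by $\beta/2$. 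A reverse triangle inequality then gives $\sup_{\bb v_h}|a_q(\bb u_h,\bb v_h)|/\|\bb v_h\|_\chispace\geq(\beta/2)\|\bb u_h\|_\chispace$. Since the system is square and finite dimensional, this injectivity-type bound already yields invertibility, hence existence and uniqueness for the compressed scheme.

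Second I would run Strang's first lemma with this stability constant. Picking as comparison element $\bb u_h$ the projection of Theorem~\ref{thm::hdiv} and using $a_q(\bb w_{h,q},\bb v_h)=\langle\bb\gamma_t^+\bb e_i,\bb v_h\rangle_\times=a(\bb w,\bb v_h)$ for $\bb v_h\in\bb\S^1_{\bb p,\bb\Xi}(\Gamma)$, one obtains
\[
\|\bb w-\bb w_{h,q}\|_\chispace\lesssim\|\bb w-\bb u_h\|_\chispace+\sup_{\bb v_h\neq 0}\frac{|a(\bb u_h,\bb v_h)-a_q(\bb u_h,\bb v_h)|}{\|\bb v_h\|_\chispace}.
\]
The first term is $\lesssim h^{s+1/2}\|\bb w\|_{\tilde{\bb H}^{s}(\div_\Gamma,\Gamma)}$ by Theorems~\ref{thm::hdiv} and \ref{thm::quasioptimality}, while the defect estimate of the first step together with $\|\bb u_h\|_\chispace\lesssim\|\bb w\|_{\tilde{\bb H}^{s}(\div_\Gamma,\Gamma)}$ bounds the second by $h^{\sigma-1}\|\bb w\|_{\tilde{\bb H}^{s}(\div_\Gamma,\Gamma)}$. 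Balancing the two exponents via $\sigma-1=s+1/2$, i.e.\ $\sigma=s+3/2$, recovers the optimal rate $h^{s+1/2}$, and translating through $q\sim(\sigma+1)m$ yields exactly the prescription $q\sim(s+5/2)m$; the earlier threshold supplies the condition $q\geq q_0$.

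The hard part is the first step: the inverse estimate costs a factor $h^{-1/2}$ in each argument, doubling to $h^{-1}\simeq 2^m$, so the crude kernel bound $2^{-m\sigma}$ must overcome this growth. This is what forces $\sigma>1$ and makes the careful bookkeeping of the half-integer exponents---and the uniformity in $h$ of the inf-sup constant $\beta$ from Theorem~\ref{lem::disc_inf-sup}---the delicate points; once $\sigma=s+3/2$ is identified, the remaining estimates are direct applications of the cited results.
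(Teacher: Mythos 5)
Your proposal is correct and follows essentially the same route the paper intends: the paper states Theorem~\ref{thm:FMMeverything} as a direct consequence of Theorem~\ref{thm::mutlipole}, the inverse estimate of Lemma~\ref{lem:inverseestimate}, and Strang's first lemma \cite[Thm.~4.2.11]{SS11}, and your write-up supplies exactly that argument, including the correct bookkeeping $2^{-m\sigma}h^{-1}\simeq h^{\sigma-1}$, the balancing $\sigma-1=s+1/2$ leading to $q\sim(\sigma+1)m=(s+5/2)m$, and the perturbation argument for preserving the discrete $\inf$-$\sup$ constant, which is where the threshold $q\geq q_0$ originates.
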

\section{Numerical Examples}\label{sec::num}

A commodity
of fast boundary element methods is that they all rely on iterative solvers
and, thus, they are likely to struggle with high condition numbers caused
by a large {ratio} of wave number to geometry diameter. Thus, for comparison to other
methods, we will indicate both for our test cases. The arising systems are solved via a complex GMRES,
without the application of preconditioners, since a {discussion} of preconditioning would be beyond the scope
of this paper.

The geometry evaluation incorporates Bézier extraction for
efficient geometry evaluations. Matrix assembly, matrix-vector multiplication
and potential evaluation are parallelized via OpenMP \cite{OMPARB_2008aa}.
{The implementation is publicly available under the GNU GPLv3 license \cite{bembel}.}

\subsection{Mie Scattering}\label{numsec::miesphere}
First, we test the implementation via the computation of the surface current induced by a plane wave from a unit sphere. Here, an analytic solution to the density is known in terms of a series expansion, see \cite{Weggler_2011aa} for a comprehensive account. 
Since the energy norm $\norm{\cdot}_\chispace$ of the density is not computable explicitly, we choose to compare the $\bb L^2(\Gamma)$-error of the density.
In accordance to quasi-optimality of the approach, cf.~Theorem \ref{thm::quasioptimality}, a convergence of order $p$ is expected\footnote{We remark again, that with $p$ we refer to the minimal polynomial degree utilized in the construction of the first space of the discrete sequence \eqref{spline::sequence}.}, and can indeed be observed, cf.~Figure \ref{num::sphere::mie}.

\begin{figure}\centering
\centering
	\begin{subfigure}{.48\textwidth}
	\begin{tikzpicture}[scale = .65]
			\begin{axis}[
			height = 7.5cm,
			ymode=log,
			xmode=log,
			x dir = reverse,
			xlabel=reference mesh size $h$,
			xtick={0.5,0.25,0.125,0.06125},
			grid = major,
			xticklabels={$1/2$,$1/4$,$1/8$,$1/16$},
			ylabel=$L^{2}$-error of density,
			legend style={at={(0.0,0.0)},anchor=south west},
			legend columns=2,
			]
			\addplot[line width = 1.5pt,blue,mark = triangle*,mark size=3pt] table [trim cells=true,x=h1,y=L21] {data/plot_sphere_master};
			\addlegendentry{$p=1$}
			\addplot[line width = 1.5pt,red,mark = triangle*,mark size=3pt] table [trim cells=true,x=h1,y=L22] {data/plot_sphere_master};
			\addlegendentry{$p=2$}
			\addplot[line width = 1.5pt,brown,mark = triangle*,mark size=3pt] table [trim cells=true,x=h1,y=L23] {data/plot_sphere_master};
			\addlegendentry{$p=3$}
			\addplot[line width = 1.5pt,gray,mark = triangle*,mark size=3pt] table [trim cells=true,x=h1,y=L24] {data/plot_sphere_master};
			\addlegendentry{$p=4$}
			\addplot[line width = 1.5pt,blue,mark = none,dotted,mark size=3pt] table [trim cells=true,x=h1,y=h1] {data/plot_sphere_master};
			\addlegendentry{$\mathcal O(h^1)$}
			\addplot[line width = 1.5pt,red,mark = none,dotted,mark size=3pt] table [trim cells=true,x=h1,y=h2] {data/plot_sphere_master};
			\addlegendentry{$\mathcal O(h^2)$}
			\addplot[line width = 1.5pt,brown,mark = none,dotted,mark size=3pt] table [trim cells=true,x=h1,y=h3] {data/plot_sphere_master};
			\addlegendentry{$\mathcal O(h^3)$}
			\addplot[line width = 1.5pt,gray,mark = none,dotted,mark size=3pt] table [trim cells=true,x=h1,y=h4] {data/plot_sphere_master};
			\addlegendentry{$\mathcal O(h^4)$}
			\end{axis}
			\end{tikzpicture}
			\subcaption{\footnotesize $L^2(\Gamma)$-error of the density.}
\label{num::sphere::mie}
		\end{subfigure}
		\begin{subfigure}{.48\textwidth}
		\begin{tikzpicture}[scale = .65]
				\begin{axis}[
				height = 7.5cm,
				ymode=log,
				xmode=log,
				ylabel=$\ell^{\infty}$-error of exterior solution,
				grid = major,
				xtick={.5, .25, .125, .0625},
				xlabel=reference mesh size $h$,
				legend style={at={(0.0,0.0)},anchor=south west},
				legend columns=2,
				x dir=reverse,
				xticklabels={$1/2$, $1/4$, $1/8$, $1/16$},
				]
				\addplot[line width = 1.5pt,blue,mark = triangle*,mark size=3pt] table [trim cells=true,x=h1,y=pot1] {data/plot_sphere_master};
				\addlegendentry{$p=1$}
				\addplot[line width = 1.5pt,red,mark = triangle*,mark size=3pt] table [trim cells=true,x=h1,y=pot2] {data/plot_sphere_master};
				\addlegendentry{$p=2$}
				\addplot[line width = 1.5pt,brown,mark = triangle*,mark size=3pt] table [trim cells=true,x=h1,y=pot3] {data/plot_sphere_master};
				\addlegendentry{$p=3$}
				\addplot[line width = 1.5pt,gray,mark = triangle*,mark size=3pt] table [trim cells=true,x=h1,y=pot4] {data/plot_sphere_master};
				\addlegendentry{$p=4$}
				\addplot[line width = 1.5pt,blue,mark = none,dotted,mark size=3pt] table [trim cells=true,x=h1,y=h3pot] {data/plot_sphere_master};
				\addlegendentry{$\mathcal O(h^3)$}
				\addplot[line width = 1.5pt,red,mark = none,dotted,mark size=3pt] table [trim cells=true,x=h1,y=h5pot] {data/plot_sphere_master};
				\addlegendentry{$\mathcal O(h^5)$}
				\addplot[line width = 1.5pt,brown,mark = none,dotted,mark size=3pt] table [trim cells=true,x=h1,y=h7pot] {data/plot_sphere_master};
				\addlegendentry{$\mathcal O(h^7)$}
				\addplot[line width = 1.5pt,gray,mark = none,dotted,mark size=3pt] table [trim cells=true,x=h1,y=h9pot] {data/plot_sphere_master};
				\addlegendentry{$\mathcal O(h^9)$}
				\end{axis}
				\end{tikzpicture}
				\subcaption{\footnotesize $\ell^{\infty}$-error of electric field for $\Dipole_{(0,0.1,0.1)}$.}
\label{num::sphere::man}
				\end{subfigure}
		\caption{Numerical exmples on the unit sphere. Wave number $\kappa = 1$, parameters $q= 10$, and $\eta = 1.6$. The $\Dipole$-error refers to the maximum error obtained via the manufactured solution of a selection of 100 points on a sphere of radius 3 around the origin. GMRES was restarted every 1500 iterations, with a stopping criterion of $\norm{\bb r}_2\leq 10^{-8}$.}
\end{figure}
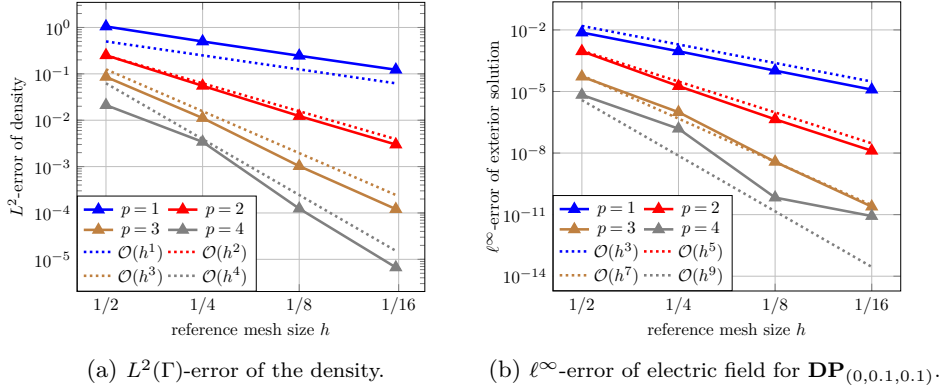

\subsection{The Electric Field as a Quantitiy of Interest}\label{numsec::mieman}

{Although the density obtained in an approach via the electric field integral equation admits a physical interpretation as the surface current, the quantity of interest of scattering problems is mainly the scattered electric field.}

Unfortunately, a numerical implementation of the Mie series {for the computation of the electric field in open space} could not achieve a sufficiently high precision to compare with the high accuracies provided by our isogeometric method. Thus, in order to obtain a reference solution, we employ an approach via manufactured solution, i.e., a function that fulfills the electric wave equation in $\Omega^c$ is used to generate the required Dirichlet data. By existence and uniqueness of the solution, cf.~\cite{Buffa_2003ab}, one can thus validate the numerical scheme.
As such a manufactured solution, we utilize a simple Hertz-Dipole, for which one
can check that it fulfils \eqref{problem::ext_scattering}.
\begin{definition}[Hertz-Dipole, {\cite[{p.~411, (9.18)}]{Jackson_1998aa}}]
Let $\bb x_0\in \Omega^c$. We define the function
\begin{align*}
\Dipole_{\bb x_0}(\bb x)
\coloneqq
e^{i\kappa r}
\bigg(
\frac{\kappa^2}{r}
(\bb n\times\bb p_0)\times\bb n
+
\bigg(
\frac{1}{r^3}-\frac{i\kappa}{r^2}
\bigg)
\big( 3\bb n(\bb n\cdot\bb p_0)-\bb p_0\big)
\bigg),
\end{align*}
with $r=\|\bb x-\bb x_0\|$, $\bb p_0=(0,0.1,0.1)$, and $\bb n=(\bb x-\bb x_0)/r$.
\end{definition}

Given a reference solution, the errors illustrated in Figure \ref{num::sphere::man}
validate the convergence rates of the electric field predicted by
Corollary~\ref{cor:potentialconv}. The last data point of the highest order does not
match the predicted order, but is, with an error around $10^{-12}$, close enough to
machine accuracy to expect noticeable numerical inaccuracies.

Since the sphere example is a classical benchmark test, we choose to publish detailed data about the computation, specifically in terms of time to solution, in Table \ref{tab::sphere}. There, one can also find detailed information about the machine used for computations. This may serve as a reference to compare the presented approach to other implementations, but we stress again that one has to act cautiously when comparing times, since the performance of the fast method depends on various parameters of the problem, in particular, the ratio of the wave number $\kappa$ to the size of the geometry. The input parameters of all computations are detailed in the captions of the corresponding figures. 

Also, we note that due to the efficient, element-based approach of the multipole method, the time spend for matrix assembly is negligible 
compared to the time required for the solution of the linear system, cf.~Table~\ref{tab::sphere}. 

\setlength{\tabcolsep}{11pt}
\begin{table}
	\caption{Detailed data of the unit sphere example with $\kappa = 1$ and $\eta = 1.6$. Computed on a Workstation with Intel(R) Xeon(R) CPU E5-2670 0 @ 2.60GHz, and has been compiled with \texttt{g++ 5.4}, with compile flags \texttt{-O3 -march=native -fopenmp}. Mie-error refers to the error w.r.t.~the analytic solution of the scattering problem described in Section \ref{numsec::miesphere}, while the $\Dipole$-error refers to the error obtained via the manufactured solution as described in Section \ref{numsec::mieman}, cf.~Figures \ref{num::sphere::mie} and \ref{num::sphere::man}. Every 1500 iterations, the GMRES was restarted, with a stopping criterion of $\norm{\bb r}_2\leq 10^{-8}$. Evaluation of the $\Dipole{}$-error was done on a set of points scattered across the sphere of radius 3.}\label{tab::sphere}
	\begin{footnotesize}
	\begin{center}
	\begin{tabular}{|r|llll|}\hline
		&\multicolumn{4}{|c|}{$p = 1$}\\ \hline
		$h$ w.r.t.~$\square$& 0.5 & 0.25 & 0.125 & 0.06125\\
		DOFs {(real, double prec.)}&96&384&1536& 6144 \\
		matrix ass. (s)&0.02&0.14&1.14& 9.15 \\
		solving (s)&0.02&0.32&3.4& 79.9 \\
		GMRES iterations& 12&55&119 & 231 \\
		$\Dipole$-error &0.0074&0.0009&0.0001 & 1.23e-05 \\
		Mie error ($\bb L^2$) &1.051&0.499&0.246& 0.122 \\
		\hline
		& \multicolumn{4}{|c|}{$p = 2$}  \\ \hline
		$h$ w.r.t.~$\square$& 0.5 & 0.25 & 0.125 & 0.06125\\
		DOFs {(real, double prec.)} &216&600&1944	 & 6936 \\
		matrix ass. (s) &0.06&0.55&4.8			 & 47.3 \\
		solving (s) &0.046&2.6&100.6			 & 2279.6 \\
		GMRES iterations&48&158&362				 & 616\\
		$\Dipole$-error &0.0009&1.82e-05&4.41e-07 & 1.29e-08 \\
		Mie error ($\bb L^2$) & 0.251&0.052&0.012			 & 0.0029 \\
		\hline
		&\multicolumn{4}{|c|}{$p = 3$} \\ \hline
		$h$ w.r.t.~$\square$& 0.5 & 0.25 & 0.125 & 0.06125\\
		DOFs {(real, double prec.)}&384&864&2400& 7776 \\
		matrix ass. (s)&0.8&1.16&17.4& 197.3 \\
		solving (s)&0.15&8.46&237.8&8433 \\
		GMRES iterations& 123&294&702 & 2003 \\
		$\Dipole$-error &5.29e-05&9.83e-07&3.72e-09& 2.45e-11 \\
		Mie error ($\bb L^2$) &0.085&0.011&0.0010& 0.000121 \\
		\hline
		&\multicolumn{4}{|c|}{$p = 4$} \\ \hline
		$h$ w.r.t.~$\square$& 0.5 & 0.25 & 0.125 & 0.06125\\
		DOFs {(real, double prec.)}&600&1176&2904& 8664 \\
		matrix ass. (s)&0.6&5.42&52.1& 746.29 \\
		solving (s)&2.08&79.2&3072.9&78508 \\
		GMRES iterations&224&400&919 & 5681\\
		$\Dipole$-error &6.81-e06&1.54e-07&6.77e-11& 8.33e-12 \\
		Mie error ($\bb L^2$) &0.021&0.0034&0.00012& 6.69e-06 \\ \hline
		\end{tabular}
	\end{center}
	\end{footnotesize}
\end{table}

\begin{figure}\centering
	\includegraphics[width=.35\textwidth]{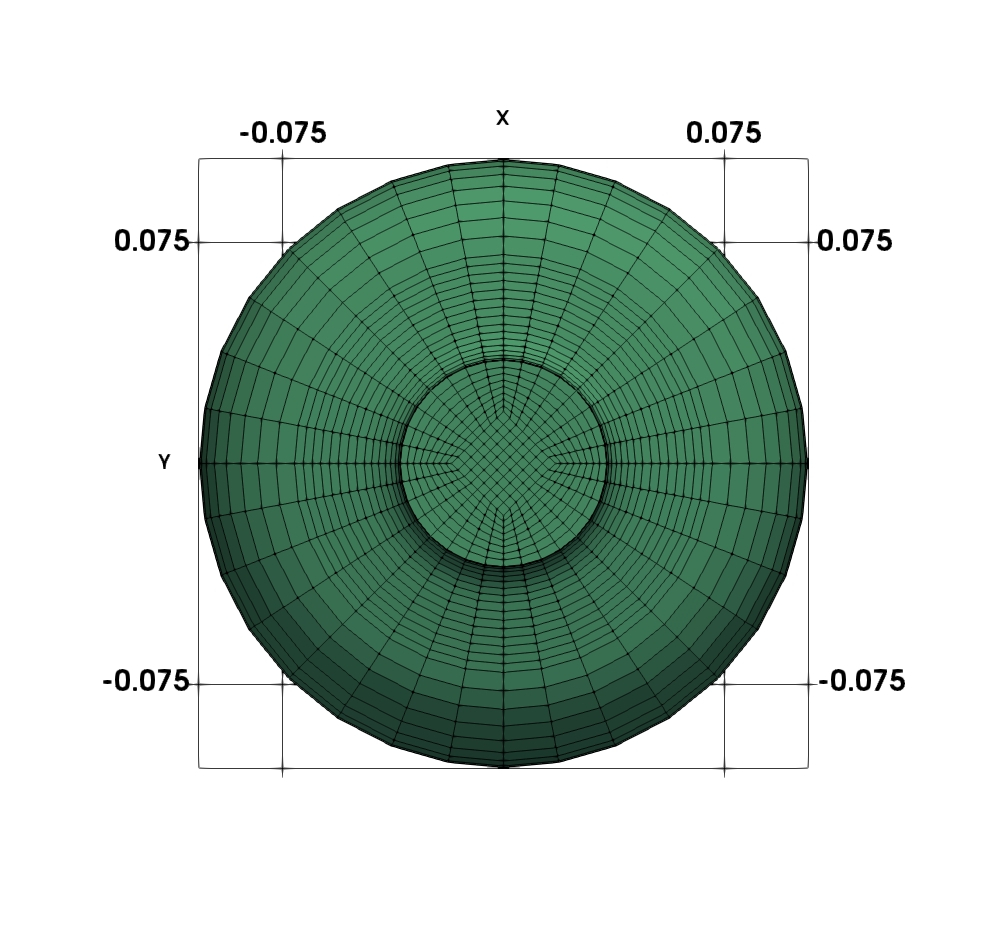}\quad\includegraphics[width=.35\textwidth]{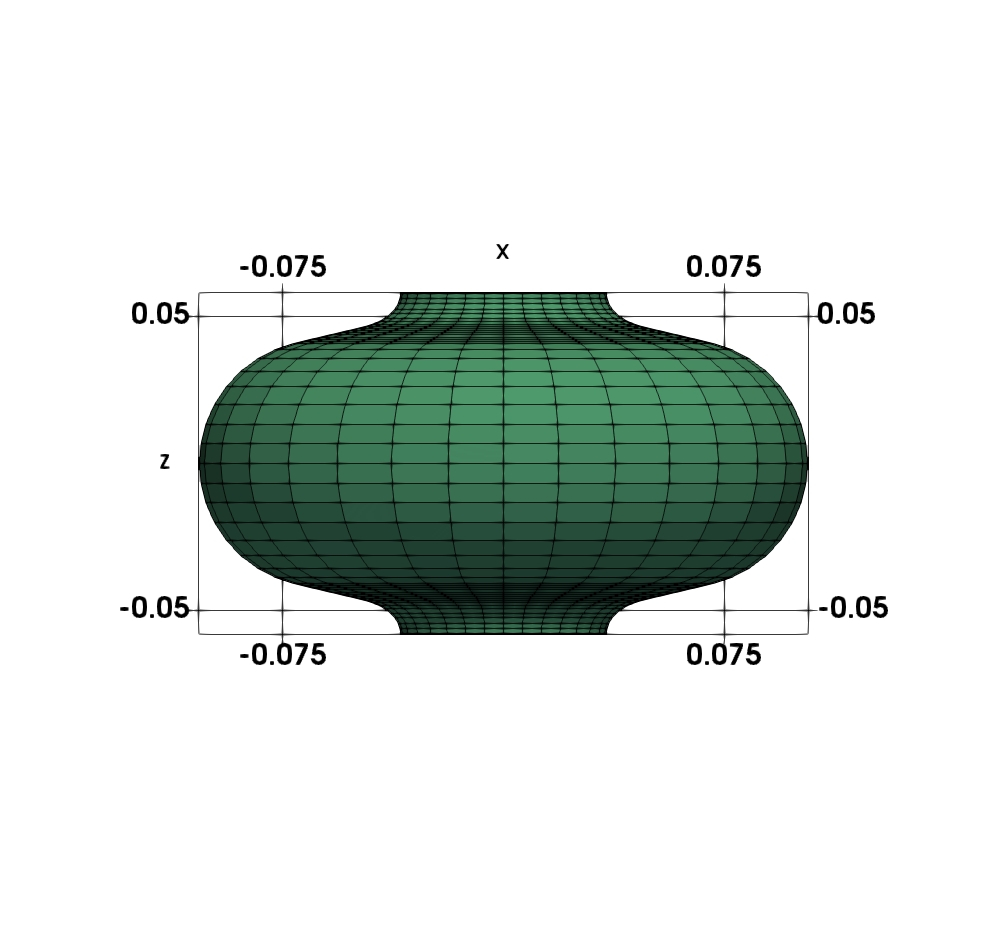}
	\includegraphics[width=.75\textwidth]{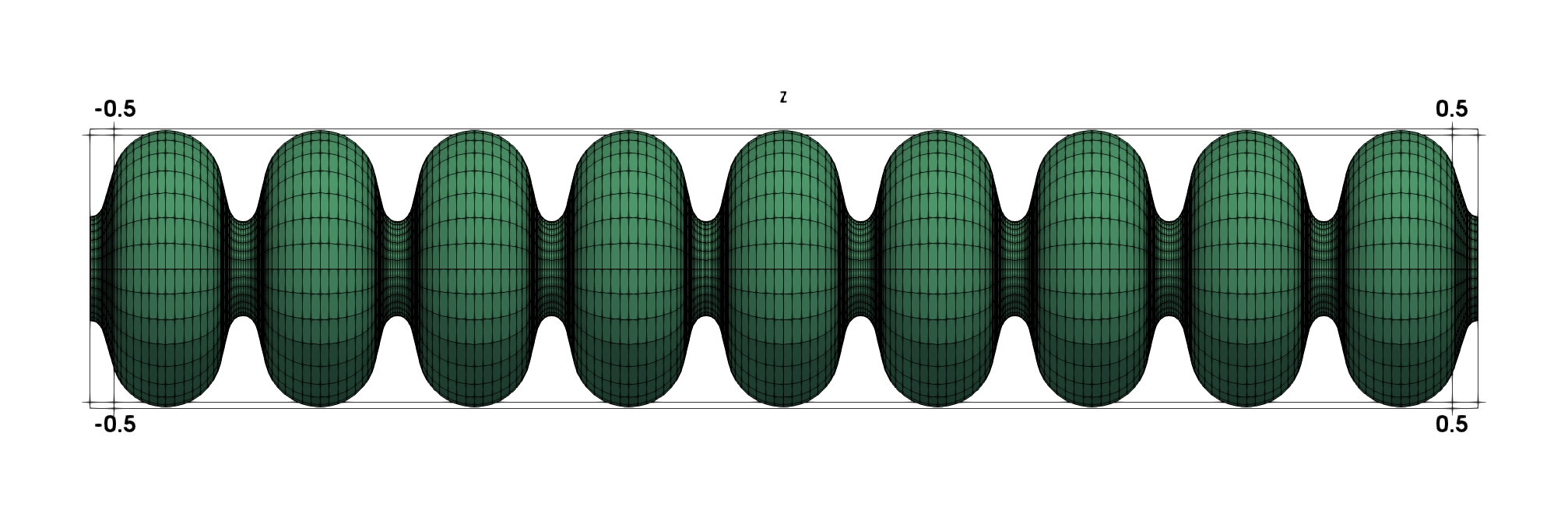}
	\caption{Mesh induced by refinement of level 3 of the Tesla geometries}\label{fig::t1c}
\end{figure}

\subsection{Manufactured Solution: Tesla Cavities}
To test more involved geometries with larger numbers of degrees of freedom, we test our boundary element method on the Tesla cavity geometries. They resemble the cavities as used in particle accelerators, for example at DESY \cite{desy}. Simulation of electromagnetic fields within such cavities is of enormous practical importance, due to the high manufacturing costs through utilization of superconducting materials. 
Thus, one aims for accuracies of the simulation that exceed the tolerances that manufacturers can achieve. Boundary element methods are a good fit for these requirements, due to the high convergence order of pointwise values within the domain, cf.~\cite[Cor.~3.4]{Dolz_2018aa}.

We start these numerical experiments on a single cell of the Tesla cavity, as depicted in Figure \ref{fig::t1c},
which resembles a single cell of the full nine-cell cavity. A volumetric discretization is freely available through the \texttt{geopdes} package of Octave \cite{Falco_2011aa}.
We extracted the boundary in the form of 34 (one-cell) and 226 (nine-cell) quadratic patches of similar sizes, such that all geometry mappings are smooth due to no interior knot repetitions.  On these we apply basis functions of different polynomial degrees, refining uniformly in each refinement step to induce a hierarchical structure cf.~Figure~\ref{fig::refinement}. 
The scattering problem is then solved with a right hand side induced by the Dipole for which the precise parameters are presented in Figure \ref{plot::onecellresults} and Table \ref{tab::tesla}.
The results are depicted in Figure \ref{plot::onecellresults}.
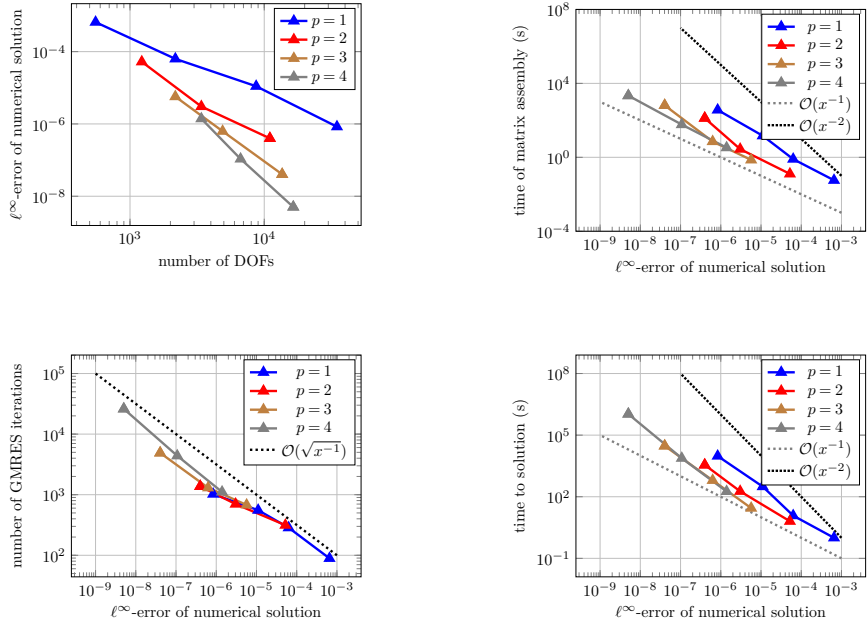
\begin{figure}
\begin{subfigure}{.46\textwidth}
\centering
	\begin{tikzpicture}[scale = .60]
		\begin{axis}[
		height = 6.5cm,
		width = 8cm,
		ymode=log,
		xmode=log,
		xlabel=number of DOFs,
		grid = major,
		ylabel=$\ell^{\infty}$-error of numerical solution,
		]
		\addplot[line width = 1.5pt,blue,mark = triangle*,mark size=3pt] table [trim cells=true,x=dofs1,y=error1] {data/plot_T1C_master};
		\addlegendentry{$p=1$}
		\addplot[line width = 1.5pt,red,mark = triangle*,mark size=3pt] table [trim cells=true,x=dofs2,y=error2] {data/plot_T1C_master};
		\addlegendentry{$p=2$}
		\addplot[line width = 1.5pt,brown,mark = triangle*,mark size=3pt] table [trim cells=true,x=dofs3,y=error3] {data/plot_T1C_master};
		\addlegendentry{$p=3$}
		\addplot[line width = 1.5pt,gray,mark = triangle*,mark size=3pt] table [trim cells=true,x=dofs4,y=error4] {data/plot_T1C_master};
		\addlegendentry{$p=4$}
		\end{axis}
		\end{tikzpicture}
\end{subfigure}\qquad
\begin{subfigure}{.46\textwidth}
\centering
	\begin{tikzpicture}[scale = .60]
		\begin{axis}[
		height = 6.5cm,
		width = 8cm,
		ymode=log,
		xmode=log,
		ylabel=time of matrix assembly (s),
		grid = major,
		xlabel=$\ell^{\infty}$-error of numerical solution,
		]
		\addplot[line width = 1.5pt,blue,mark = triangle*,mark size=3pt] table [trim cells=true,x=error1,y=ass1] {data/plot_T1C_master};
		\addlegendentry{$p=1$}
		\addplot[line width = 1.5pt,red,mark = triangle*,mark size=3pt] table [trim cells=true,x=error2,y=ass2] {data/plot_T1C_master};
		\addlegendentry{$p=2$}
		\addplot[line width = 1.5pt,brown,mark = triangle*,mark size=3pt] table [trim cells=true,x=error3,y=ass3] {data/plot_T1C_master};
		\addlegendentry{$p=3$}
		\addplot[line width = 1.5pt,gray,mark = triangle*,mark size=3pt] table [trim cells=true,x=error4,y=ass4] {data/plot_T1C_master};
		\addlegendentry{$p=4$}
				\addplot[line width = 1.5pt,gray,dotted,mark = none] table [trim cells=true,x=N,y=Nrevs] {data/plot_T1C_master};
		\addlegendentry{$\mathcal{O}(x^{-1})$};
		\addplot[line width = 1.5pt,black,densely dotted,mark = none,mark size=3pt] table [trim cells=true,x=N,y=NNs] {data/plot_T1C_master};
		\addlegendentry{$\mathcal{O}(x^{-2})$};
		\end{axis}
		\end{tikzpicture}
\end{subfigure}\\[1cm]
\begin{subfigure}{.46\textwidth}
\centering
	\begin{tikzpicture}[scale = .60]
		\begin{axis}[
		height = 6.5cm,
		width = 8cm,
		ymode=log,
		xmode=log,
		xlabel=$\ell^{\infty}$-error of numerical solution,
		grid = major,
		ylabel=number of GMRES iterations,
		]
		\addplot[line width = 1.5pt,blue,mark = triangle*,mark size=3pt] table [trim cells=true,x=error1,y=it1] {data/plot_T1C_master};
		\addlegendentry{$p=1$}
		\addplot[line width = 1.5pt,red,mark = triangle*,mark size=3pt] table [trim cells=true,x=error2,y=it2] {data/plot_T1C_master};
		\addlegendentry{$p=2$}
		\addplot[line width = 1.5pt,brown,mark = triangle*,mark size=3pt] table [trim cells=true,x=error3,y=it3] {data/plot_T1C_master};
		\addlegendentry{$p=3$}
		\addplot[line width = 1.5pt,gray,mark = triangle*,mark size=3pt] table [trim cells=true,x=error4,y=it4] {data/plot_T1C_master};
		\addlegendentry{$p=4$}
		\addplot[line width = 1.5pt,black,dotted,mark = none,mark size=3pt] table [trim cells=true,x=N,y=Nsqrt] {data/plot_T1C_master};
		\addlegendentry{$\mathcal O(\sqrt{x^{-1}})$}
		\end{axis}
		\end{tikzpicture}
\end{subfigure}\qquad
\begin{subfigure}{.46\textwidth}
\centering
	\begin{tikzpicture}[scale = .60]
		\begin{axis}[
		height = 6.5cm,
		width = 8cm,
		ymode=log,
		xmode=log,
		ylabel=time to solution (s),
		grid = major,
		xlabel=$\ell^{\infty}$-error of numerical solution,
		]
		\addplot[line width = 1.5pt,blue,mark = triangle*,mark size=3pt] table [trim cells=true,x=error1,y=ts1] {data/plot_T1C_master};
		\addlegendentry{$p=1$}
		\addplot[line width = 1.5pt,red,mark = triangle*,mark size=3pt] table [trim cells=true,x=error2,y=ts2] {data/plot_T1C_master};
		\addlegendentry{$p=2$}
		\addplot[line width = 1.5pt,brown,mark = triangle*,mark size=3pt] table [trim cells=true,x=error3,y=ts3] {data/plot_T1C_master};
		\addlegendentry{$p=3$}
		\addplot[line width = 1.5pt,gray,mark = triangle*,mark size=3pt] table [trim cells=true,x=error4,y=ts4] {data/plot_T1C_master};
		\addlegendentry{$p=4$}
		\addplot[line width = 1.5pt,gray,dotted,mark = none,mark size=3pt] table [trim cells=true,x=N,y=Nrev] {data/plot_T1C_master};
		\addlegendentry{$\mathcal{O}(x^{-1})$};
		\addplot[line width = 1.5pt,black, densely dotted,mark = none,mark size=3pt] table [trim cells=true,x=N,y=NN] {data/plot_T1C_master};
		\addlegendentry{$\mathcal{O}(x^{-2})$};
		\end{axis}
		\end{tikzpicture}
\end{subfigure}
\caption{Results for the Tesla 1-Cell geometry. Wave number $\kappa = 18$, manufacturd solution $\Dipole_{(0,0.1,0.1)}$. Admissibility condition with $\eta=0.1$ and $q=14$. GMRES restart after 1500 iterations, stopping criterion $\norm{\bb r}_2\leq 10^{-10}$. The $\Dipole$-error refers to the maximum error obtained via the manufactured solution of a selection of 100 points on a sphere of radius 3 around the origin. 
}
\label{plot::onecellresults}
\end{figure}

One can still observe the high convergence rates w.r.t.~the number of degrees of freedom. One can also see that the time for matrix assembly, as well as the time to solution, seem to scale  independent of the polynomial degree of the discrete functions.
However, both the time for matrix assembly and the time to solution differ by a constant factor, favouring solutions obtained via higher order approaches.
Moreover, the number of GMRES iterations required for the solution of the system w.r.t.~the achieved accuracy of the solution appears to scale completely independent of $p$. This also favors higher-order approaches: For a set accuracy, systems of higher order approaches are smaller due to the higher accuracy per DOF. Thus, an iteration of a matrix-free solver is computationally cheaper.

For the nine-cell example, such clear behavior is not visible, cf. Table \ref{tab::tesla}. 
We attribute this to the fact that the compression parameters (for admissibility condition and order $q$ of the multipole interpolation) had to be chosen such that the problem remained computable on the accessible machines, i.e., one can not depend on the result of Theorem \ref{thm::mutlipole}.
Despite the suboptimal choice of parameters, one still can observe that the method converges and yields good results.

\begin{table}
	\caption{Detailed data of the sphere example with $\kappa = 10$. 
	Computed on a Workstation, with Intel(R) Xeon(R) CPU E7- 8850, and has been compiled with \texttt{g++ 4.8.5}, with compile flags \texttt{-O3 -march=native -fopenmp}. 
	The $\Dipole$-error refers to the maximum error obtained via the manufactured solution of a selection of 100 points on a sphere of radius 3 around the origin. 
	The stopping criterion for the GMRES was a residual of $\|\bb r\|_2<10^{-10},$ with a restart every 1500 iterations.
	}\label{tab::tesla}
	\begin{footnotesize}
	\begin{center}
	\begin{tabular}{|r|lll|}\hline
		& \multicolumn{3}{|c|}{$p = 2$, $q = 12$, $\eta = 0.15$}  \\ \hline
		$h$ w.r.t.~$\square$& 0.5 & 0.25 & 0.125 \\
		DOFs {(real, double prec.)}& 8136& 22600 &73224\\
		matrix ass. (s)& 43 & 64 & 2031\\
		GMRES iterations& 879 & 1230 & 2552\\
		$\Dipole$-error& 1.69e-03 & 3.84e-07 & 9.79e-09\\
		\hline
		&\multicolumn{3}{|c|}{$p = 3$, $q = 10$, $\eta = 0.3$} \\ \hline
		$h$ w.r.t.~$\square$& 0.5 & 0.25 & 0.125 \\
		DOFs {(real, double prec.)}&14464 & 32544 & 90400\\
		matrix ass. (s)& 37 & 207 & 5944\\
		GMRES iterations& 1424 & 2987 & 7934\\
		$\Dipole$-error& 9.07e-07 & 3.28e-07 & 1.33e-09\\
		\hline
		\end{tabular}
	\end{center}
	\end{footnotesize}
\end{table}
\section{Conclusion}\label{sec::conclusion}

The solution of electromagnetic scattering problems is an important task in
computational engineering. In particular for exterior scattering problems,
the idea of boundary element methods plays well together with the idea of
isogeometric analysis, since boundary representations of geometries are
naturally available from CAD frameworks.

We provided stability assertions for conforming B-spline discretizations
for the electromagnetic scattering problems in the multi-patch case.
Together with recent approximation results \cite{Buffa_2018aa}, this
yields optimal convergence results for the electromagnetic scattering
problem. To cope with the arising dense matrices from the discretization,
we introduced an interpolation-based fast multipole method, which directly
integrates into the isogeometric framework. In particular, and in contrast to
other approaches in the literature, it avoids redundant evaluations of
kernel and geometry and provides simple means for an efficient
$\mathcal{H}^2$-matrix implementation, which provides superior complexity
properties compared to $\mathcal{H}$-matrix implementations. We established
relations between the optimal convergence rates and the compression parameters.
This shows how the compression parameters must be chosen such that the
convergence rates can be maintained.

All theoretical results were confirmed by multiple numerical examples.
We provided detailed data about intensive numerical tests 
and showed that that higher order
approaches yield the extremely high orders of convergence, as predicted 
by the established theory. Moreover, numerical experiments suggest that
approaches via higher order B-splines are favorable over lower order
approaches, w.r.t.~accuracy per DOF and time to solution for the desired
accuracy.

\begin{footnotesize}
\section*{Acknowledgments}
The authors would like to thank Lucy Weggler for providing the numerical implementation of
the reference solution for the Mie scattering.
This work is supported by DFG Grants \emph{SCHO1562/3-1} and \emph{KU1553/4-1}
within the project \emph{Simulation of superconducting cavities with isogeometric
boundary elements (IGA-BEM)}. Jürgen Dölz is an \emph{Early Postdoc.Mobility
fellow}, funded by the Swiss National Science Foundation through the project
\emph{174987 H-Matrix Techniques and Uncertainty Quantification in Electromagnetism},
the \emph{Excellence Initiative} of the German Federal and State Governments
and the \emph{Graduate School of Computational Engineering} at TU Darmstadt. The
work of Felix Wolf is supported by the \emph{Excellence Initiative} of the German
Federal and State Governments and the \emph{Graduate School of Computational
Engineering} at TU Darmstadt.
\end{footnotesize}

\bibliographystyle{siamplain}

\begin{thebibliography}{0}

\bibitem{Aimi_18aa}
A.~Aimi, F.~Calabrò, M.~Diligenti, M.L.~Sampoli, G.~Sangalli, A.~Sestini.
\newblock Efficient assembly based on {B}-spline tailored quadrature rules for the {IgA-SGBEM}.
\newblock {\em Comput. Meth. Appl. Mech. Eng.}, 331:(327--342), 2018.

\bibitem{Babuska_1969aa}
I.~Babuška.
\newblock Error-bounds for finite element method.
\newblock {\em J. Numer. Math.}, 16(4):322--333, 1969.

\bibitem{Beb00}
M.~Bebendorf.
\newblock Approximation of boundary element matrices.
\newblock \emph{Numer. Math.}, 86(4):565--589, 2000.

\bibitem{Beer_2017aa}
G.~Beer, V.~Mallardo, E.~Ruocco, B.~Marussig, J.~Zechner, C.~D\"unser, and T.-P.~Fries.
\newblock Isogeometric boundary element analysis with elasto-plastic
  inclusions. part 2: {3-D} problems.
\newblock {\em Comput. Meth. Appl. Mech. Eng.}, 315(Supplement C):418--433,
  2017.

\bibitem{Beirao-da-Veiga_2014aa}
L.~Beirão~da Veiga, A.~Buffa, G.~Sangalli, and R.~V\'azquez.
\newblock Mathematical analysis of variational isogeometric methods.
\newblock {\em Acta. Num.}, 23:157–--287, 2014.

\bibitem{bembel}
J.~D\"olz, H.~Harbrecht, S.~Kurz, M.~Multerer, S.~Sch\"ops, F.~Wolf. Bembel: The Fast Isogeometric Boundary Element C++ Library for Laplace, Helmholtz, and Electric Wave Equation.
\href{www.bembel.eu}{www.bembel.eu}. Technical Report: arXiv:1906.00785.

\bibitem{Bespalov_2010aa}
A.~Bespalov, N.~Heuer, and R.~Hiptmair.
\newblock Convergence of the natural hp-{BEM} for the electric field integral
  equation on polyhedral surfaces.
\newblock {\em SIAM J. Numer. Anal.}, 48:1518--1529, 2010.

\bibitem{Bontinck_2017ag}
Z.~Bontinck, J.~Corno, H.~De~Gersem, S.~Kurz, A.~Pels,
  S.~Sch\"ops, F.~Wolf, C.~de~Falco, J.~D\"olz, R.~V\'azquez, and U.~R\"omer.
\newblock Recent advances of isogeometric analysis in computational
  electromagnetics.
\newblock {\em ICS Newsletter (International Compumag Society)}, 3, 2017.

\bibitem{Borm_2010aa}
S.~B{\"o}rm.
\newblock {\em Efficient numerical methods for non-local operators},
{\em EMS Tracts in Mathematics}, vol. 14.
\newblock European Mathematical Society (EMS), Z\"urich, 2010.

\bibitem{Buffa_2003aa}
A.~Buffa and S.H.~Christiansen.
\newblock The electric field integral equation on {Lipschitz} screens:
  definitions and numerical approximation.
\newblock {\em Numer. Math.}, 94(2):229--267, 2003.

\bibitem{Buffa_2002aa}
A.~Buffa, M.~Costabel, and C.~Schwab.
\newblock Boundary element method for {Maxwell}'s equations on non-smooth
  domains.
\newblock {\em Numer. Math.}, 92:679--710, 2002.

\bibitem{Buffa_2018aa}
A.~Buffa, J.~D\"olz, S.~Kurz, S.~Sch\"ops, R.~V\'azquez, and F.~Wolf.
{\newblock Multipatch approximation of the de {R}ham sequence and its traces in
  isogeometric analysis.
\newblock \emph{Submitted}. Preprint available: arXiv:1806.01062 [math.NA].}

\bibitem{Buffa_2003ab}
A.~Buffa and R.~Hiptmair.
\newblock {Galerkin} boundary element methods for electromagnetic scattering.
\newblock In {\em Topics in computational wave propagation}, 83--124. Springer, 2003.

\bibitem{Buffa_2004aa}
A.~Buffa and R.~Hiptmair.
\newblock A coercive combined field integral equation
for electromagnetic scattering.
\newblock Siam J. Numer. Anal. 42(2):621--640, 2004.

\bibitem{Buffa_2011aa}
A.~Buffa, J.~Rivas, G.~Sangalli, and R.~V\'azquez.
\newblock Isogeometric discrete differential forms in three dimensions.
\newblock {\em SIAM J. Numer. Anal.}, 49(2):818--844, 2011.

\bibitem{Buffa_2014aa}
A.~Buffa and R.~Vázquez.
\newblock Isogeometric analysis for electromagnetic scattering problems.
\emph{International Conference on Numerical Electromagnetic Modeling and Optimization for RF, Microwave, and Terahertz Applications (NEMO)},
1--3,  Pavia, 2014.

\bibitem{Cottrell_2009aa}
J.~Austin Cottrell, T.J.R.~Hughes, and Y.~Bazilevs.
\newblock {\em Isogeometric Analysis: Toward Integration of {CAD} and {FEA}}.
\newblock Wiley, 2009.

\bibitem{Falco_2011aa}
C.~de~Falco, A.~Reali, and R.~V\'azquez.
\newblock {GeoPDEs}: A research tool for isogeometric analysis of {PDE}s.
\newblock {\em Adv. Eng. Software}, 42:1020--1034, 2011.

\bibitem{desy}
Deutsches Elektronen-Synchrotron DESY. TESLA Technology Collaboration: Cavity Database. \href{http://tesla-new.desy.de/cavity\_database}{tesla-new.desy.de/cavity\_database}. Date of access: 13:17, May 4. 2018.

\bibitem{Dolz_2016aa}
J.~D\"olz, H.~Harbrecht, and M.~Peters.
\newblock An interpolation-based fast multipole method for higher-order
  boundary elements on parametric surfaces.
\newblock {\em Int. J. Numer. Meth. Eng.},
  108(13):1705--1728, 2016.

\bibitem{Dolz_2018aa}
J.~D\"olz, H.~Harbrecht, S.~Kurz, S.~Sch\"ops, and F.~Wolf.
\newblock A fast isogeometric {BEM} for the three dimensional {Laplace}- and
  {Helmholtz} problems.
\newblock {\em Comput. Meth. Appl. Mech. Eng.}, 330:83--101, 2018.

\bibitem{Evans_2014aa}
E.J. Evans, M.A. Scott, X. Li and D.C. Thomas. \newblock Hierarchical T-splines:
Analysis-suitability, B\'ezier extraction, and application as an adaptive basis for isogeometric
analysis. \newblock {\em Comput. Meth. Appl. Mech. Eng.} 284:1--20, 2015.

\bibitem{Feischl_2017aa}
M.~Feischl, G.~Gantner, A.~Haberl, D.~Praetorius.
\newblock Optimal convergence for adaptive {IGA} boundary element methods for weakly-singular integral equations.
\newblock {\em Numer. Math.}, 136:147–182, 2017.

\bibitem{Giebermann_2001aa}
K.~Giebermann.
\newblock Multilevel approximation of boundary integral operators.
\newblock {\em Computing}, 67(3):183--207, 2001.

\bibitem{GR86}
V.~Girault and P.~Raviart.
\newblock {\em Finite {E}lement {M}ethods for {N}avier-{S}tokes {E}quations. {T}heory and {A}lgorithms}.
\newblock Springer, Berlin-Heidelberg, 1986.

\bibitem{greengard1987fast}
L.~Greengard and V.~Rokhlin.
\newblock A fast algorithm for particle simulations.
\newblock {\em J. Comput. Phys.}, 73(2):325--348, 1987.

\bibitem{HN89}
W.~Hackbusch, Z.P.~Nowak.
\newblock On the fast matrix multiplication in the boundary
element method by panel clustering.
\newblock \emph{Numer. Math.} 1989; 54(4):463--491.

\bibitem{Hackbusch_2002aa}
W.~Hackbusch and S.~Börm.
\newblock {$\mathcal{H}^2$}-matrix approximation of integral operators by
interpolation.
\newblock {\em Appl. Numer. Math.}, 43(1):129--143, 2002.

\bibitem{Hac15}
W.~Hackbusch.
\newblock {\em Hierarchical {M}atrices: {A}lgorithms and {A}nalysis}.
\newblock Springer, Heidelberg, 2015.

\bibitem{Harbrecht_2001aa}
H.~Harbrecht.
\newblock {\em Wavelet {Galerkin} schemes for the boundary element method in
  three dimensions}.
\newblock PhD thesis, Technische Universit\"at Chemnitz, 2001.

\bibitem{Harbrecht_2013ab}
H.~Harbrecht and M.~Peters.
\newblock Comparison of fast boundary element methods on parametric surfaces.
\newblock {\em Comput. Methods Appl. Mech. Engrg.}, 261--262:39--55, 2013.

\bibitem{Hiptmair_2002aa}
R.~Hiptmair.
\newblock Finite elements in computational electromagnetism.
\newblock {\em Acta. Num.}, 11:237--339, 2002.

\bibitem{Hughes_2005aa}
T.J.R.~Hughes, J.A.~Cottrell, and Y.~Bazilevs.
\newblock Isogeometric analysis: {CAD}, finite elements, {NURBS}, exact
  geometry and mesh refinement.
\newblock {\em Comput. Meth. Appl. Mech. Eng.}, 194:4135--4195, 2005.

\bibitem{Jackson_1998aa}
J.D.~Jackson.
\newblock {\em Classical Electrodynamics}.
\newblock Wiley and Sons, New York, 3rd edition, 1998.

\bibitem{Kurz_2002aa}
S.~Kurz, O.~Rain and S.~Rjasanow.
\newblock The adaptive cross-approximation technique for the {3D} boundary-element method.
\newblock {\em IEEE Trans. Magn.}, 38(2):421--424, 2002.

\bibitem{Lie_2016_aa}
J.~Li and D.~Dault and B.~Liu and Y.~Tong and B.~Shanker.
\newblock Subdivision based isogeometric analysis technique for electric field integral equations for simply connected structures.
\newblock {\em J. Comput. Phys.}, 319:145--162, 2016.

\bibitem{Marussig_2015aa}
B.~Marussig, J.~Zechner, G.~Beer, and T.-P.~Fries.
\newblock Fast isogeometric boundary element method based on independent field
  approximation.
\newblock {\em Comput. Meth. Appl. Mech. Eng.}, 284:458--488, 2015.

\bibitem{McLean_2000aa}
W.~McLean.
\newblock {\em Strongly elliptic systems and boundary integral equations}.
\newblock Cambridge University Press, Cambridge,United Kingdom, 2000.

\bibitem{OMPARB_2008aa}
OpenMP Architecture Review Board. OpenMP application program interface version 3.0, 2008.

\bibitem{Peterson_1995aa}
A.F.~Peterson and K.R.~Aberegg.
\newblock Parametric mapping of vector basis functions for surface integral
  equation formulations.
\newblock {\em Appl. Comput. Electromagn. Soc. J.}, 10:107--115, 1995.

\bibitem{Peterson_2006aa}
A.F.~Peterson.
\newblock Mapped vector basis functions for electromagnetic integral equations.
\newblock {\em Synth. Lec. Comput. Electromagn.}, 1(1):1--124, 2006.

\bibitem{Piegl_1997aa}
L.~Piegl and W.~Tiller.
\newblock {\em The {NURBS} Book}.
\newblock Springer, 2 edition, 1997.

\bibitem{WegglerMatrix} S.~Rjasanow, and L.~Weggler.  
\newblock Matrix valued adaptive cross approximation.
\newblock {\em Math. Meth. Appl. Sci.} 40:2522--2531, 2017. 

\bibitem{SS97}
S.A. Sauter and C.~Schwab.
\newblock Quadrature for $hp$-{G}alerkin {BEM} in $\mathbb{R}^3$.
\newblock \emph{Numerische Mathematik} 78(2):211--258, 1997.

\bibitem{SS11}
S.A. Sauter and C.~Schwab.
\newblock {\em Boundary {E}lement {M}ethods}.
\newblock Springer, Berlin-Heidelberg, 2011.

\bibitem{Schumaker_2007aa}
L.L.~Schumaker.
\newblock {\em Spline functions: Basic theory}.
\newblock Cambridge Mathematical Library. Cambridge University Press,
  Cambridge, United Kingdom, 2007.

\bibitem{Simpson_2018aa}
R.N.~Simpson, Z.~Liu, R.~V\'azquez, and J.A.~Evans.
\newblock An isogeometric boundary element method for electromagnetic
  scattering with compatible B-spline discretizations.
\newblock {\em J. Comput. Phys.}, 362:264--289, 2018.

\bibitem{Simpson_2012aa}
R.N. Simpson, S.P.A. Bordas, J.~Trevelyan, and T.~Rabczuk.
\newblock A two-dimensional isogeometric boundary element method for
  elastostatic analysis.
\newblock {\em Comput. Meth. Appl. Mech. Eng.}, 209–212:87--100, 2012.

\bibitem{Takahashi_2012aa}
T.~Takahashi, T.~Matsumoto.
\newblock An application of fast multipole method to isogeometric boundary element method for {Laplace} equation in two dimensions.
\newblock {\em Eng. Anal. Bound. Elem.} 36(12):1766--1775, 2012.

\bibitem{Weggler_2011aa}
L.~Weggler.
\newblock {\em High Order Boundary Element Methods}.
\newblock Dissertation, Universit\"at des Saarlandes, Saarbr\"ucken, 2011.
  
\bibitem{Weggler_2014aa}
L.~Weggler.
\newblock Generalization of tangential trace spaces of $\bb H(\bcurl,\Omega)$ for curvilinear Lipschitz polyhedral domains $\Omega$.
\newblock {\em Math. Meth. Appl. Sci.}, 37:1847-1852, 2014.

\bibitem{Zaglmayr_2006aa}
S.~Zaglmayr.
\newblock High Order Finite Element Methods
for Electromagnetic Field Computation. \emph{Dissertation}. Johannes Kepler Universität Linz, 2006.

\bibitem{Xu_2002aa}
J.~Xu and L.~Zikatanov.
\newblock Some observations on Babuška and Brezzi theories.
\newblock {\em Numer. Math.}, 94(1):195--202, 2002.

\end{thebibliography}
\footnotesize

\small

\appendix
\section{Inverse Estimate}
For the the conclusion of Theorem~\ref{thm:FMMeverything} we require the following
inverse estimate for $\bb\S_{\bb p,\bb\Xi}^1(\Gamma)$ for discretizations which
feature patch-wise continuous spline spaces. Note that inverse estimates for
$ \S_{\bb p,\bb\Xi}^0(\Gamma)$ and $ \S_{\bb p,\bb\Xi}^2(\Gamma)$ can 
directly be obtained from standard approximation theory for piecewise
polynomials of arbitrary degree.
\begin{lemma}[Inverse Estimate for $\bb\S_{\bb p,\bb\Xi}^1(\Gamma)$]\label{lem:inverseestimate}
Let $\bb\S_{\bb p,\bb\Xi}^1(\Gamma)$ be patchwise continuous. Then it holds
\[
\|\bb v_h\|_{\bb H^0(\div_\Gamma,\Gamma)}\lesssim h^{-1/2}\|\bb v_h\|_{\chispace}\quad\text{for all}~\bb v_h\in\bb\S_{\bb p,\bb\Xi}^1(\Gamma).
\]
\end{lemma}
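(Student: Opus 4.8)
The plan is to split the graph norm of $\chispace$ and handle its two constituents separately. Since $\norm{\bb v_h}_{\bb H^0(\div_\Gamma,\Gamma)}^2=\norm{\bb v_h}_{\bb L^2(\Gamma)}^2+\norm{\div_\Gamma\bb v_h}_{L^2(\Gamma)}^2$, while $\norm{\bb v_h}_{\chispace}^2$ dominates each of $\norm{\bb v_h}_{\bb H^{-1/2}_\times(\Gamma)}^2$ and $\norm{\div_\Gamma\bb v_h}_{H^{-1/2}(\Gamma)}^2$, it suffices to establish the two scalarised inverse estimates
\begin{align*}
\norm{\bb v_h}_{\bb L^2(\Gamma)}&\lesssim h^{-1/2}\norm{\bb v_h}_{\bb H^{-1/2}_\times(\Gamma)},&
\norm{\div_\Gamma\bb v_h}_{L^2(\Gamma)}&\lesssim h^{-1/2}\norm{\div_\Gamma\bb v_h}_{H^{-1/2}(\Gamma)},
\end{align*}
and to add the squares. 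The second is the easy half: $\div_\Gamma\bb v_h\in\S^2_{\bb p,\bb\Xi}(\Gamma)$ is a scalar piecewise polynomial, so this is precisely an instance of the scalar inverse estimate for $\S^2_{\bb p,\bb\Xi}(\Gamma)$ which, as recorded at the start of this appendix, follows from standard approximation theory for piecewise polynomials; I would simply invoke it.

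For the vector-valued estimate I would argue by duality, exploiting that patchwise continuity gives $\bb\S^1_{\bb p,\bb\Xi}(\Gamma)\subset\bb H^{1/2}_\times(\Gamma)$. Let $P_h\colon\bb L^2(\Gamma)\to\bb\S^1_{\bb p,\bb\Xi}(\Gamma)$ be the $\bb L^2(\Gamma)$-orthogonal projection. For tangential fields the twisted pairing reproduces the $\bb L^2$-inner product, $(\bb a,\bb b)_{\bb L^2(\Gamma)}=\langle\bb a,\bb b\times\bb n\rangle_\times$, so that for any $\bb g\in\bb L^2(\Gamma)$ Galerkin orthogonality yields
\begin{align*}
(\bb v_h,\bb g)_{\bb L^2(\Gamma)}&=(\bb v_h,P_h\bb g)_{\bb L^2(\Gamma)}=\langle\bb v_h,(P_h\bb g)\times\bb n\rangle_\times\\
&\leq\norm{\bb v_h}_{\bb H^{-1/2}_\times(\Gamma)}\norm{(P_h\bb g)\times\bb n}_{\bb H^{1/2}_\times(\Gamma)}.
\end{align*}
Using the positive-order inverse estimate stated below together with the $\bb L^2$-stability of $P_h$ and the fact that $\bb\psi\mapsto\bb\psi\times\bb n$ is a fixed, $h$-independent, pointwise $\bb L^2$-isometric smooth multiplier (hence preserves both the discrete space and its inverse estimate), one bounds $\norm{(P_h\bb g)\times\bb n}_{\bb H^{1/2}_\times(\Gamma)}\lesssim h^{-1/2}\norm{P_h\bb g}_{\bb L^2(\Gamma)}\lesssim h^{-1/2}\norm{\bb g}_{\bb L^2(\Gamma)}$. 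Taking the supremum over $\bb g$ delivers the first inverse estimate.

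It remains to supply the positive-order inverse estimate $\norm{\bb w_h}_{\bb H^{1/2}_\times(\Gamma)}\lesssim h^{-1/2}\norm{\bb w_h}_{\bb L^2(\Gamma)}$ for $\bb w_h\in\bb\S^1_{\bb p,\bb\Xi}(\Gamma)$, and this is where the real work lies. The plan is to pull back patchwise through the fixed, $h$-independent isomorphisms $\iota_1(\bb F_j)$, which reduce the claim to a tensor-product spline space on $\square$ over a locally quasi-uniform mesh; there the integer-order estimate $\norm{\hat{\bb w}_h}_{\bb H^1(\square)}\lesssim h^{-1}\norm{\hat{\bb w}_h}_{\bb L^2(\square)}$ is classical, cf.~\cite{Beirao-da-Veiga_2014aa}, and one then interpolates against the trivial $\bb L^2$ bound via $\norm{\cdot}_{\bb H^{1/2}}\lesssim\norm{\cdot}_{\bb L^2}^{1/2}\norm{\cdot}_{\bb H^1}^{1/2}$. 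The main obstacle is that $\bb w_h$ is only \emph{patchwise} $\bb H^1$ whereas the target norm $\norm{\cdot}_{\bb H^{1/2}_\times(\Gamma)}$ is global and nonlocal, so the interpolation inequality cannot be localised naively; the point is that the global $H^{1/2}$-conformity guaranteed by patchwise continuity is exactly what renders the global interpolation-space argument admissible, and I would make this rigorous following the standard techniques for fractional-order inverse estimates on conforming (spline) finite element spaces. Finally, the residual case $\bb\S^1_{\bb p,\bb\Xi}(\Gamma)\not\subset\bb H^{1/2}_\times(\Gamma)$, occurring only for maximal knot repetition, reduces patchwise to inverse estimates for higher-order Raviart--Thomas elements on quadrilaterals, exactly as in the proof of Theorem~\ref{lem::disc_inf-sup}, cf.~\cite{Buffa_2003aa,Zaglmayr_2006aa}.
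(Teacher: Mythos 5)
Your overall architecture coincides with the paper's: split the two graph norms, dispatch the divergence part with the scalar inverse estimate for $\S^2_{\bb p,\bb\Xi}(\Gamma)$, and reduce the $\bb L^2$-part by duality with the $\bb L^2(\Gamma)$-orthogonal projection to the positive-order estimate $\|\bb w_h\|_{\bb H^{1/2}_\times(\Gamma)}\lesssim h^{-1/2}\|\bb w_h\|_{\bb L^2(\Gamma)}$. Up to that reduction your argument is sound, and your explicit handling of the twisted pairing via $\bb\psi\mapsto\bb\psi\times\bb n$ is, if anything, more careful than the paper's.

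The gap is in the positive-order estimate itself, which is where essentially all of the content of the paper's appendix lies. Your proposed route --- patchwise pull-back, integer-order inverse estimate on $\square$, then the interpolation inequality $\|\cdot\|_{H^{1/2}}\lesssim\|\cdot\|_{L^2}^{1/2}\|\cdot\|_{H^1}^{1/2}$ applied globally --- does not go through as stated: a field in $\bb\S^1_{\bb p,\bb\Xi}(\Gamma)$ is only $\bb H(\div_\Gamma)$-conforming, so merely its \emph{normal} component is continuous across patch interfaces; the tangential component jumps, the field is not globally in $\bb H^1$, and the global $L^2$--$H^1$ interpolation inequality is therefore not available. You correctly name this obstacle, but then dismiss it by appeal to ``global $H^{1/2}$-conformity,'' which is exactly what fails componentwise ($\bb H^{1/2}_\times(\Gamma)$ is not the componentwise $\bb H^{1/2}(\Gamma)$). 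The paper's resolution is structural: it invokes an equivalent norm on $\bb H^{1/2}_\times(\Gamma)$ consisting of the patchwise $\bb H^{1/2}(\Gamma_j)$-norms plus interface functionals $\mathcal{N}_{ij}$ that involve \emph{only} the scaled normal components $\bb v_h\cdot\tilde{\bb n}_{ij}$. The patchwise terms succumb to standard local inverse estimates; the interface terms are treated in a separate auxiliary lemma, where one observes that $\bb v_h\cdot\tilde{\bb n}_{ij}$ \emph{is} continuous across the shared edge, hence lies in $H^1(\Gamma_i\cup\Gamma_j)$, so that the $L^2$--$H^1$ interpolation can legitimately be performed for this scalar quantity on the two-patch union, giving $\mathcal{N}_{ij}(\bb v_h)\lesssim h^{-1}\|\bb v_h\|^2_{\bb L^2(\Gamma_i\cup\Gamma_j)}$. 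Without this decomposition of the nonlocal norm into pieces that see only the continuous normal trace, the sketch cannot be completed along the lines you indicate.
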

For the proof we will require some additional knowledge of the space
$\bb H^{-1/2}_\times(\Gamma)$ and an auxiliary lemma. Therefore, we
recall that an equivalent norm on its dual $\bb H_{\times}^{1/2}(\Gamma)$
is given by
\begin{align}\label{eq:H12timesnorm}
\|\bb v_h\|_{\bb H_{\times}^{1/2}(\Gamma)}^2=\sum_{j\leq N}\|\bb v_h\|_{\bb H^{1/2}(\Gamma_j)}^2+
\sum_{j\leq N}\sum_{i\in\mathcal{I}_j}\mathcal{N}_{ij}(\bb v_h),\tag{A}
\end{align}
see also \cite{Buffa_2003ab,Weggler_2014aa}.
For all $j\leq N$ we define $\mathcal{I}_j$ in \eqref{eq:H12timesnorm} as
the set of the indices of the patches sharing an edge with $\Gamma_j$.
Moreover, we define for all $j\leq N$, $i\in\mathcal{I}_j$, the quantity
\[
\mathcal{N}_{ij}(\bb v_h)
=
\int_{\Gamma_i}\int_{\Gamma_j}
\frac{|(\bb v_h\cdot\tilde{\bb n}_{ij})(\bb x)-(\bb v_h\cdot\tilde{\bb n}_{ji})(\bb y)|^2}{\|\bb x-\bb y\|_{\mathbb{R}^3}^3}
\dd\sigma_{\bb y}\dd\sigma_{\bb x},
\]
where, assuming that $\partial_1\bb F_i=\partial_1\bb F_j$ is the derivative of the
parametrization of the common edge $\overline{\Gamma}_i\cap\overline{\Gamma}_j$, we
define $\tilde{\bb n}_{ij}$ and $\tilde{\bb n}_{ji}$ such that
\begin{align*}
\tilde{\bb n}_{ij}\cdot\iota^{-1}_1(\partial_1\bb F_i)={}&\big\|\iota_1^{-1}(\partial_1\bb F_i)\times\iota_1^{-1}(\partial_2\bb F_i)\big\|_{\mathbb{R}^3}^{-1}&&\text{on}~\Gamma_{i},\\
\tilde{\bb n}_{ji}\cdot\iota^{-1}_1(\partial_1\bb F_j)={}&\big\|\iota_1^{-1}(\partial_1\bb F_j)\times\iota_1^{-1}(\partial_2\bb F_j)\big\|_{\mathbb{R}^3}^{-1}&&\text{on}~\Gamma_{j}.
\end{align*}
In short, this means that $\tilde{\bb n}_{ij}$ and $\tilde{\bb n}_{ji}$ are the outward
pointing normals on the common patch boundary, scaled by a constant.

The following auxilliary lemma provides an inverse estimate for
$\mathcal{N}_{ij}(\bb v_h)$.
\begin{lemma}\label{lem:app:help}
Let $\bb\S_{\bb p,\bb\Xi}^1(\Gamma)$ be patchwise continuous. Then it holds
\[
\mathcal{N}_{ij}(\bb v_h)\lesssim h^{-1}\|\bb v_h\|_{\bb L^2(\Gamma_i\cup\Gamma_j)}^2,\quad\text{for all}~\bb v_h\in\bb\S_{\bb p,\bb\Xi}^1(\Gamma).
\]
\end{lemma}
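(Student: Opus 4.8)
The plan is to transfer $\mathcal N_{ij}$ to the reference domain, to recognise the resulting integral as (a piece of) a Gagliardo $H^{1/2}$-seminorm of a single, globally continuous piecewise polynomial, and then to conclude by a standard inverse estimate obtained through interpolation.

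First I would pull both patches back through the parametrisations $\bb F_i,\bb F_j$. Since these are smooth, non-singular, and coincide on the shared edge so that $\Gamma_i\cup\Gamma_j$ is a Lipschitz surface, one has the two-sided estimate $\norm{\bb F_\bullet(\hat{\bb s})-\bb F_\bullet(\hat{\bb t})}_{\R^3}\simeq\norm{\hat{\bb s}-\hat{\bb t}}$ near the interface, together with uniformly bounded surface measures $\tau$. Hence, after absorbing the bounded Jacobians and the Piola normalisation into $\simeq$, the quantity $\mathcal N_{ij}(\bb v_h)$ is comparable to a double integral over the two reference squares $\square$, glued along the reference edge corresponding to the shared surface edge into a domain $\hat\omega$, with kernel $\norm{\hat{\bb s}-\hat{\bb t}}^{-3}$ and integrand $|g_h(\hat{\bb s})-g_h(\hat{\bb t})|^2$. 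Here, by construction of $\tilde{\bb n}_{ij}$ and $\tilde{\bb n}_{ji}$, the scalar $g_h$ is the edge-normal component of the pulled-back field $\iota_1(\bb F_\bullet)(\bb v_h|_{\Gamma_\bullet})\in\bb\S^1_{\bb p,\bb\Xi}(\square)$, a piecewise polynomial.

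The decisive structural point is continuity across the interface. Because $\bb v_h\in\bb\S^1_{\bb p,\bb\Xi}(\Gamma)\subset\chispace$ is $\div_\Gamma$-conforming, its edge-normal component is continuous across the shared edge, and the scaling built into $\tilde{\bb n}_{ij},\tilde{\bb n}_{ji}$ is exactly what makes $\bb v_h\cdot\tilde{\bb n}_{ij}$ and $\bb v_h\cdot\tilde{\bb n}_{ji}$ agree there as traces. Together with the hypothesis that $\bb\S^1_{\bb p,\bb\Xi}(\Gamma)$ be patchwise continuous, so that $g_h$ is continuous across the interior knot lines as well, the glued function $g_h$ is a globally continuous piecewise polynomial on $\hat\omega$, whence $g_h\in H^1(\hat\omega)\subset H^{1/2}(\hat\omega)$. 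This finiteness is essential: without the trace matching, the cross-integral would diverge logarithmically at the shared edge. With $g_h$ glued, the pulled-back $\mathcal N_{ij}$ is one of the four pieces of $|g_h|_{H^{1/2}(\hat\omega)}^2=\int_{\hat\omega}\int_{\hat\omega}\norm{\hat{\bb s}-\hat{\bb t}}^{-3}|g_h(\hat{\bb s})-g_h(\hat{\bb t})|^2\dd\hat{\bb t}\dd\hat{\bb s}$, so that $\mathcal N_{ij}(\bb v_h)\lesssim|g_h|_{H^{1/2}(\hat\omega)}^2$. It then remains to invoke the inverse estimate $|g_h|_{H^{1/2}(\hat\omega)}\lesssim h^{-1/2}\norm{g_h}_{L^2(\hat\omega)}$ for continuous piecewise polynomials on the locally quasi-uniform mesh (with constant $\theta$) of size $h$. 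I would obtain this by interpolation: the trivial $s=0$ bound and the element-wise $H^1$ inverse estimate $\norm{g_h}_{H^1(\hat\omega)}\lesssim h^{-1}\norm{g_h}_{L^2(\hat\omega)}$ give, via $H^{1/2}=[L^2,H^1]_{1/2}$ and $\norm{g_h}_{H^{1/2}}\lesssim\norm{g_h}_{L^2}^{1/2}\norm{g_h}_{H^1}^{1/2}$, the exponent $-1/2$. Transforming the $L^2$-norm back to the surface and using $\norm{g_h}_{L^2(\hat\omega)}\lesssim\norm{\bb v_h}_{\bb L^2(\Gamma_i\cup\Gamma_j)}$ yields the claim after squaring.

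The main obstacle is the nonlocality of the $H^{1/2}$-seminorm: a purely element-by-element scaling argument does not close, because the Gagliardo kernel couples distant elements, which forces the interpolation-space route above (or an equivalent dyadic decomposition). The second delicate point, logically prior to any inverse estimate, is to verify that the patchwise normal components really glue into an $H^{1/2}$ (indeed $H^1$) function; this is precisely where $\div_\Gamma$-conformity, the scaling of $\tilde{\bb n}_{ij},\tilde{\bb n}_{ji}$, and the patchwise-continuity hypothesis must be used simultaneously.
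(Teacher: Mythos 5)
Your proof follows essentially the same route as the paper's: bound $\mathcal{N}_{ij}(\bb v_h)$ by the $H^{1/2}$-Gagliardo seminorm of the scaled normal component on the patch pair and then obtain the $h^{-1/2}$ inverse estimate by interpolating between the trivial $L^2$ bound and a patchwise $H^1$ inverse estimate (the paper runs the interpolation through the $L^2$-orthogonal projection as an operator rather than via the multiplicative inequality $\|g\|_{H^{1/2}}\lesssim\|g\|_{L^2}^{1/2}\|g\|_{H^1}^{1/2}$, which is a cosmetic difference, and your explicit verification that the two normal traces glue into a single $H^1$ function across the shared edge makes precise a point the paper leaves implicit). The only slight imprecision is your claim that $g_h$ is a piecewise polynomial: because $\tilde{\bb n}_{ij}$ carries a smooth, position-dependent normalization, $g_h$ is a piecewise polynomial multiplied by a smooth factor, and --- exactly as the paper notes when invoking the boundedness of the derivatives of $\hat{\tilde{\bb n}}_{ij}$ --- the inverse estimate survives only because that factor and its derivatives are uniformly bounded.
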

\begin{proof}
For ease of notation we introduce the notation $\hat{\bb f}\coloneqq \iota_1(\bb F_j)(\bb f)$ for every $j\leq N$.
Due to the assumptions on the parametrization, it holds
\begin{align*}
|\bb v_h\cdot\tilde{\bb n}_{ij}|_{H^1(\Gamma_i)}
\sim
|(\bb v_h\cdot\tilde{\bb n}_{ij})\circ\bb F_i|_{H^1(\square)}
\leq
\big|\big(\hat{\bb v}_h\big)_1\big(\hat{\tilde{\bb n}}_{ij}\big)_1\big|_{H^1(\square)}
+
\big|\big(\hat{\bb v}_h\big)_2\big(\hat{\tilde{\bb n}}_{ij}\big)_2\big|_{H^1(\square)}.
\end{align*}
In particular, the derivatives of
$\hat{\tilde{\bb n}}_{ij}$ are bounded, such that we can use
standard inverse estimates for polynomials to estimate
\begin{align*}
|\bb v_h\cdot\tilde{\bb n}_{ij}|_{H^1(\Gamma_i)}
\lesssim{}&
\big|\big(\hat{\bb v}_h\big)_1\big|_{H^1(\square)}
+
\big|\big(\hat{\bb v}_h\big)_2\big|_{H^1(\square)}\\
\lesssim{}&
h^{-1}
\Big(\big\|\big(\hat{\bb v}_h\big)_1\big\|_{L^2(\square)}
+
\big\|\big(\hat{\bb v}_h\big)_2\big\|_{L^2(\square)}\Big)\\
\lesssim{}&
h^{-1}\big\|\hat{\bb v}_h\big\|_{\bb L^2(\square)}\\
\sim{}&
h^{-1}\big\|{\bb v}_h\big\|_{\bb L^2(\Gamma_i)}.
\end{align*}
This yields
\begin{align}\label{eq:app:semiH1est}
\begin{aligned}
|\bb v_h\cdot\tilde{\bb n}_{ij}|_{H^1(\Gamma_i\cup\Gamma_j)}
={}&
\sqrt{
|\bb v_h\cdot\tilde{\bb n}_{ij}|_{H^1(\Gamma_i)}^2
+
|\bb v_h\cdot\tilde{\bb n}_{ij}|_{H^1(\Gamma_j)}^2}\\
\lesssim{}&
h^{-1}\sqrt{\|\bb v_h\|_{\bb L^2(\Gamma_i)}^2
+
\|\bb v_h\|_{\bb L^2(\Gamma_j)}^2}
=
h^{-1}\|\bb v_h\|_{\bb L^2(\Gamma_i\cup\Gamma_j)}.
\end{aligned}\tag{B}
\end{align}
By exploiting the assumptions on the geometry mappings and 
$\bb v_h\cdot\tilde{\bb n}_{ij}\leq\|\bb v_h\|_{\mathbb{R}^3}\|\tilde{\bb
n}_{ij}\|_{\mathbb{R}^3}$, we moreover conclude
\begin{align}\label{eq:app:L2est}
\|\bb v_h\cdot\tilde{\bb n}_{ij}\|_{L^2(\Gamma_i\cup\Gamma_j)}
\lesssim
\|\bb v_h\|_{\bb L^2(\Gamma_i\cup\Gamma_j)}.\tag{C}
\end{align}
Adding \eqref{eq:app:L2est} to \eqref{eq:app:semiH1est} and exploiting
$1\lesssim h^{-1}$ yields
\begin{align}\label{eq:app:H1est}
\|\bb v_h\cdot\tilde{\bb n}_{ij}\|_{H^1(\Gamma_i\cup\Gamma_j)}
\lesssim
h^{-1}\|\bb v_h\|_{\bb L^2(\Gamma_i\cup\Gamma_j)}.\tag{D}
\end{align}
We conclude the proof by remarking that the $\bb L^2(\Gamma_i\cup\Gamma_j)$-orthogonal projection $\bb\Pi _h$ onto
$\S_{\bb p,\bb\Xi}^1(\Gamma_i\cup\Gamma_j)$ acts as an identity on $\S_{\bb p,\bb\Xi}^1(\Gamma_i\cup\Gamma_j)$.
Thus, setting $\bb v_h=\bb\Pi_h\bb v$, $\bb v\in\tilde{\bb H}^1(\div_{\Gamma},\Gamma_i\cup\Gamma_j)$,
interpolation between \eqref{eq:app:L2est} and \eqref{eq:app:H1est} together with
\[
\mathcal{N}_{ij}(\bb v_h)
\leq
|\bb v_h\cdot\tilde{\bb n}_{ij}|_{H^{1/2}(\Gamma_i\cup\Gamma_j)}^2
\leq
\|\bb v_h\cdot\tilde{\bb n}_{ij}\|_{H^{1/2}(\Gamma_i\cup\Gamma_j)}^2
\]
yields the assertion.
\end{proof}
We are now in position to prove the required inverse estimate.
\begin{proof}[Proof of Lemma~\ref{lem:inverseestimate}]
We recall that the involved norms are given by
\begin{align*}
\|\bb v_h\|_{\bb H^0(\div_\Gamma,\Gamma)}^2
={}&
\|\bb v_h\|_{\bb L^2(\Gamma)}^2+\|\div_{\Gamma}\bb v_h\|_{L^2(\Gamma)}^2,\\
\|\bb v_h\|_{\bb H^{-1/2}_{\times}(\div_\Gamma,\Gamma)}^2
={}&
\|\bb v_h\|_{\bb H^{-1/2}_{\times}(\Gamma)}^2+\|\div_{\Gamma}\bb v_h\|_{H^{-1/2}(\Gamma)}^2.
\end{align*}
Since it holds $\div_{\Gamma}\bb v_h\in\S_{\bb p,\bb\Xi}^2(\Gamma)$ for all
$\bb v_h\in\bb\S_{\bb p,\bb\Xi}^1(\Gamma)$, and therefore
\[
\|\div_{\Gamma}\bb v_h\|_{L^2(\Gamma)}\lesssim h^{-1/2}\|\div_{\Gamma}\bb v_h\|_{H^{-1/2}(\Gamma)},
\]
it remains to deal with $\|\bb v_h\|_{\bb L^2(\Gamma)}$, for which we will use
a duality argument.

Therefore, we remember that an equivalent norm on $\bb H^{1/2}_\times(\Gamma)$
is given by \eqref{eq:H12timesnorm}.
Standard inverse estimates for piecewise polynomials in each component yield
\[
\sum_{j\leq N}\|\bb v_h\|_{\bb H^{1/2}(\Gamma_j)}^2
\lesssim
h^{-1}\sum_{j\leq N}\|\bb v_h\|_{\bb L^2(\Gamma_j)}^2,
\]
whereas Lemma~\ref{lem:app:help} allows to estimate
\[
\sum_{j\leq N}\sum_{i\in\mathcal{I}_j}\mathcal{N}_{ij}(\bb v_h)
\lesssim
h^{-1}\sum_{j\leq N}\sum_{i\in\mathcal{I}_j}
\|\bb v_h\|_{\bb L^2(\Gamma_i\cup\Gamma_j)}^2
\lesssim
h^{-1}\|\bb v_h\|_{\bb L^2(\Gamma)}^2.
\]
This yields
\begin{align}\label{eq:app:invhelp}
\|\bb v_h\|_{\bb H^{1/2}_\times(\Gamma)}\lesssim h^{-1/2}\|\bb v_h\|_{\bb L^2(\Gamma)},\quad\text{for all}~\bb v_h\in\bb\S_{\bb p,\bb \Xi}^1(\Gamma).\tag{E}
\end{align}

Using the $\bb L^2(\Gamma)$-orthogonal projection $\bb\Pi _h$ onto $\bb\S_{\bb p,\bb\Xi}^1(\Gamma)$,
we can now use a duality argument to arrive at
\begin{align*}
\|\bb v_h\|_{\bb L^2(\Gamma)}
=&{}
\sup_{\|\bb w\|_{\bb L^2(\Gamma)=1}}\langle\bb v_h,\bb w\rangle_{\bb L^2(\Gamma)}\\
=&{}
\sup_{\|\bb w\|_{\bb L^2(\Gamma)=1}}\big\langle\bb v_h,\bb\Pi_h\bb w\big\rangle_{\times}\\
=&{}
\|\bb v_h\|_{\bb H_{\times}^{-1/2}(\Gamma)}
\sup_{\|\bb w\|_{\bb L^2(\Gamma)=1}}\big\|\bb\Pi _h\bb w\big\|_{\bb H_{\times}^{1/2}(\Gamma)}
\lesssim
h^{-1/2}\|\bb v_h\|_{\bb H_{\times}^{-1/2}(\Gamma)}.
\end{align*}
In the last step we used the previously derived inverse estimate \eqref{eq:app:invhelp} and the stability of the $\bb L^2(\Gamma)$-projection. This yields the assertion.
\end{proof}

\end{document}